\author{Matthieu Calvez\footnote{Universidad de Valpara\'iso, IMUV, Gran Breta\~na 1111, Valpara\'iso 2340000, Chile}, Bruno A. Cisneros de la Cruz\footnote{CONACYT - Instituto de Matem\'aticas de la UNAM, Oaxaca, Le\'on No.2, Oaxaca de Ju\'arez, Mexico}}
\title{Curve graphs for Artin-Tits groups of type $B$, $\widetilde A$ and $\widetilde C$ are hyperbolic}
\newtheorem{definition}{Definition}[section]
\newtheorem{theorem}[definition]{Theorem}
\newtheorem{corollary}[definition]{Corollary}
\newtheorem{lemma}[definition]{Lemma}
\newtheorem{remark}[definition]{Remark}
\newtheorem{proposition}[definition]{Proposition}
\newtheorem{problem}[definition]{Problem}
\newtheorem{Proposition-Definition}{Proposition-Definition}
\newtheorem*{theorem*}{Theorem}
\numberwithin{equation}{section}
\begin{document}

\maketitle
\begin{abstract}
The \emph{graph of irreducible parabolic subgroups} is a combinatorial object
associated to an Artin-Tits group $A$ defined so as to coincide with the curve graph of the $(n+1)$-times punctured disk when $A$ is Artin's braid group on $(n+1)$ strands. In this case, it is a hyperbolic graph, by the celebrated Masur-Minsky's theorem. 
Hyperbolicity for more general Artin-Tits groups is an important open question. In this paper, we give a partial affirmative answer. 

For $n\geqslant 3$, we prove that the graph of irreducible parabolic subgroups associated to the Artin-Tits group of spherical type $B_n$ is also isomorphic to the curve graph of the $(n+1)$-times punctured disk, hence it is hyperbolic.

For $n\geqslant 2$, 
we show that the graphs of irreducible parabolic subgroups associated to the Artin-Tits groups of euclidean type $\widetilde A_n$ and $\widetilde C_n$ are isomorphic to some subgraphs of the curve graph of the $(n+2)$-times punctured disk which are not quasi-isometrically embedded. 
We prove nonetheless that these graphs are hyperbolic. 
\end{abstract}

\section{Introduction  and background}\label{S:Intro}

An \emph{Artin-Tits group} is a group defined by a presentation involving a finite set of generators~$S$ (the \emph{standard generators}) and where all the relations are as follows: every pair $(a,b)$ of standard generators satisfies at most one balanced relation of the form 
$$\Pi(a,b;m_{a,b})=\Pi(b,a;m_{a,b}),$$
with $m_{a,b}\geqslant 2$ and where for $k\geqslant 2$, $$\Pi(a,b;k)=\begin{cases} (ab)^{\frac{k}{2}}  & \text{if $k$ is even},\\
(ab)^{\frac{k-1}{2}}a & \text{if $k$ is odd.}
\end{cases}$$

This presentation can be encoded by a \emph{Coxeter graph} $\Gamma$. The vertices of $\Gamma$ are in bijection with the set~$S$. Two distinct vertices $a,b$ of $\Gamma$ are connected by a labeled edge if and only if either they satisfy no relation, in which case the label is $\infty$, or $m_{a,b}>2$, in which case the label
is $m_{a,b}$.
%labeled $m_{ab}$ if $m_{ab}>2$ and labeled by $\infty$ if~$a,b$ satisfy no relation. 
The Artin-Tits group defined by the Coxeter graph $\Gamma$ will be denoted~$A_{\Gamma}$. Because all the relations in the given presentation of~$A_{\Gamma}$ are balanced, there is a homomorphism $\epsilon_{\Gamma}: A_{\Gamma}\longrightarrow \mathbb Z$ assigning to each element of $A_{\Gamma}$ the exponent sum of any word on~$S$ representing it.

The group $A_{\Gamma}$ is said to be \emph{irreducible} if $\Gamma$ is connected and \emph{dihedral} if~$\Gamma$ has two vertices. The quotient by the normal subgroup generated by the squares of the elements in $S$ is a \emph{Coxeter group} denoted by $W_{\Gamma}$. 
The Artin-Tits group $A_{\Gamma}$ is said to be of \emph{spherical type} if $W_{\Gamma}$ is finite. 

Given a proper subset $\emptyset \neq X\subsetneq S$, the proper subgroup of $A_{\Gamma}$ generated by $X$ is called a \emph{standard parabolic subgroup} of~$A_{\Gamma}$; it is naturally isomorphic to the Artin-Tits group $A_{\Xi}$ defined by the subgraph $\Xi$ of $\Gamma$ induced by the vertices in~$X$ \cite{VanDerLek}. A subgroup $P$ of $A_{\Gamma}$ is called \emph{parabolic} if it is conjugate to a standard parabolic subgroup. 

The flagship example of an Artin-Tits group (of spherical type) is the braid group on $(n+1)$ strands --i.e. the Artin-Tits group defined by the graph $A_n$ shown in Figure \ref{Figure}(a). This group is isomorphic to the Mapping Class Group of an $(n+1)$-times punctured closed disk $\mathbb D_{n+1}$, that is the group of isotopy classes of orientation-preserving homeomorphisms of $\mathbb D_{n+1}$ which induce the identity on the boundary of $\mathbb D_{n+1}$.

The \emph{curve graph} $\mathcal{CG}(\mathbb D_{n+1})$ of the $(n+1)$-times punctured disk $\mathbb D_{n+1}$ is the graph whose vertices are the isotopy classes of \emph{essential} simple closed curves in $\mathbb D_{n+1}$ (closed curves without auto-intersection and enclosing at least 2 and at most $n$ punctures) and where two vertices are joined by an edge if the corresponding isotopy classes of curves are distinct and admit disjoint representatives. This is a connected graph whenever $n\geqslant 3$, which we will suppose henceforth. 
The graph~$\mathcal{CG}(\mathbb D_{n+1})$ is equipped with the combinatorial metric $d_{\mathbb D_{n+1}}$ defined by declaring each edge to have length one. There is a natural action of the Artin-Tits group $A_{A_n}$ on the set of isotopy classes of essential simple closed curves in $\mathbb D_{n+1}$; 
this action preserves adjacency in $\mathcal{CG}(\mathbb D_{n+1})$, hence~$A_{A_n}$ acts
 by isometries on the curve graph of~$\mathbb D_{n+1}$.

Following Masur-Minsky's celebrated theorem~\cite[Theorem 1.1]{MasurMinsky1}, $\mathcal{CG}(\mathbb D_{n+1})$ is a hyperbolic metric space. Furthermore, the Artin-Tits group $A_{A_n}$ (viewed as the mapping class group of $\mathbb D_{n+1}$) 
%fits in the more general framework of what is now called 
is a \emph{hierarchically hyperbolic space} \cite{Sisto, BHS1,BHS2}, with respect to the projections to the curve graphs of subsurfaces of the disk. A natural and challenging question is whether any irreducible Artin-Tits group~$A_{\Gamma}$ (not necessarily of spherical type) admits such a hierarchical structure. A first step forward is to define a hyperbolic space on which $A_{\Gamma}$ acts in the same way as the braid group acts on the curve graph of the punctured disk.

In \cite{CGGMW}, Cumplido, Gebhardt, Gonz\'alez-Meneses and Wiest explain a one-to-one correspondence between isotopy classes of simple closed curves in $\mathbb D_{n+1}$ and proper irreducible parabolic subgroups of $A_{A_n}$ which allows them to translate the definition of the curve graph $\mathcal{CG}(\mathbb D_{n+1})$ in purely algebraic terms. This definition can then be generalized to any Artin-Tits group as follows: 

%
%There is a natural candidate for this purpose, the so-called \emph{graph of irreducible parabolic subgroups} $\mathcal C_{parab}({\Gamma})$, which was recently introduced by Cumplido, Gebhardt, Gonz\'alez-Meneses and Wiest \cite[Definition 2.3]{CGGMW}. The definition can be stated as follows. 

\begin{definition}\cite{CGGMW}\cite[Definition 4.1]{MorrisWright} \label{D:CParab} Let $A_{\Gamma}$ be an Artin-Tits group. 
Two distinct proper irreducible parabolic subgroups $P$ and $Q$ are called \emph{adjacent} if one of the following conditions holds.
\begin{itemize}
\item $P\subset Q$ or $Q\subset P$,
\item $P\cap Q=\{1\}$ and $pq=qp$ for all $p\in P$ and $q\in Q$.
\end{itemize}
The \emph{graph of irreducible parabolic subgroups} of $A_{\Gamma}$ is the graph $\mathcal C_{parab}(\Gamma)$ whose vertices are the proper irreducible parabolic subgroups of $A_{\Gamma}$ and where two vertices 
are connected by an edge if and only if they correspond to adjacent parabolic subgroups. 
\end{definition}

The graph $\mathcal C_{parab}(\Gamma)$ is equipped with a metric, declaring each edge to have length one. We denote by $d_{\Gamma}$ the distance on $\mathcal C_{parab}(\Gamma)$. There is a natural simplicial action of $A_{\Gamma}$ on $\mathcal C_{parab}(\Gamma)$, by conjugation on parabolic subgroups. This action will be denoted on the right: given a proper irreducible parabolic subgroup $P$ of $A_{\Gamma}$ and $g\in A_{\Gamma}$, the parabolic subgroup $g^{-1}Pg$ will be denoted~$P^g$. Accordingly, we will always use the exponent notation for conjugacy in a group $G$: given $g,h\in G$, $h^g=g^{-1}hg$. 

Note that $\mathcal C_{parab}(\Gamma)$ is empty if $\Gamma$ consists of a single vertex ($A_{\Gamma}$ is cyclic). 
If $A_{\Gamma}$ is dihedral, then by \cite[Lemma 5.2, Theorem 5.3]{MorrisWright}, $\mathcal C_{parab}(\Gamma)$ has infinite diameter and is not connected, unless~$\Gamma$ consists of two vertices with no edge, in which case $A_{\Gamma}\cong\mathbb Z^2$ and $\mathcal C_{parab}(\Gamma)$ consists of two vertices and a single edge between them. Also, if $\Gamma$ is not connected,  $\mathcal C_{parab}(\Gamma)$ is easily shown to have diameter 2. In this paper, we will always assume that $\Gamma$ is connected and has at least 3 vertices.

%It is worth noting that the definition of adjacency may be greatly simplified when $A_{\Gamma}$ is of spherical type, or of FC-type -- see \cite{CGGMW} and \cite{MorrisWright}. 

Most of the known properties of the graph $\mathcal C_{parab}(\Gamma)$ are gathered in \cite{CalvezWiest} and \cite{MorrisWright}. For example, if~$A_{\Gamma}$ is irreducible and of spherical type ($\Gamma$ with at least 3 vertices), then~$\mathcal C_{parab}(\Gamma)$ is connected and has infinite diameter (\cite[Lemma 5.2]{MorrisWright} and \cite[Corollary 4.13]{CalvezWiest}). Finally, when $\Gamma=A_n$, $\mathcal C_{parab}(A_n)$ is isomorphic to the curve graph $\mathcal{CG}(\mathbb D_{n+1})$ and it is hyperbolic in virtue of Masur-Minsky's theorem. 
As noted in \cite{MorrisWright}, hyperbolicity of $\mathcal C_{parab}({\Gamma})$ is currently ``the major challenge for research moving forward in the area''. 
%So far, the only known affirmative result concerns Artin's braid groups (type~$A_n$). This can be synthetized as follows: 
%
%\begin{theorem}\cite{CGGMW}\label{T:Intro}
%%\begin{itemize}
%Let $n\geqslant 3$. The graph of irreducible parabolic subgroups $\mathcal C_{parab}({A_n})$ is \emph{isomorphic} to the curve graph of the $(n+1)$-times punctured disk $\mathbb D_{n+1}$. Therefore $\mathcal C_{parab}({A_n})$ is hyperbolic. 
%%\item[(2)] The complex of irreducible parabolic subgroups associated to a non-abelian dihedral Artin-Tits group is infinite and totally disconnected, hence it is not hyperbolic (the complex of irreducible parabolic subgroups of the abelian dihedral Artin-Tits group consists of two vertices connected by an edge, hence it is hyperbolic). 
%%\end{itemize}
%\end{theorem}

\begin{figure}
\center
\includegraphics{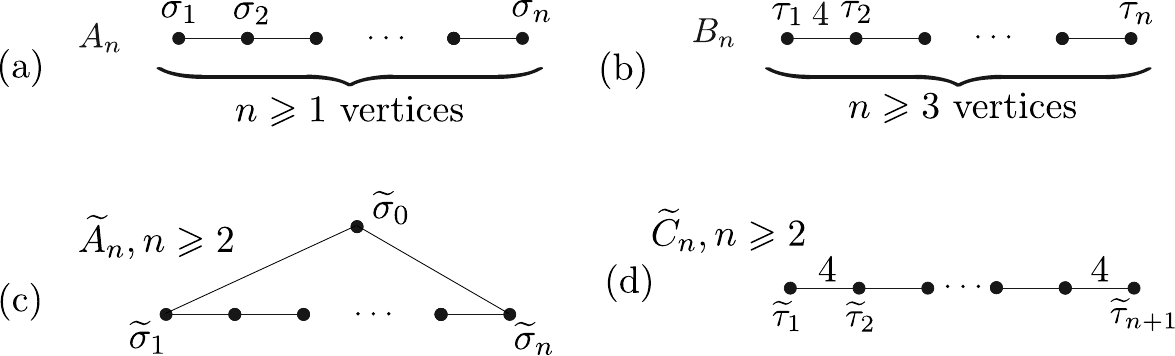}
\caption{The four infinite families of Coxeter graphs involved in the paper. As usual, we omit the label $m_{ab}$ whenever $m_{ab}=3$. The given labeling of the standard generators will be used throughout the paper. }\label{Figure}
\end{figure}

In this paper, we study the graphs of irreducible parabolic subgroups of three infinite families of Artin-Tits groups closely related to Artin's braid groups, whose defining Coxeter graphs are depicted in Figure~\ref{Figure} (b)-(c)-(d). The group $A_{B_n}$ is of spherical type while $A_{\widetilde A_n}$ and $A_{\widetilde C_n}$ are of \emph{euclidean} type. Whereas Artin-Tits groups of spherical type were long well-known (\cite{BS}, for instance), the structure of euclidean Artin-Tits groups was elucidated only recently by McCammond and his collaborators \cite{McCS}. 

Here is a brief summary of the results in the paper. 
Firstly, we recall that the Artin-Tits group~$A_{B_n}$ can be realized as a subgroup of index ($n+1$) of Artin's braid group on $(n+1)$ strands~$A_{A_n}$ \cite{Peifer}. We shall prove that this inclusion induces a graph isomorphism between the respective graphs of irreducible parabolic subgroups: 

\begin{theorem}\label{T:B}

For $n\geqslant 3$, the graph $\mathcal C_{parab}(B_n)$ is isomorphic to $\mathcal{CG}(\mathbb D_{n+1})$ --and to $\mathcal C_{parab}(A_n)$. Therefore, $\mathcal C_{parab}(B_n)$ is hyperbolic. 

\end{theorem}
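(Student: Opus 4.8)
The plan is to exploit the known index-$(n+1)$ inclusion $\iota\colon A_{B_n}\hookrightarrow A_{A_n}$ and to show that it induces a bijection between proper irreducible parabolic subgroups which preserves and reflects adjacency. First I would pin down, concretely, how $A_{B_n}$ sits inside $A_{A_n}$ following \cite{Peifer}: the braid group $A_{A_n}$ is the mapping class group of $\mathbb D_{n+1}$, and $A_{B_n}$ should correspond to the subgroup fixing a distinguished puncture (equivalently, braids on $(n+1)$ strands fixing one marked strand, giving the index $n+1$). This geometric picture is the right bridge, since $\mathcal C_{parab}(A_n)\cong\mathcal{CG}(\mathbb D_{n+1})$ is already available from \cite{CGGMW}, so it suffices to produce a bijection $\mathcal C_{parab}(B_n)\to\mathcal C_{parab}(A_n)$ compatible with inclusion-of-subgroups-and-commutation.

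The core of the argument is to understand the proper irreducible parabolic subgroups of $A_{B_n}$ and to match them with those of $A_{A_n}$. The natural map sends a parabolic $P\leqslant A_{B_n}$ to the parabolic of $A_{A_n}$ that it generates (or, in curve-graph language, to the isotopy class of curve it determines inside $\mathbb D_{n+1}$). I would first establish that the image $\iota(P)$ is again a proper irreducible parabolic subgroup of $A_{A_n}$, and conversely that every proper irreducible parabolic subgroup of $A_{A_n}$ arises this way from a \emph{unique} parabolic of $A_{B_n}$. Here the structure theory of parabolic subgroups --- standard parabolics of $A_{B_n}$ are Artin-Tits groups on connected subgraphs of $B_n$ (by \cite{VanDerLek}), and these subgraphs are of type $A$, so the standard parabolics are themselves braid groups --- lets me read off which curves in $\mathbb D_{n+1}$ can occur. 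The key point is a correspondence at the level of curves: a simple closed curve in $\mathbb D_{n+1}$ that is compatible with the $B_n$-structure (i.e.\ preserved up to isotopy by the marking of the special puncture) corresponds to a $B_n$-parabolic, and I expect this to account for \emph{all} essential curves, giving bijectivity of the induced vertex map.

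Once the vertex bijection $\Phi\colon \mathcal C_{parab}(B_n)\to\mathcal C_{parab}(A_n)$ is in place, I would check that it is a graph isomorphism by verifying adjacency in both directions against Definition~\ref{D:CParab}. For the forward direction, if $P\subset Q$ in $A_{B_n}$ then $\iota(P)\subset\iota(Q)$; and if $P\cap Q=\{1\}$ with $P,Q$ elementwise commuting, the same holds for their images, so $\Phi$ preserves edges. The delicate converse requires that adjacency of $\iota(P),\iota(Q)$ in $A_{A_n}$ forces adjacency of $P,Q$ in $A_{B_n}$: the commutation and triviality-of-intersection conditions are manifestly reflected, so the real content is ensuring that containment $\iota(P)\subset\iota(Q)$ is reflected by $P\subset Q$, i.e.\ that $\iota$ does not create spurious inclusions. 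This is essentially the injectivity/fullness of $\Phi$ on vertices carried to the level of edges. I expect the main obstacle to be precisely this bijectivity-of-vertices step, namely proving that \emph{every} proper irreducible parabolic of $A_{A_n}$ is the image of a $B_n$-parabolic and that distinct $B_n$-parabolics have distinct images; controlling this requires a clean description of how conjugation in $A_{B_n}$ versus $A_{A_n}$ acts on curves and of the intersection $\iota(A_{B_n})\cap P$ for $P$ a parabolic of $A_{A_n}$. With the vertex bijection secured, hyperbolicity is then immediate from the isomorphism $\mathcal C_{parab}(A_n)\cong\mathcal{CG}(\mathbb D_{n+1})$ and Masur--Minsky's theorem.
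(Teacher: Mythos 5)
Your route is in fact the paper's route: Peifer's identification of $A_{B_n}$ with the $1$-pure braid subgroup $\mathfrak P_1\leqslant A_{A_n}$, a vertex correspondence $B_I^g\mapsto A_I^{\eta_n(g)}$ (equivalently $\mathcal C_I^{\eta_n(g)}$), and adjacency checked in both directions; you also correctly locate the crux at bijectivity of the vertex map. But two genuine gaps remain. First, surjectivity is only ``expected'' in your sketch, and your guess that the relevant curves are those ``compatible with the $B_n$-structure'' is a red herring: \emph{every} essential curve occurs, and the actual content is Proposition \ref{P:Surj} of the paper, namely that every curve in $\mathbb D_{n+1}$ can be standardized by a $1$-pure braid. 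This needs a construction: take any standardizing braid $\zeta$, correct it into $\mathfrak P_1$ by the coset representative $a_{i_0}=\sigma_{i_0}\cdots\sigma_1$ with $i_0=\pi_\zeta(1)-1$, and in the case where the distinguished strand passes through the round curve (so $a_{i_0}$ destandardizes it), restandardize by the braid $\xi_I$ of Lemma \ref{L:ActionOfai}, whose first strand is straight (Remark \ref{R:Pure}) so that $1$-purity is preserved. Without an argument of this kind the bijection you need does not exist on vertices.

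Second, your adjacency analysis misdiagnoses where the difficulty lies, and the ``manifestly preserved/reflected'' claims do not stand as written, because the vertex map is \emph{not} $P\mapsto\eta_n(P)$: one has $\eta_n(B_I)=A_I\cap\mathfrak P_1$ (Lemma \ref{L:Comput}(i)), which is in general a proper subgroup of the parabolic $A_I$ --- e.g.\ $\eta_n(B_{\{1\}})=\langle\sigma_1^2\rangle\subsetneq\langle\sigma_1\rangle$ --- so the map sends $P$ to the parabolic \emph{closure} of $\eta_n(P)$, and neither elementwise commutation with trivial intersection nor its reflection passes formally between images and closures. The paper's mechanism is the central-element calculus, for which you offer no substitute: $\eta_n(z_{B_I})=z_{A_I}$ except $\eta_n(z_{B_{\{1\}}})=z_{A_{\{1\}}}^2$ (Lemma \ref{L:Comput}(ii)), combined with Paris' theorem (Proposition \ref{P:Center}: $Q=P^g$ iff $z_Q=z_P^g$), the CGGMW criterion (Proposition \ref{P:Adjacent}: adjacency iff the central elements commute), and --- precisely for the exceptional cyclic parabolic $B_{\{1\}}$ --- the Fenn--Rolfsen--Zhu theorem that $\sigma_1$ and $\sigma_1^2$ have the same centralizer in $A_{A_n}$. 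That last fact is what makes well-definedness and injectivity (your ``no spurious inclusions'') and edge reflection (Propositions \ref{P:WellDef} and \ref{P:AdjB}) go through; your sketch would fail exactly at this case. Once these two points are supplied, the assembly into the isomorphism $\mathfrak H_n$ and the appeal to Masur--Minsky are as you say.
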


We then focus on the Artin-Tits groups $A_{\widetilde A_n}$ and $A_{\widetilde C_n}$. Our arguments build on two classical embeddings between the Artin-Tits groups involved. First, there is an embedding of $A_{\widetilde A_n}$ in $A_{B_{n+1}}$ which yields a 
semi-direct product decomposition $A_{B_{n+1}}\cong A_{\widetilde A_n}\rtimes \mathbb Z$~\cite{Peifer}.
Second, $A_{\widetilde C_n}$ embedds as a subgroup of index $(n+1)(n+2)$ in Artin's braid group on $(n+2)$ strands $A_{A_{n+1}}$ --in a very similar way 
as $A_{B_n}$ embeds in $A_{A_n}$ \cite{Allcock}.

%This group can be embedded in the Artin-Tits group of type $B_{n+1}$ and $A_{B_{n+1}}$ can be decomposed as a semi-direct product $A_{\widetilde A_n}\rtimes \mathbb Z$~\cite{Peifer}. 
%
%From this we obtain the following

We prove: 
\begin{theorem}\label{T:TildeA}
Let $n\geqslant 2$. Let $Z=A$ or $C$. 
\begin{itemize}
\item[(i)] The graph $\mathcal C_{parab}(\widetilde Z_n)$ is connected.
\item[(ii)] The graph $\mathcal C_{parab}(\widetilde Z_n)$ is isomorphic to a subgraph of $\mathcal {CG}(\mathbb D_{n+2})$.
\item[(iii)]  The graph $\mathcal C_{parab}(\widetilde Z_{n})$ has infinite diameter.
\item[(iv)] The graph $\mathcal C_{parab}(\widetilde Z_{n})$ is hyperbolic.
\end{itemize}
\end{theorem}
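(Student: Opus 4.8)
My plan is to realise both graphs $\mathcal C_{parab}(\widetilde Z_n)$ as subgraphs of $\mathcal{CG}(\mathbb D_{n+2})$ and then argue intrinsically. For $Z=A$ I would use Peifer's decomposition $A_{B_{n+1}}\cong A_{\widetilde A_n}\rtimes\mathbb Z$ \cite{Peifer} together with Theorem~\ref{T:B} (applicable since $n+1\geqslant 3$), which identifies $\mathcal C_{parab}(B_{n+1})$ with $\mathcal{CG}(\mathbb D_{n+2})$; for $Z=C$ I would use Allcock's finite-index embedding $A_{\widetilde C_n}\hookrightarrow A_{A_{n+1}}$ \cite{Allcock} together with $\mathcal C_{parab}(A_{n+1})\cong\mathcal{CG}(\mathbb D_{n+2})$. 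In each case the first task is to check that the embedding $\iota$ carries proper irreducible parabolic subgroups of $A_{\widetilde Z_n}$ to proper irreducible parabolic subgroups of the ambient braid group, injectively; since the standard parabolics of $A_{\widetilde Z_n}$ are supported in embedded subsurfaces of $\mathbb D_{n+2}$ and conjugates of parabolics are parabolic, this should follow once the boundary cases are handled. The key observation is then that the two adjacency conditions of Definition~\ref{D:CParab} (one subgroup contained in the other, or commutation with trivial intersection) refer only to the abstract subgroups, so an injective $\iota$ automatically \emph{preserves and reflects} adjacency. Hence $\mathcal C_{parab}(\widetilde Z_n)$ is isomorphic to the full (induced) subgraph on the image, which is exactly part~(ii).

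To obtain a usable model I would make the image explicit. Writing $*$ (resp. $*_1,*_2$) for the distinguished puncture(s) of $\mathbb D_{n+2}$ fixed by $A_{\widetilde A_n}$ inside $A_{B_{n+1}}$ (resp. by $A_{\widetilde C_n}$ inside $A_{A_{n+1}}$), the parabolic attached to a curve $c$ lies in $A_{\widetilde Z_n}$ precisely when the braids supported in the subsurface bounded by $c$ respect the relevant constraint: for $Z=A$ the winding number around $*$ must vanish, which happens exactly when $c$ does not enclose $*$; for $Z=C$ the supporting subsurface must fix $*_1,*_2$ pointwise, i.e. $c$ encloses neither. Thus I expect $\mathcal X:=\mathcal C_{parab}(\widetilde Z_n)$ to be the full subgraph of $\mathcal{CG}(\mathbb D_{n+2})$ spanned by the curves enclosing none of the distinguished punctures. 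The delicate points here are the boundary cases (a curve enclosing only distinguished punctures, especially the curve around $\{*_1,*_2\}$ for type $\widetilde C_n$) and the surjectivity claim that every such curve really arises from a parabolic of $A_{\widetilde Z_n}$ up to conjugacy \emph{inside} $A_{\widetilde Z_n}$.

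For part~(i) I would prove connectivity of $\mathcal X$ by surgery: starting from a path in the connected graph $\mathcal{CG}(\mathbb D_{n+2})$ between two vertices of $\mathcal X$, I would push every intermediate curve that encloses a distinguished puncture off that puncture, checking that this keeps consecutive curves disjoint and inside $\mathcal X$. For part~(iii), the inclusion $\mathcal X\hookrightarrow\mathcal{CG}(\mathbb D_{n+2})$ is $1$-Lipschitz, because an edge of $\mathcal X$ is an edge of $\mathcal{CG}(\mathbb D_{n+2})$, so $d_{\widetilde Z_n}\geqslant d_{\mathbb D_{n+2}}$ on vertices of $\mathcal X$; under the isomorphism of~(ii) the conjugation action of $A_{\widetilde Z_n}$ agrees with the mapping-class action on curves, so taking any $g\in A_{\widetilde Z_n}$ that is pseudo-Anosov on $\mathbb D_{n+2}$ (for $Z=C$ these exist since $A_{\widetilde C_n}$ has finite index; for $Z=A$ one takes a pseudo-Anosov braid in $A_{B_{n+1}}$ of vanishing winding number) the orbit of a vertex of $\mathcal X$ has unbounded distance in $\mathcal{CG}(\mathbb D_{n+2})$ by Masur--Minsky \cite{MasurMinsky1}, hence unbounded distance in $\mathcal X$.

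Part~(iv) is the heart of the matter, and the step I expect to be genuinely hard. The same $1$-Lipschitz inclusion is \emph{not} a quasi-isometric embedding: pushing a distinguished puncture along a loop produces curves that stay close in $\mathcal{CG}(\mathbb D_{n+2})$ (through intermediate curves that enclose the puncture) but become arbitrarily far apart in $\mathcal X$, where those intermediate curves are forbidden. Consequently the hyperbolicity of $\mathcal{CG}(\mathbb D_{n+2})$ cannot be transported to $\mathcal X$ by any soft argument, and the hyperbolicity constant must be produced intrinsically. My plan is to verify Bowditch's guessing-geodesics criterion directly for $(\mathcal X,d_{\widetilde Z_n})$: between two vertices $a,b$ I would take the unicorn paths of Hensel--Przytycki--Webb and show, by surgery, that every unicorn curve built from $a$ and $b$ again encloses no distinguished puncture and that consecutive unicorns are $\mathcal X$-adjacent, so that these are genuine paths \emph{in} $\mathcal X$; then I would re-establish the two required conditions (bounded diameter for adjacent endpoints, and uniform slimness of the induced triangles) measured in the distorted metric $d_{\widetilde Z_n}$ rather than $d_{\mathbb D_{n+2}}$. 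This last estimate is precisely where the difficulty concentrates: because the inclusion is not quasi-isometric, the Hensel--Przytycki--Webb slimness bounds cannot be quoted for $\mathcal X$ and must be redone while keeping all curves off the distinguished punctures. An appealing alternative, if one is willing to invoke heavier machinery, is to recognise $\mathcal X$ as an instance of Vokes' hyperbolic graphs of multicurves for the surface $\mathbb D_{n+2}$ with the distinguished punctures marked; there the work shifts to checking connectivity (part~(i)) together with the witness conditions, in particular that there is a single maximal witness, so that one obtains genuine Gromov hyperbolicity and not merely a hierarchical structure with a product region.
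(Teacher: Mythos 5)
Your overall architecture is the paper's: embed $A_{\widetilde A_n}$ into $A_{B_{n+1}}$ via Peifer and apply Theorem~\ref{T:B}, embed $A_{\widetilde C_n}$ into $A_{A_{n+1}}$ via Allcock, identify the image as an induced subgraph of $\mathcal{CG}(\mathbb D_{n+2})$, note it is not quasi-isometrically embedded, and invoke Vokes' theorem; your type-$A$ image (curves not enclosing $*$) is exactly the paper's $\mathcal K_A$. But your identification of the image in type $C$ is wrong, and this is a genuine gap rather than a deferrable boundary case. The correct graph is $\mathcal K_C$, spanned by the curves that do not surround \emph{both} $p_1$ and $p_{n+2}$ (Proposition~\ref{P:CurveTildeC}), not by the curves surrounding \emph{neither}. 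Concretely, $\widetilde C_{\{1\}}=\langle\widetilde\tau_1\rangle$ is a vertex of $\mathcal C_{parab}(\widetilde C_n)$ and corresponds to the round curve $\mathcal C_{\{1\}}$, which surrounds $p_1$; your graph omits it. The error stems from your claim that the embedding carries parabolic subgroups to parabolic subgroups: $\lambda_n(\widetilde\tau_1)=\sigma_1^2$, so $\lambda_n(\widetilde C_{\{1\}})=\langle\sigma_1^2\rangle$ is only a finite-index subgroup of the parabolic $\langle\sigma_1\rangle$ of $A_{A_{n+1}}$ (and similarly at the other end). For the same reason your ``automatic'' preservation and reflection of adjacency via injectivity does not apply directly in type $C$ (the images are not vertices of $\mathcal C_{parab}(A_{n+1})$); the paper instead routes everything through central elements, using $\lambda_n(z_{\widetilde C_I})=z_{A_I}$ or $z_{A_I}^2$ (Lemma~\ref{L:Comput2}) together with the fact that $\sigma_1$ and $\sigma_1^2$ have the same centralizer \cite{Zhu} (Propositions~\ref{P:WellDefTildeC} and~\ref{P:AdjTildeC}). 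The misidentification is fatal downstream: for $n=2$ your proposed vertex set consists of the curves enclosing exactly the punctures $2,3$, and two distinct such curves are never disjoint (disjointness forces them to cobound an unpunctured annulus), so your graph is an infinite discrete set — it cannot be the connected graph $\mathcal C_{parab}(\widetilde C_2)$, and any verification of Vokes' hypotheses would be performed on the wrong object. (In type $A$ the analogous problem does not arise because $\theta_n$ lands in $A_{B_{n+1}}$, where Proposition~\ref{P:Map} really does show images of parabolics are parabolic, using the $\rho$-conjugation of Proposition~\ref{P:SemiDirect}.)

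Beyond this, part (iv) remains a plan in your write-up. The unicorn-path/Bowditch route is speculative precisely where you say it is, and the paper takes your ``heavier machinery'' alternative and executes it: after connectivity (proved algebraically as in \cite{MorrisWright}, not by your surgery sketch, which would need care since pushing a curve off a puncture can create intersections with its neighbors) and equivariance under the pure braid group, it classifies \emph{all} witnesses of $\mathcal K_A$ and $\mathcal K_C$ explicitly (Lemmas~\ref{L:Witness} and~\ref{L:WitnessC}) and checks that no two are disjoint and none is an annulus. Note that Vokes' hypothesis is ``no two disjoint witnesses,'' not your ``single maximal witness.'' Finally, the surjectivity you rightly flagged — that every curve in the claimed image is standardizable by an element of the embedded subgroup — is a real step, handled in the paper by the explicit standardizing braids $a_i$, $b_j$, $\xi_I$, $\xi'_I$ of Lemmas~\ref{L:ActionOfai} and~\ref{L:ActionOfbi} (Propositions~\ref{P:Surj} and~\ref{P:CurveTildeC}); your proposal contains no substitute for this computation.
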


We already point out that part (i) can be proven using the same argument as given in  \cite[Lemma~5.2]{MorrisWright}. The embedding promised by part (ii) is described in Corollaries \ref{C:MapTheta} for $\widetilde A_n$ and~\ref{C:MapLambda} for~$\widetilde C_n$. 
Once~(ii) is proven, (iii) follows easily after showing that the isomorphic image of $\mathcal C_{parab}(\widetilde Z_n)$  is dense in $\mathcal{CG}(\mathbb D_{n+2})$ 
(Propositions \ref{P:DenseTildeA} and~\ref{P:DenseTildeC}). 
Finally, we observe that $\mathcal K_Z$ is \emph{not} quasi-isometrically embedded in $\mathcal{CG}(\mathbb D_{n+2})$
 --see Proposition~\ref{P:NotQI} and Remark \ref{R:NotQI}. 
 Therefore, hyperbolicity of $\mathcal C_{parab}(\widetilde Z_n)$ is not immediate and to establish it, we rely on a theorem of Kate Vokes \cite{vokes} which allows to prove the hyperbolicity of some subgraphs of the curve graph.
Whatsoever, it is important to note that our proofs of the hyperbolicity of $\mathcal C_{parab}(B_n)$, $\mathcal C_{parab}(\widetilde A_n)$ and $\mathcal C_{parab}(\widetilde C_n)$ strongly depend on the hyperbolicity of curve graphs; it would be highly desirable to obtain independent algebraic proofs.
% of the hyperbolicity of the graphs of irreducible parabolic subgroups. 

As explained in \cite{CalvezWiest}, Theorem \ref{T:B} can be rephrased by saying that the union $X_{NP}(B_n)$ of the normalizers of the proper irreducible \emph{standard} parabolic subgroups of $A_{B_n}$ is a hyperbolic structure on $A_{B_n}$, answering partially Conjecture 4.7 in \cite{CalvezWiest}. Finally, we exhibit another hyperbolic structure on~$A_{B_n}$ and partially answer Conjectures 4.2 and 4.18 in \cite{CalvezWiest}: 

\begin{theorem}\label{T:XP}
Let $n\geqslant 3$. Let $X_P(B_n)$ be the union of the proper irreducible standard parabolic subgroups of~$A_{B_n}$ and the center of $A_{B_n}$. Then $X_P(B_n)$ is a hyperbolic structure on $A_{B_n}$ which is not equivalent to $X_{NP}(B_n)$. 
\end{theorem}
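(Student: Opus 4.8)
The plan is to verify the three requirements in turn: that $X_P(B_n)$ generates $A_{B_n}$ and thus defines a point of the poset of hyperbolic structures, that the associated Cayley graph is hyperbolic, and that this point is strictly finer than the one defined by $X_{NP}(B_n)$. Generation is immediate: for $n\geqslant 3$ every standard generator spans a rank-one standard parabolic subgroup, which is proper and irreducible, so $X_P(B_n)$ contains a generating set. Moreover every standard parabolic subgroup lies in its own normalizer and the center lies in every normalizer, whence $X_P(B_n)\subseteq X_{NP}(B_n)$; this already gives $d_{X_{NP}}\leqslant d_{X_P}$, so in the poset of hyperbolic structures the class of $X_{NP}(B_n)$ is dominated by that of $X_P(B_n)$. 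Consequently the two remaining tasks --- hyperbolicity and strictness of this domination --- are what require work.

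For hyperbolicity I would follow the same strategy as for the euclidean families: realize $\mathrm{Cay}(A_{B_n},X_P)$ as (quasi-isometric to) a subgraph of a curve graph and invoke Vokes' criterion \cite{vokes}, exactly as in the proof of Theorem~\ref{T:TildeA}(iv). Concretely, using Peifer's realization of $A_{B_n}$ as the group of braids of $\mathbb D_{n+1}$ fixing the distinguished puncture \cite{Peifer}, the cosets of proper standard parabolic subgroups correspond to isotopy classes of curves while the center corresponds to the global twist; coning off the center then selects a Vokes-type subgraph of $\mathcal{CG}(\mathbb D_{n+1})$ on which $A_{B_n}$ acts. The main point is to show that the orbit map from $(A_{B_n},d_{X_P})$ to this subgraph is a quasi-isometry, i.e. that the $X_P$-metric is undistorted in it; here one would compare $X_P$-length with subsurface-projection data, as in the hierarchically hyperbolic description of the braid group. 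This quasi-isometry verification is, in my view, the crux of the whole statement, since passing from $X_{NP}$ to the strictly finer $X_P$ is an ``un-coning'', and hyperbolicity is not preserved under un-coning in general; one genuinely needs the new combinatorial model rather than a soft comparison with Theorem~\ref{T:B}.

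To prove strictness of the domination --- equivalently, that the two structures are \emph{not} equivalent --- it suffices to produce a sequence $x_k\in X_{NP}(B_n)$ with $|x_k|_{X_P}\to\infty$, since non-equivalence of the two generating sets is precisely the assertion $\sup_{x\in X_{NP}}|x|_{X_P}=\infty$. I would fix a maximal proper standard parabolic subgroup $P$ and work inside the single normalizer $N(P)$: every power of a fixed element of $N(P)$ still lies in $N(P)\subseteq X_{NP}(B_n)$ and hence has $X_{NP}$-length one. The delicate part is choosing the element well, because Dehn twists and point-pushing maps are useless here: the twist about any curve is the central element of the corresponding parabolic subgroup and therefore already lies in $X_P(B_n)$. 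Instead I would take $x_k=\phi^k$ for $\phi\in N(P)$ acting as a pseudo-Anosov on an \emph{annular} complementary subsurface, whose mapping class group is not a standard parabolic of $A_{B_n}$, so that $\phi$ escapes every proper parabolic subgroup and the center. The lower bound $|\phi^k|_{X_P}\to\infty$ should then follow either from a subsurface-projection estimate or from a homogeneous quasimorphism that is bounded on every proper parabolic subgroup and on the center but non-zero on $\phi$.

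Finally, assembling these pieces gives the statement: $X_P(B_n)$ is a generating set whose Cayley graph is hyperbolic, hence a hyperbolic structure on $A_{B_n}$, and the displayed sequence shows $[X_{NP}(B_n)]\precneq[X_P(B_n)]$ in the poset of \cite{CalvezWiest}, so the two structures are inequivalent. I expect the hyperbolicity step to absorb essentially all of the difficulty, with the non-equivalence reducing to a single well-chosen length estimate once the correct witness $\phi$ has been identified.
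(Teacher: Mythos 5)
Your top-level skeleton (generation, the containment $X_P(B_n)\subseteq X_{NP}(B_n)$ giving one inequality for free, hyperbolicity, then unboundedness of $X_{NP}$-elements in the $X_P$-metric) is the right shape, but both substantive steps have problems. For hyperbolicity you leave the crux open and propose the wrong model: $Cay(A_{B_n},X_P(B_n))$ is not modeled on a ``coned-off'' induced subgraph of $\mathcal{CG}(\mathbb D_{n+1})$, so Theorem \ref{T:Vokes} (which applies to multicurve graphs, i.e.\ induced subgraphs of curve/multicurve graphs) is not the relevant tool; the correct model, via \cite[Proposition 3.4]{CalvezWiest}, is the arc graph $\mathcal A_{\partial}(\mathbb D_{n+1})$ of arcs with both endpoints on $\partial \mathbb D_{n+1}$, which is not a subgraph of the curve graph. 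The paper bypasses all subsurface-projection estimates: Proposition \ref{P:XPB} shows that $\eta_n$ itself induces a quasi-isometry from $Cay(A_{B_n},X_P(B_n))$ to $Cay(A_{A_n},X_P(A_n))$, with explicit quasi-inverse $y\mapsto \eta_n^{-1}(ya_i)$ using the coset representatives $a_i$ of $\mathfrak P_1$; the only technical input is the elementary Lemma \ref{L:Technical}, which says that if $g\in A_I$ and $a_{i_0}^{-1}ga_{j_0}$ is 1-pure, then this element lies in a standard parabolic after conjugating by one of the finitely many standardizers $\xi_I$, giving a uniform Lipschitz bound. Hyperbolicity of $X_P(B_n)$ then comes for free from the known hyperbolicity of $Cay(A_{A_n},X_P(A_n))$.

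Your non-equivalence witness is genuinely broken. Taking $P$ \emph{maximal}, the complement of the corresponding curve in $\mathbb D_{n+1}$ is an annulus containing a single puncture: there is no pseudo-Anosov there (``pseudo-Anosov on an annular subsurface'' is a contradiction in terms; that complement's mapping class group is generated by boundary twists). Worse, for maximal $P$ the normalizer satisfies $N(P)=\langle P,\Delta_{B_n}\rangle$, so every element of $N(P)$ is a product of an element of $P$ and a central power and has $X_P$-length at most $2$ --- no sequence $\phi^k\in N(P)$ with $|\phi^k|_{X_P}\to\infty$ exists. To salvage your approach you would need a \emph{non-maximal} parabolic, e.g.\ the one corresponding to a curve surrounding two punctures, whose exterior witness does support pseudo-Anosov maps (exactly as in the construction of Proposition \ref{P:NotQI}), and then prove the lower bound $|\phi^k|_{X_P}\to\infty$ via a distance-formula estimate in $\mathcal A_{\partial}(\mathbb D_{n+1})$; this is plausible but nontrivial. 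The paper avoids it entirely: once Proposition \ref{P:XPB} and Theorem \ref{T:B} identify the pair $(X_P(B_n),X_{NP}(B_n))$ with the pair $(X_P(A_n),X_{NP}(A_n))$ up to quasi-isometry along $\eta_n$, non-equivalence is pulled back from the known non-equivalence for $A_{A_n}$, \cite[Proposition 4.19]{CalvezWiest}.
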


The paper is arranged as follows. 
Section \ref{S:Prer} contains all the prerequisites for our results and some notation that will be used throughout the paper. In Section \ref{S:ArtinGroups}, we review some results on parabolic subgroups of Artin-Tits groups; in Section \ref{S:Correspondence}, we recall the  correspondence between essential simple closed curves in $\mathbb D_{n+1}$ and proper irreducible parabolic subgroups of $A_{A_n}$ and we introduce useful notation; in Section \ref{S:Vokes}, we present the above-mentionned theorem of Vokes \cite{vokes}. 
% which allows to establish the hyperbolicity of some subgraphs of the curve graph. 
Theorem \ref{T:B} is proved in Section~\ref{S:B}.
Sections \ref{S:TildeA} and \ref{S:TildeC} are devoted to the study of $\mathcal C_{parab}(\widetilde A_n)$ and $\mathcal C_{parab}(\widetilde C_n)$, respectively. 
%
%
%
%is devoted to the proofs of Theorems 
%\ref{T:TildeA} and~\ref{T:TildeAHyp} while Theorems \ref{T:TildeC} and \ref{T:TildeCHyp} are established in Section \ref{S:TildeC}.
Finally, in Section \ref{S:Comments} we prove Theorem \ref{T:XP}.
%
%open questions; in particular it is shown in Corollary \ref{Corollary} that the union of the normalizers of standard parabolic subgroups (the union of standard parabolic subgroups and the center, respectively) are hyperbolic structures on $A_{B_n}$ and that these structures are not equivalent --see \cite{CalvezWiest}. 

\section{Prerequisites}\label{S:Prer}

\subsection{Artin-Tits groups and Coxeter groups}\label{S:ArtinGroups}

Let $A_{\Gamma}$ be any Artin-Tits  group with standard generators $S$. Let $W_{\Gamma}=A_{\Gamma}/\langle\langle s^2\  |\ s\in S\rangle\rangle$ be the associated Coxeter group. 
The canonical projection $\pi: A_{\Gamma}\twoheadrightarrow W_{\Gamma}$ admits a \emph{set} section $\nu$ defined as follows --see for instance \cite[Theorem 3.3.1(ii)]{BjornerBrenti}. 
For $s\in S$, denote by $\bar s$ its image in~$W_{\Gamma}$ and $\bar S=\{\bar s\ |\ s\in S\}$. Let $w\in W_{\Gamma}$ and let $\bar s_1\ldots \bar s_r$ be a \emph{reduced expression} for $w$, meaning a shortest word representative for $w$ on $\bar S$; then $\nu(w)=s_1\ldots s_r$. 
%The \emph{pure} subgroup $PA_{\Gamma}$ of $A_{\Gamma}$ is the kernel of $\pi$: in type $A_n$ this is the pure braid group on $(n+1)$ strands $PA_{A_n}$.
%; we have the short exact sequence
%$$1\longrightarrow PA_{\Gamma} \hookrightarrow A_{\Gamma}\xrightarrow{\pi} W_{\Gamma}\longrightarrow 1.$$
The kernel of the projection $\pi$ is called the \emph{pure} Artin-Tits group (or coloured Artin-Tits group) and is denoted $PA_{\Gamma}$.

Given a subset $X$ of $S$, 
%we denote $\bar X=\{\bar x\  |\  x\in X\}$. 
the standard parabolic subgroup of $A_{\Gamma}$ generated by $X$ is denoted by $A_X$.
%; the subgroup of $W_{\Gamma}$ generated by $\bar X$ is denoted by $W_{\bar X}$.
 %it coincides with the Coxeter group $A_X/\langle\langle x^2, x\in X\rangle\rangle$ \cite{bourbaki}.

\begin{lemma}\cite[Theorem 4.1]{Paris}\label{L:ConjugacyParabolic}
Let $X,Y\subset S$.  The following are equivalent.
\begin{itemize}
\item[(i)] The subgroups $A_X$ and $A_Y$ are conjugate in $A_{\Gamma}$,
\item[(ii)] the sets $X,Y$ are conjugate in $A_{\Gamma}$.
%\item[(iii)] The sets $\bar X,\bar Y$ are conjugate in $W_{\Gamma}$,
%\item[(iv)] the subgroups $W_{\bar X}$ and $W_{\bar Y}$ are conjugate in $W_{\Gamma}$.
\end{itemize}
\end{lemma}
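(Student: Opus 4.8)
The implication (ii) $\Rightarrow$ (i) is immediate: if $g^{-1}Xg=Y$ as subsets of $A_{\Gamma}$, then $g^{-1}A_Xg$ is the subgroup generated by $g^{-1}Xg=Y$, which is exactly $A_Y$. All the content lies in the converse, and the plan is to reduce it to the classical analogue for the Coxeter group $W_{\Gamma}$ and then transport the set-level conclusion back to $A_{\Gamma}$ through the section $\nu$. Throughout, $\ell$ denotes the Coxeter length on $W_{\Gamma}$, and for $X\subseteq S$ I write $\bar X=\{\bar s\ :\ s\in X\}$ and $W_X=\langle\bar X\rangle$.

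First I would push the hypothesis down to $W_{\Gamma}$. Since $\pi$ is a homomorphism with $\pi(A_X)=W_X$, a conjugacy $g^{-1}A_Xg=A_Y$ yields $\pi(g)^{-1}W_X\pi(g)=W_Y$. Hence the set $D=\{w\in W_{\Gamma}\ :\ w^{-1}W_Xw=W_Y\}$ is non-empty, and I would pick $d\in D$ of minimal length. A one-line length argument shows $\ell(\bar s d)>\ell(d)$ for every $s\in X$ and $\ell(d\bar t)>\ell(d)$ for every $t\in Y$: indeed $\bar s d\in D$ (as $\bar s\in W_X$) and $d\bar t\in D$ (as $\bar t\in W_Y$), so a length drop on either side would contradict minimality.

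The crucial input is then the Coxeter-group fact that this minimal $d$ carries generators to generators, i.e. $d^{-1}\bar Xd=\bar Y$. I would establish it via the reflection representation, with root system $\Phi$, positive roots $\Phi^+$, simple roots $\alpha_s$, and sub-systems $\Phi_X,\Phi_Y=d^{-1}(\Phi_X)$. The inequality $\ell(\bar s d)>\ell(d)$ means $d^{-1}(\alpha_s)\in\Phi^+$ for all $s\in X$; since every positive root of $\Phi_X$ is a non-negative combination of the $\alpha_s$, the map $d^{-1}$ sends $\Phi_X^+$ into $\Phi^+$, hence bijectively onto $\Phi_Y^+$. A length-preserving, additive bijection of positive systems must send indecomposable (simple) roots to indecomposable roots, so $d^{-1}\{\alpha_s\}_{s\in X}=\{\alpha_t\}_{t\in Y}$. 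Consequently, for each $s\in X$ there is a unique $t=t(s)\in Y$ with $d^{-1}\bar s d=\bar t$, and $s\mapsto t(s)$ is a bijection $X\to Y$.

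Finally I would lift through $\nu$. Fix $s\in X$, let $t=t(s)$, so $\bar s d=d\bar t$; minimality gives $\ell(\bar s d)=\ell(d)+1=\ell(d\bar t)$. Thus a reduced word for $d$ preceded by $s$, and the same reduced word followed by $t$, are both reduced words for the single element $\bar s d=d\bar t$. Because $\nu$ is well defined and multiplicative along reduced products — this is precisely where one invokes that any two reduced words differ by braid moves (Tits--Matsumoto) and that those moves are the defining relations of $A_{\Gamma}$ — applying $\nu$ to the two expressions gives $s\,\nu(d)=\nu(d)\,t$, that is $s^{\nu(d)}=t$. Letting $s$ range over $X$ and using the bijection onto $Y$, I get $X^{\nu(d)}=Y$, so $X$ and $Y$ are conjugate in $A_{\Gamma}$. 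The main obstacle is the Coxeter step, namely verifying that a \emph{minimal-length} conjugator between standard parabolics maps simple reflections to simple reflections (not merely reflections to reflections); once this is in place the Artin-theoretic lift is soft, relying solely on the compatibility of $\nu$ with reduced products. An alternative, as in Paris's original argument, is to quote both the Coxeter statement and this normal-form compatibility directly from the literature.
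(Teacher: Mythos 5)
Your proof is correct, but note that the paper contains no proof of this statement to compare against: the lemma is imported verbatim from \cite{Paris} (Theorem 4.1), so your blind attempt should be measured against the standard argument for that theorem --- and it is a faithful, essentially complete reconstruction of it. The easy direction (ii) $\Rightarrow$ (i) is right as stated. For the converse, your three-step scheme is the classical route: first, pushing $A_X^g=A_Y$ down through $\pi$ is valid since $\pi(A_X)=W_X$; second, the Coxeter-level fact that a minimal-length $d$ with $d^{-1}W_Xd=W_Y$ satisfies $d^{-1}\bar Xd=\bar Y$ is proved correctly by your root-system argument, and --- importantly --- that argument works for \emph{arbitrary} (possibly infinite) Coxeter systems, which matters here because the lemma is invoked in this paper for Artin--Tits groups beyond spherical type: the length criterion $\ell(\bar sd)>\ell(d)\Leftrightarrow d^{-1}(\alpha_s)\in\Phi^+$, the description of $\Phi_X^+$ as the roots that are nonnegative combinations of $\{\alpha_s\}_{s\in X}$, and the identification of the reflections of a standard parabolic $W_Y$ with $\{r_\beta:\beta\in\Phi_Y^+\}$ all hold in the general Tits representation; third, the lift through $\nu$ via Tits--Matsumoto (well-definedness of $\nu$ plus multiplicativity on length-additive products, giving $s\,\nu(d)=\nu(d)\,t$ from the two reduced expressions of $\bar sd=d\bar t$) is exactly what produces the conjugator $\nu(d)$ in $A_\Gamma$. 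Two minor observations: the right-descent conditions $\ell(d\bar t)>\ell(d)$ are never actually used --- the left conditions alone force $d^{-1}\Phi_X^+=\Phi_Y^+$, and $\ell(d\bar t)=\ell(\bar sd)=\ell(d)+1$ then follows for free from $\bar sd=d\bar t$; and the adjective ``length-preserving'' for the bijection $\Phi_X^+\to\Phi_Y^+$ is superfluous, since linearity of $d^{-1}$ (hence preservation of indecomposability) is all that is needed to match the simple systems. Your closing remark is apt: in the context of this paper the result is simply quoted, and your argument supplies the omitted content.
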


\begin{lemma}\cite[Corollary 4.2]{Paris}\label{L:Corollary}
Let $s,t\in S$; then $s$ and $t$ are conjugate in $A_{\Gamma}$ if and only if there is a path in $\Gamma$ which connects $s$ and $t$ and follows only edges with odd labels. 
\end{lemma}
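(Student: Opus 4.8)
The plan is to prove both implications by isolating the role played by the parity of the edge labels. First I would introduce the equivalence relation $\sim$ on $S$ generated by declaring $s\sim t$ whenever $s$ and $t$ are joined in $\Gamma$ by an edge with odd label; by definition, $s\sim t$ precisely when $s$ and $t$ are connected by a path in $\Gamma$ following only odd-labeled edges. Let $C_1,\dots,C_k$ denote the resulting equivalence classes, so that the statement to be proven reads: $s$ and $t$ are conjugate in $A_{\Gamma}$ if and only if they lie in the same class $C_i$.

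For the implication that an odd path forces conjugacy, it suffices by transitivity of conjugacy to treat a single odd edge. So suppose $m=m_{s,t}$ is odd and write $\Delta=\Pi(s,t;m)$ for the fundamental element of the dihedral standard parabolic $A_{\{s,t\}}$. Using the defining relation $\Pi(s,t;m)=\Pi(t,s;m)$ one checks the identity $\Delta s=t\Delta$ (equivalently $s^{\Delta^{-1}}=t$); this is the familiar fact that conjugation by the Garside element of a dihedral Artin-Tits group of odd type interchanges the two standard generators. Hence $s$ and $t$ are conjugate, and concatenating such conjugations along an odd path shows that any two generators in the same class $C_i$ are conjugate.

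The reverse implication is the crux, and I would establish it by exhibiting an invariant that separates the classes. Define a map on generators by sending each $s\in C_i$ to the $i$-th standard basis vector $e_i\in\mathbb{Z}^k$. To see that this extends to a homomorphism $f\colon A_{\Gamma}\to\mathbb{Z}^k$, I would verify that it respects each relation $\Pi(s,t;m)=\Pi(t,s;m)$: when $m=2\ell$ is even both sides map to $\ell(f(s)+f(t))$ with no constraint on the classes of $s$ and $t$, whereas when $m=2\ell+1$ is odd the two sides map to $(\ell+1)f(s)+\ell f(t)$ and $\ell f(s)+(\ell+1)f(t)$, which agree exactly because $s$ and $t$, being joined by an odd edge, satisfy $f(s)=f(t)$. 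This map refines the exponent-sum homomorphism $\epsilon_{\Gamma}$, recovered by summing the coordinates. Since $\mathbb{Z}^k$ is abelian, $f$ is constant on conjugacy classes; thus if $s\in C_i$ and $t\in C_j$ are conjugate then $e_i=f(s)=f(t)=e_j$, forcing $i=j$, so $s$ and $t$ lie in a common class $C_i$, completing the proof.

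The main obstacle is the verification of the identity $\Delta s=t\Delta$ in the ``if'' direction, which is the one place a genuine (non-abelian) computation in the Artin-Tits group is required; the converse, by contrast, reduces to the purely formal check that the parity of each label makes $f$ well defined. One should also confirm that $\sim$ is genuinely an equivalence relation, so that the classes $C_i$ are well defined, which is immediate.
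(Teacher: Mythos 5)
Your proof is correct and complete: the identity $\Delta s=t\Delta$ for $\Delta=\Pi(s,t;m)$ with $m$ odd does hold (both sides equal $(ts)^{\ell+1}$ when $m=2\ell+1$, using $\Pi(s,t;m)=\Pi(t,s;m)$), and your map $f$ is precisely the abelianization of $A_{\Gamma}$, which is free abelian on the odd-equivalence classes of $S$; your parity check on each defining relation is exactly what makes it well defined, and invariance of $f$ under conjugation separates the classes. Note, however, that the paper itself contains no proof of this statement: it is quoted as \cite[Corollary 4.2]{Paris}, where it is obtained as a corollary of Paris's Theorem 4.1 (recalled in this paper as Lemma \ref{L:ConjugacyParabolic}), which classifies when two standard parabolic subgroups $A_X$ and $A_Y$ are conjugate and shows that any such conjugation is realized by conjugating the generating sets themselves; the rank-one case $X=\{s\}$, $Y=\{t\}$ then reduces to the odd-path criterion. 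So your route is genuinely different and considerably more elementary: Paris's approach requires a substantial analysis of conjugating elements between parabolic subgroups but yields the statement for arbitrary standard parabolic subgroups, whereas your argument is self-contained, produces an explicit conjugator (the dihedral Garside element along each odd edge), and identifies the abelianization $A_{\Gamma}\to\mathbb{Z}^k$ refining $\epsilon_{\Gamma}$ as the obstruction in the converse direction. One small point of care worth making explicit: in this paper's convention an edge label may be $\infty$ (no relation between the endpoints); such edges impose no constraint on $f$ and do not count as odd, which your argument handles correctly but silently.
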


In the rest of this section, we assume that $A_{\Gamma}$ is of \emph{spherical type}.
In this case, $W_{\Gamma}$ contains a unique longest element $w_0$ (see for instance \cite[Lemma 4.6.1]{Davis}). Denote its lift~$\nu(w_0)$ in~$A_{\Gamma}$ by~$\Delta_{\Gamma}$. Whenever $\Gamma$ is connected, it is known that the center of $A_{\Gamma}$ is cyclic generated by $\Delta_{\Gamma}$ or~$\Delta_{\Gamma}^2$ \cite[Theorem~7.2]{BS}. 
Any proper irreducible parabolic subgroup $P$ of $A_{\Gamma}$ is itself an irreducible Artin-Tits group of spherical type. The center of $P$ is a cyclic group generated by an element $z_P$ (actually we have the generators $z_P$ and $z_P^{-1}$ and we choose $z_P$ so that its exponent sum $\epsilon_{\Gamma}(z_P)$ is positive). We will always refer to this particular element as \emph{the central element} of $P$. 

The following two results will be used throughout the paper. The first one says in particular (with $g=1$) that the element $z_P$ determines completely the subgroup $P$ (and conversely). 
%The following version, due to Paris, will be sufficient for us as we only consider \emph{irreducible} parabolic subgroups; note however that the result has been later improved so as to include reducible parabolic subgroups \cite[Lemma 33]{Cumplido}. 

\begin{proposition}\cite[Theorem 5.2]{Paris}\label{P:Center}
Let $A_{\Gamma}$ be an Artin-Tits group of \emph{spherical type}. 
Let $P,Q$ be two irreducible parabolic subgroups of $A_{\Gamma}$ and let $g\in A_{\Gamma}$. 
Then $Q=P^g$ if and only if $z_Q=z_P^g$. 
\end{proposition}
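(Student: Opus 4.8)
The plan is to prove the two implications separately; the forward implication is essentially formal, while the converse carries all the content. For the easy direction, suppose $Q=P^g$. Conjugation by $g$ is an automorphism of $A_\Gamma$, so it maps $Z(P)$ isomorphically onto $Z(P^g)=Z(Q)$; hence $z_P^g$ generates $Z(Q)=\langle z_Q\rangle$, and therefore $z_Q=(z_P^g)^{\pm 1}$. To fix the sign I would use the homomorphism $\epsilon_\Gamma$: since conjugation preserves exponent sum we have $\epsilon_\Gamma(z_P^g)=\epsilon_\Gamma(z_P)>0$, and $\epsilon_\Gamma(z_Q)>0$ by our normalization of central elements, so the two positive generators of the same infinite cyclic group must coincide, i.e. $z_Q=z_P^g$.

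For the converse I would reduce everything to a single key statement: \emph{for an irreducible parabolic subgroup $P$, the subgroup $P$ is the unique smallest parabolic subgroup of $A_\Gamma$ containing its central element $z_P$}. Granting this, one obtains a parabolic-closure operator $h\mapsto P(h)$, the minimal parabolic subgroup containing $h$ --- well defined because the intersection of two parabolic subgroups of a spherical-type Artin--Tits group is again parabolic --- and this operator is visibly equivariant, $P(h)^g=P(h^g)$, since conjugation by $g$ permutes parabolic subgroups preserving inclusions and hence preserving minimality. Then from $z_Q=z_P^g$ I would conclude $Q=P(z_Q)=P(z_P^g)=P(z_P)^g=P^g$, applying the key statement to both $Q$ and $P$.

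The heart of the matter, and the step I expect to be the main obstacle, is the key statement, which amounts to showing that $z_P$ lies in no \emph{proper} parabolic subgroup of $P$. Because $z_P$ is central in $P$, it is fixed under conjugation, so if $z_P$ belonged to some parabolic $R=A_Y^{\,h}$ of $P$ then in fact $z_P\in A_Y$; thus it suffices to rule out $z_P\in A_Y$ for every proper \emph{standard} parabolic $A_Y$, with $Y$ a proper subset of the generating set $X$ of $P$. Here I would invoke Garside theory: write $z_P=\Delta_P^{\,d}$ with $d\in\{1,2\}$, where $\Delta_P$ is the Garside element of $P$; since $P$ is irreducible, $\Delta_Y$ is a proper divisor of $\Delta_P$ whenever $Y\subsetneq X$. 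The left normal form of $\Delta_P^{\,d}$ has all its factors equal to $\Delta_P$, whereas every element of $A_Y$ admits a normal form, computed in the sub-Garside structure $A_Y$ and preserved by the inclusion $A_Y\hookrightarrow P$, whose factors are divisors of $\Delta_Y$. By uniqueness of normal forms this would force $\Delta_P\preccurlyeq\Delta_Y$, contradicting $Y\subsetneq X$; hence $z_P\notin A_Y$.

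The two delicate structural inputs I would need to cite or verify carefully for spherical-type Artin--Tits groups are the compatibility of Garside normal forms under standard parabolic inclusions and the fact that the intersection of parabolic subgroups is again parabolic; both are available in the literature and, I expect, constitute the only genuinely non-formal ingredients of the argument.
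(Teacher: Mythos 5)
Your forward implication is correct and complete: conjugation carries $Z(P)$ onto $Z(Q)$, so $z_P^g$ is a generator of the infinite cyclic group $Z(Q)$, and the normalization $\epsilon_\Gamma(z_P),\epsilon_\Gamma(z_Q)>0$ together with conjugation-invariance of $\epsilon_\Gamma$ pins down the sign. The Garside core of your converse is also sound: since any parabolic subgroup of $P$ is $A_Y^h$ with $h\in P$ and $z_P$ is central in $P$, membership reduces to $z_P\in A_Y$ for $Y\subsetneq X$; positivity of $z_P=\Delta_P^d$ plus compatibility of left normal forms under the standard inclusion $A_Y\hookrightarrow P$ forces $\Delta_P\preccurlyeq \Delta_Y$, which is absurd. (Minor slip: irreducibility of $P$ is not what makes $\Delta_Y$ a proper divisor of $\Delta_P$ --- that holds for any $Y\subsetneq X$; irreducibility is what you need for $Z(P)$ to be cyclic generated by $\Delta_P^d$ in the first place.)

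The genuine gap is a circularity in the scaffolding around that core. Your two ``structural inputs'' --- that the intersection of two parabolic subgroups of a spherical-type Artin--Tits group is parabolic, and that a parabolic subgroup of $A_\Gamma$ contained in $P$ is a parabolic subgroup of $P$ --- are theorems of Cumplido--Gebhardt--Gonz\'alez-Meneses--Wiest \cite{CGGMW} (2019), and their proofs there use precisely Proposition~\ref{P:Center} (Paris's 1997 Theorem 5.2) as a basic tool: the injectivity and equivariance of $P\mapsto z_P$, and the fact that the centralizer of $z_P$ normalizes $P$, are invoked throughout their development, including in the intersection theorem. So the statement you are asked to prove sits logically \emph{upstream} of the lattice structure you import, and your derivation, while valid as an implication, cannot serve as a proof. (You would also need a rank or chain argument, not just pairwise intersections, for the parabolic closure to be well defined --- again material from \cite{CGGMW}.) The paper itself offers no proof but cites \cite{Paris}, whose argument is independent of any intersection theorem: it reduces to standard parabolic subgroups, and combines his Theorem 4.1 (conjugate standard parabolic subgroups have conjugate generating sets, proved by ribbon-type arguments) with an analysis of centralizers of the elements $z_{A_X}$. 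To repair your proof you would either follow that route or prove directly, without the closure operator, that any parabolic subgroup containing $z_P$ contains $P$ --- but that is exactly where the real content of Paris's theorem lies, and your normal-form computation alone, confined as it is to parabolic subgroups \emph{of} $P$, does not reach it.
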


The second result reduces the definition of adjacency in the graph of irreducible parabolic subgroups to a very simple commutation condition between the respective central elements. 

\begin{proposition}\cite[Theorem 2.2]{CGGMW}\label{P:Adjacent}
Let $A_{\Gamma}$ be an Artin-Tits group of \emph{spherical type}.
Let $P,Q$ be two distinct irreducible parabolic subgroups of $A_{\Gamma}$. Then $P,Q$ are adjacent (Definition \ref{D:CParab}) if and only if $z_P$ and $z_Q$ commute.
\end{proposition}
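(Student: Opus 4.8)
The plan is to treat the two implications separately, the forward one being essentially formal and the converse carrying all the weight.

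For the direction ``adjacency $\Rightarrow$ commutation'', I would read off the two cases of Definition~\ref{D:CParab}. If, say, $P\subseteq Q$, then $z_P\in P\subseteq Q$, and since $z_Q$ generates the centre of $Q$ it commutes with every element of $Q$, in particular with $z_P$; the case $Q\subseteq P$ is symmetric. If instead $P\cap Q=\{1\}$ with $pq=qp$ for all $p\in P$, $q\in Q$, then specialising to $p=z_P$ and $q=z_Q$ gives $z_Pz_Q=z_Qz_P$. So this direction needs nothing beyond the definitions.

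The substance is the converse. Assume $z_Pz_Q=z_Qz_P$. My first move is to upgrade commutation of the central elements to normalisation of the subgroups, via Proposition~\ref{P:Center}. The central element of a conjugate $Q^{g}$ is $z_Q^{g}$ (its exponent sum is still positive because $\epsilon_{\Gamma}$ is a conjugation-invariant homomorphism), so applying Proposition~\ref{P:Center} with both parabolics equal to $Q$ and with $g=z_P$ gives $Q^{z_P}=Q$ as soon as $z_Q^{z_P}=z_Q$, which is precisely our hypothesis. Hence $z_P$ normalises $Q$, and symmetrically $z_Q$ normalises $P$. I would then pass to the parabolic closure $M$ of $P\cup Q$ (the smallest parabolic subgroup containing both, which exists since intersections of parabolic subgroups are again parabolic) and work inside the spherical-type group $M$, so that I may assume $M=A_{\Gamma}$. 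A clean dichotomy appears. If $A_{\Gamma}$ is reducible, $A_{\Gamma}=A_{\Gamma_1}\times\cdots\times A_{\Gamma_k}$ along the connected components of $\Gamma$; since $P$ and $Q$ are irreducible, each lies in a single factor (an irreducible parabolic has connected support, and conjugation by an element of the product keeps it inside its factor). If $P,Q$ sit in different factors they commute elementwise and meet trivially, giving the second case of adjacency; and they cannot sit in the same factor, for then the parabolic closure of $P\cup Q$ would be proper in $A_{\Gamma}$, contradicting $M=A_{\Gamma}$. If instead $A_{\Gamma}$ is irreducible, I would argue that either one of $P,Q$ contains the other (first case of adjacency), or we reach a contradiction: if $P\cap Q=\{1\}$ and $P,Q$ commuted elementwise, then $\langle P,Q\rangle\cong P\times Q$ would be reducible, so its parabolic closure would be a proper reducible parabolic, again contradicting $M=A_{\Gamma}$.

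The main obstacle is exactly this last, irreducible, case: I must rule out that $z_P$ and $z_Q$ commute while $P$ and $Q$ are \emph{transverse}, i.e. neither nested nor elementwise-commuting. In the braid-group model this is the geometric fact that two curves whose associated twists commute are disjoint or nested; algebraically it is the heart of the statement. I expect to prove it by exploiting the normalisation obtained above together with the structure of the centraliser of $z_P$ inside an irreducible spherical-type Artin--Tits group: $z_P$ centralises the parabolic $P\cap Q$ and normalises $Q$, and one wants to conclude that the conjugation action of $z_P$ on $Q$ forces $P\cap Q$ to split off as a direct factor, which is incompatible with the parabolic closure of $P\cup Q$ being the whole irreducible group unless one subgroup already contains the other. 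This is where the finer results on parabolic subgroups --- stability under intersection and the description of centralisers of central elements --- become indispensable, and where essentially all the difficulty is concentrated.
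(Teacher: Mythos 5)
This proposition is not proved in the paper at all: it is imported verbatim from \cite{CGGMW} (their Theorem 2.2), so there is no in-paper argument to compare yours against; the only question is whether your sketch itself constitutes a proof, and it does not. Your forward direction is complete and correct, and your opening moves for the converse are sound: deducing $Q^{z_P}=Q$ from $z_Q^{z_P}=z_Q$ via Proposition~\ref{P:Center} is exactly the right upgrade, and passing to the parabolic closure $M$ of $P\cup Q$ is legitimate --- though you should flag that this step already consumes two substantial theorems of \cite{CGGMW} (arbitrary intersections of parabolic subgroups are parabolic, and a parabolic subgroup contained in a parabolic subgroup $M$ is parabolic \emph{in} $M$), which happen to be proved there independently of the adjacency characterization, so no circularity, but none of this is ``formal''.

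The genuine gap is the one you yourself point to and then do not close: the transverse case, where $M$ is irreducible and $P,Q$ are neither nested nor trivially-intersecting-and-commuting. Everything you write about it is in the subjunctive (``I expect to prove'', ``one wants to conclude''), and the proposed mechanism --- that $z_P$ normalizing $Q$ while centralizing $P\cap Q$ should force $P\cap Q$ to ``split off as a direct factor'' of something --- is not backed by any stated lemma, and it is not clear why normalization would produce a direct-factor splitting at all. This case is precisely the algebraic content of the theorem: in the braid-group model it is the statement that two (half-)Dehn twists commute only if the underlying curves are disjoint or nested, and in \cite{CGGMW} its algebraic proof occupies the technical core of the paper (parabolic closures of single elements, Garside-theoretic standardization and ribbon arguments), not a paragraph. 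Your reducible-case analysis also has a small logical wobble --- trivial intersection with elementwise commutation is the second clause of Definition~\ref{D:CParab}, hence adjacency rather than a contradiction, though as you use it (to show this sub-case forces $M$ reducible) it is harmless. In sum: correct frame, correct easy direction, correct first reductions, but the heart of the converse is asserted as a hope rather than proved, so the proposal is not a proof of the proposition.
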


\subsection{Braids, curves and parabolic subgroups}\label{S:Correspondence}

%\subsection{Braids, curves and parabolic subgroups}\label{S:Curves}
%In this section we 
%will prove a key-lemma for both proofs of Propositions \ref{Prop:XNPHyp} and Proposition \ref{Prop:XPHyp}. 
%review a geometric perspective on braids and parabolic subgroups of $A_{A_n}$ and establish some useful lemmas.
%; although not strictly necessary, this should help providing a good picture for the proof.
Recall that the braid group on $(n+1)$ strands --or Artin-Tits group $A_{A_n}$-- can be identified with the Mapping Class Group of a closed disk with $(n+1)$ punctures $\mathbb D_{n+1}$. 
%, that is the group of isotopy classes of orientation-preserving automorphisms of $\mathbb D_n$ which induce the identity on the boundary of $\mathbb D_n$.
Assume that $\mathbb D_{n+1}$ is the closed disk in the complex plane of radius 
$\frac{n+2}{2}$ centered at $\frac{n+2}{2}$ and the punctures are at the integer numbers $1\leqslant i \leqslant n+1$. 
For $1\leqslant i\leqslant n$, the standard generator~$\sigma_i$ of $A_{A_n}$ corresponds to a clockwise half-Dehn twist along the horizontal segment $[i,i+1]$. 
The group $A_{A_n}$ naturally acts --on the right-- on the set of isotopy classes of essential simple closed curves in $\mathbb D_{n+1}$. 
In the sequel we will  
simply write
``essential curve'' or even ``curve'' instead of ``isotopy class of essential simple closed curve''; accordingly we will say that two distinct curves are disjoint if the corresponding isotopy classes admit disjoint representatives. 
%(curves without auto-intersection and enclosing at least 2 and at most $n$ punctures). 
The result of the action of a braid~$y$ on a curve $\mathcal C$ will be denoted by~$\mathcal C^y$.
Finally, note that a curve~$\mathcal C$ in~$\mathbb D_{n+1}$ divides the disk in two connected components naturally referred to as the \emph{interior} and the \emph{exterior} of $\mathcal C$.

Let $I$ be a \emph{proper subinterval} of $[n]=\{1,\ldots,n\}$, that is $$\emptyset\neq I\subsetneq [n],\ \  \left[(i<j<k) \wedge (i,k\in I)\right] \Longrightarrow j\in I.$$ 
This defines a proper irreducible standard parabolic subgroup of $A_{A_n}$, generated by $\{\sigma_i \ |\  i\in I\}$; denote this subgroup by~$A_I$. Let $m=\min(I)$ and $k=\#I$; the \emph{standard} or \emph{round curve associated to $I$} is the isotopy class of a geometric circle surrounding the $k+1$ punctures $m,\ldots, m+k$. 

As explained in \cite[Section 2]{CGGMW}, there is a one-to-one correspondence

$$\{ \text{curves in $\mathbb D_{n+1}$}\} \xrightarrow{\ \ \ \ \mathfrak f_n\ \ \ \ } \{\text{proper irreducible parabolic subgroups of $A_{A_n}$}\}$$

which induces a graph isomorphism $\mathcal {CG}(\mathbb D_{n+1})\longrightarrow \mathcal C_{parab}(A_{n})$.
%Given a curve in $\mathbb D_{n+1}$, the set of all isotopy classes of automorphisms of $\mathbb D_{n+1}$ whose support is enclosed by $\mathcal C$ is a parabolic subgroup.  
To a curve $\mathcal C$ in $\mathbb D_{n+1}$, we associate the subgroup $\mathfrak f_n(\mathcal C)$ of $A_{A_n}$ consisting of all isotopy classes of homeomorphisms of $\mathbb D_{n+1}$ whose support is enclosed by $\mathcal C$; this is a proper irreducible parabolic subgroup. 
In particular, given a proper subinterval $I$ of $[n]$, we have $\mathfrak f_n(\mathcal C_I)=A_I$. 
The inverse correspondence is given by the --well-defined-- formula $A_I^y\mapsto \mathcal C_I ^y$, for any proper subinterval $I$ of $[n]$ and any $y\in A_{A_n}$. 

Let us see that the adjacency condition given in Proposition \ref{P:Adjacent} turns $\mathfrak f_n$ into a  graph isomorphism. Let~$\mathcal C$ be a curve in~$\mathbb D_{n+1}$, let $P=\mathfrak f_n(\mathcal C)$ and let $z_P$ be  the central element of $P$. If $\mathcal C$ surrounds at least three punctures, then $z_P$ is the Dehn twist around the curve~$\mathcal C$. Otherwise, $z_P$ is the half-Dehn twist along an arc connecting the two punctures in the interior of $\mathcal C$ and which does not intersect $\mathcal C$. Now, given two parabolic subgroups $P_1=\mathfrak f_n(\mathcal C_1)$ and $P_2=\mathfrak f_n(\mathcal C_2)$, $z_{P_1}$ and $z_{P_2}$ commute if and only if $\mathcal C_1$ and $\mathcal C_2$ are disjoint. 

Before going on, we introduce a set of special braids which will play an important role in the paper. Let $n\geqslant 3$; let $p,q,r$ be positive integers with $1\leqslant p\leqslant q$ and $q+1\leqslant r\leqslant n+1$. 
We define 
$$\xi_{p,q,r}=\Pi_{i=q}^{r-1}\sigma_i\ldots \sigma_{i-(q-p)}.$$
In this positive braid, the strands numbered $p,\ldots, q$ end at positions $p+r-q,\ldots r$ without crossings between them and the strands numbered $q+1,\ldots, r$ end at positions $p,\ldots, p+r-q-1$ without crossings between them. As an example, Figure \ref{F:Xi}(i) shows $\xi_{3,4,8}\in A_{A_{9}}$.
%An example of such a braid $\xi_{p,q,r}$ is depicted in Figure \ref{F:Xi}. 

For $1\leqslant i \leqslant n$, define also $a_i=\sigma_i\ldots \sigma_1$, $b_i=\sigma_i\ldots \sigma_n$, and $a_0=b_{n+1}$ is the trivial braid. Equivalently, $a_i=\xi_{1,i-1,i}$ and $b_i=\xi_{i,i+1,n+1}$. 
For $y\in A_{A_n}$, we denote by $\pi_{y}$ the permutation in~$\mathfrak S_{n+1}$ associated to~$y$. Notice that $\pi_{a_i}(i+1)=1$, for all $0\leqslant i\leqslant n$ and $\pi_{b_i}(i)=n+1$, for all $1\leqslant i\leqslant n+1$.

\begin{lemma}\label{L:ActionOfai}
Let $I$ be a proper subinterval of $[n]$, $m_I=\min(I)$ and $k_I=\#I$, so that the circle~$\mathcal C_I$ in $\mathbb D_{n+1}$ surrounds the punctures $m_I$ to $m_I+k_I$. Let $0\leqslant i_0 \leqslant n$. 
\begin{itemize}
\item[(i)] If $i_0+1<m_I$, i.e. if the puncture $i_0+1$ is to the left of $\mathcal C_I$, then $\mathcal C_I^{a_{i_0}}=\mathcal C_I$.
\item[(ii)] If $i_0+1>m_I+k_I$, i.e. if the puncture $i_0+1$ is to the right of $\mathcal C_I$, then $\mathcal C_I^{a_{i_0}}=\mathcal C_{I'}$, where $I'=\{i+1\ |\  i\in I\}$.
\item[(iii)] If $m_I\leqslant i_0+1\leqslant m_I+k_I$, i.e. if the puncture $i_0+1$ is in the interior of $\mathcal C_I$, then $\mathcal C_I^{a_{i_0}}=\mathcal C_I^{a_{m_I-1}}$ is not standard (except if $m_I=1$) and $\mathcal C_I^{a_{i_0}\xi_{2,m_I,m_I+k_I}}=\mathcal C_{[1,k_I]}$. We will write $\xi_I= \xi_{2,m_I,m_I+k_I}$ (it does not depend on $i_0$).
\end{itemize}
\end{lemma}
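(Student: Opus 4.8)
The plan is to track the effect of the braids $a_{i_0}=\sigma_{i_0}\cdots\sigma_1$ on the round circle $\mathcal C_I$ entirely through the induced permutation $\pi_{a_{i_0}}$ together with a geometric picture of how the half-Dehn twists $\sigma_j$ move punctures. Recall that $a_{i_0}$ drags the puncture $i_0+1$ leftward, passing it successively below (or above, depending on the chosen convention) the punctures $i_0, i_0-1,\ldots, 1$, until it lands in position $1$, while shifting each of the punctures $1,\ldots,i_0$ one position to the right; all other punctures stay fixed. Since $\mathcal C_I$ is a round circle enclosing exactly the punctures $m_I,\ldots,m_I+k_I$, the behaviour of $\mathcal C_I^{a_{i_0}}$ depends only on the relative position of the single puncture $i_0+1$ with respect to the block $[m_I, m_I+k_I]$, which gives the three cases of the statement.

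First I would dispose of case (i). If $i_0+1<m_I$, then the entire braiding motion of $a_{i_0}$ takes place strictly to the left of the punctures enclosed by $\mathcal C_I$: every generator $\sigma_j$ with $j\leqslant i_0<m_I-1$ is a half-twist supported in a region disjoint from a representative of $\mathcal C_I$. Hence $\mathcal C_I$ can be isotoped off the support of $a_{i_0}$, giving $\mathcal C_I^{a_{i_0}}=\mathcal C_I$. Next, for case (ii), when $i_0+1>m_I+k_I$, the puncture $i_0+1$ travels from the right of $\mathcal C_I$ all the way to position $1$, which is to the left of $\mathcal C_I$; in doing so it passes \emph{around} (and in fact can be isotoped cleanly past the outside of) the whole block enclosed by $\mathcal C_I$, while every puncture strictly inside $\mathcal C_I$, namely $m_I,\ldots,m_I+k_I$, is shifted one step to the right by $\pi_{a_{i_0}}$. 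Here I would argue that the round circle is carried to the round circle enclosing the shifted block, i.e. to $\mathcal C_{I'}$ with $I'=\{i+1\mid i\in I\}$; the cleanest way is to observe via the permutation $\pi_{a_{i_0}}$ that the new enclosed punctures are $m_I+1,\ldots,m_I+k_I+1$ and that no strand of $a_{i_0}$ enters the interior, so roundness is preserved.

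The heart of the lemma, and the step I expect to be the main obstacle, is case (iii), where $i_0+1$ lies strictly inside the block $[m_I, m_I+k_I]$. Here the moving strand starts inside $\mathcal C_I$ and exits to the left, so it must cross the curve, and $\mathcal C_I^{a_{i_0}}$ genuinely fails to be round (except in the degenerate situation $m_I=1$, where there is nothing to the left to pass). The two substeps are: (a) to show $\mathcal C_I^{a_{i_0}}=\mathcal C_I^{a_{m_I-1}}$, i.e. that the resulting curve does not depend on which interior puncture is pulled out but only on how many punctures lie to the left of $\mathcal C_I$; and (b) to verify that the correcting braid $\xi_I=\xi_{2,m_I,m_I+k_I}$ restores roundness and standardness, sending the curve to $\mathcal C_{[1,k_I]}$. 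For (a) I would compare $a_{i_0}$ and $a_{m_I-1}$ by factoring $a_{i_0}=\sigma_{i_0}\cdots\sigma_{m_I}\cdot a_{m_I-1}$ and checking that the extra prefix $\sigma_{i_0}\cdots\sigma_{m_I}$, which only permutes punctures \emph{inside} $\mathcal C_I$, fixes the isotopy class of $\mathcal C_I$ (a half-twist of two interior punctures does not move a curve enclosing both). For (b) I would use the explicit combinatorics of $\xi_{p,q,r}$ recorded just before the lemma: $\xi_{2,m_I,m_I+k_I}$ is designed to send the strands in positions $2,\ldots,m_I$ rightward past those in positions $m_I+1,\ldots,m_I+k_I$, which is precisely the motion that undoes the earlier crossing and realigns the enclosed punctures into the leftmost block $1,\ldots,k_I+1$. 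I would track the composite permutation $\pi_{a_{m_I-1}\xi_I}$ to confirm the enclosed punctures become $1,\ldots,k_I+1$, then invoke Proposition~\ref{P:Center} (with $g=1$) to upgrade the permutation-level match to an actual equality of curves by checking the central elements coincide, thereby concluding $\mathcal C_I^{a_{i_0}\xi_I}=\mathcal C_{[1,k_I]}$.
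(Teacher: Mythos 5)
Your treatment of (i), (ii) and the first half of (iii) is correct and is essentially the paper's own proof: the paper disposes of (i) and (ii) by the picture (Figure \ref{F:Xi}(ii)), and its entire written argument for (iii) is exactly your substep (a), namely the factorization $a_{i_0}=(\sigma_{i_0}\cdots\sigma_{m_I})\,a_{m_I-1}$ together with the observation that the prefix consists of crossings inner to $\mathcal C_I$ and hence fixes that curve, leaving only the action of $a_{m_I-1}$.

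The one place where your plan is weaker than you present it is the standardization substep (b). Tracking the permutation $\pi_{a_{m_I-1}\xi_I}$ and then ``invoking Proposition \ref{P:Center} with $g=1$'' does not close the argument: the permutation only records which punctures the image curve encloses, which is strictly weaker than the curve being round, and the central-element identity you would need to verify, namely $z_{A_I}^{a_{m_I-1}\xi_I}=z_{A_{[1,k_I]}}$, is by Proposition \ref{P:Center} \emph{precisely equivalent} to the curve equality $\mathcal C_I^{a_{i_0}\xi_I}=\mathcal C_{[1,k_I]}$ being proved --- so as stated this ``upgrade'' is a restatement of the goal, not a proof of it. What actually carries the day is the geometric mechanism you describe in the previous sentence of your proposal, which is also the paper's actual (figure-based) proof via Figure \ref{F:Xi}(iii): both $a_{m_I-1}$ and $\xi_I$ are \emph{positive} braids, so the block of strands $m_I+1,\ldots,m_I+k_I$, carried leftward by $\xi_I$ without internal crossings, passes the punctures $2,\ldots,m_I$ on the same side as the strand dragged by $a_{m_I-1}$ did; the tongue created by $a_{m_I-1}$ is thereby absorbed and the image curve re-closes into the round curve about positions $1,\ldots,k_I+1$. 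If you replace the permutation/central-element step by this sign-consistency argument (or by an honest direct computation of the conjugated full twist $z_{A_I}^{a_{m_I-1}\xi_I}$), your proof is complete and coincides with the paper's.
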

\begin{proof}
The contents of Lemma \ref{L:ActionOfai} are depicted in Figure \ref{F:Xi}(ii)-(iii). Only the third case might need a short proof: it suffices to observe that the crossings $\sigma_{i_0},\ldots, \sigma_{m_I}$ fix the curve $\mathcal C_I$ as they are inner to it, so it only remains the action of $\sigma_{m_I-1}\cdots \sigma_1 = a_{m_I-1}$.
\end{proof}

\begin{figure}[hbt]
\center
\includegraphics[scale=0.8]{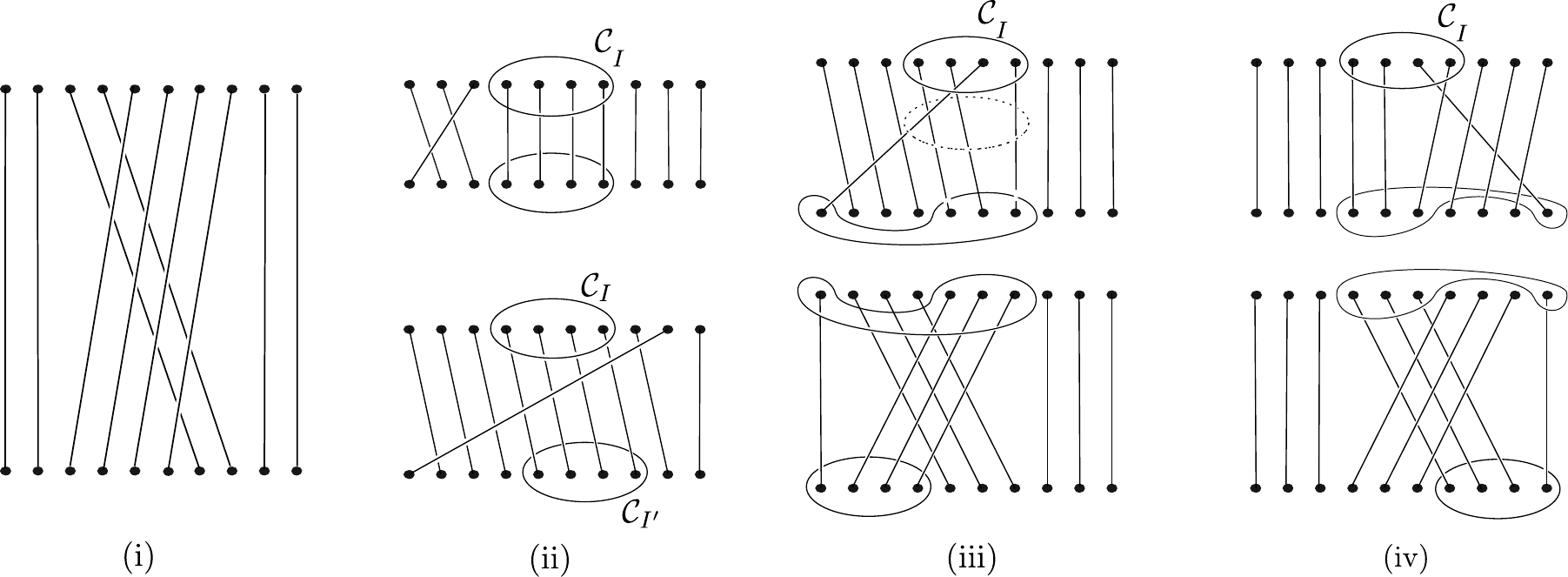}
\caption{
(i) The braid $\xi_{3,4,8}\in A_{A_9}$. (ii) Lemma \ref{L:ActionOfai}(i)-(ii): if the puncture $i_0+1$ is not surrounded by the curve $\mathcal C_I$, then $\mathcal C_I^{a_{i_0}}$ is standard again. (iii) Lemma \ref{L:ActionOfai}(iii): the puncture $i_0+1$ is surrounded by $\mathcal C_I$ (and $m_I>1$). The curve $\mathcal C_I$ is preserved by the action of the first crossings of~$a_{i_0}$; the action of the braid $\xi_I=\xi_{2, m_I, m_i+k_I}$ standardizes $\mathcal C_I^{a_{i_0}}$. 
(iv) Lemma \ref{L:ActionOfbi}(iii): the puncture $i_0$ is surrounded by $\mathcal C_I$ (and $m_I+k_I<n+1$). The action of the braid $\xi'_I=\xi_{m_I, m_i+k_I-1,n}$ standardizes the curve~$\mathcal C_I^{b_{i_0}}$.}
\label{F:Xi}
\end{figure}

\begin{lemma}\label{L:ActionOfbi}
Let $I$ be a proper subinterval of $[n]$, $m_I=\min(I)$ and $k_I=\#I$, so that the circle~$\mathcal C_I$ in $\mathbb D_{n+1}$ surrounds the punctures $m_I$ to $m_I+k_I$. Let $1\leqslant j_0 \leqslant n+1$. 
\begin{itemize}
\item[(i)] If $j_0<m_I$, i.e. if the puncture $j_0$ is to the left of $\mathcal C_I$, then $\mathcal C_I^{b_{j_0}}=\mathcal C_{I'}$, where $I'=\{i-1\ |\ i\in I\}$.
\item[(ii)] If $j_0>m_I+k_I$, i.e. if the puncture $j_0$ is to the right of $\mathcal C_I$, then $\mathcal C_I^{b_{j_0}}=\mathcal C_{I}$, where $I'=\{i+1\ |\  i\in I\}$.
\item[(iii)] If $m_I\leqslant j_0\leqslant m_I+k_I$, i.e. if the puncture $j_0$ is in the interior of $\mathcal C_I$, then $\mathcal C_I^{b_{j_0}}=\mathcal C_I^{b_{m_I+k_I}}$ is not standard (except if $m_I+k_I=n+1$) and $\mathcal C_I^{b_{j_0}\xi_{m_I,m_I+k_I-1,n}}=\mathcal C_{[n-k_I+1,n]}$. We will write $\xi'_I= \xi_{m_I,m_I+k_I-1,n}$ (it does not depend on $j_0$).

\end{itemize}
\end{lemma}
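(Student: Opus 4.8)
The plan is to mirror the proof of Lemma~\ref{L:ActionOfai}, exploiting that $b_{j_0}=\sigma_{j_0}\cdots\sigma_n$ drags the puncture sitting at position $j_0$ rightward to position $n+1$, exactly as $a_{i_0}$ drags the puncture $i_0+1$ leftward to position $1$. In fact the two lemmas are interchanged by the orientation-preserving rotation $\tau$ of $\mathbb D_{n+1}$ by angle $\pi$ about its center: $\tau$ permutes the punctures by $i\mapsto n+2-i$, conjugates $\sigma_i$ to $\sigma_{n+1-i}$, hence sends $b_{j_0}$ to $a_{n+1-j_0}$ and carries every round curve to a round curve (with $\tau\,\mathcal C_{[1,k_I]}=\mathcal C_{[n-k_I+1,n]}$). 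One could deduce Lemma~\ref{L:ActionOfbi} formally from Lemma~\ref{L:ActionOfai} along these lines after transporting $\tau$ through the right action, but since the pictures are transparent I would argue directly.

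For (i), the strand issued from position $j_0<m_I$ travels to the right below the round curve $\mathcal C_I$ (the half-twists being clockwise), so $\mathcal C_I$ stays round while each of its interior punctures $m_I,\ldots,m_I+k_I$ is shifted one unit to the left; this yields $\mathcal C_I^{b_{j_0}}=\mathcal C_{I'}$ with $I'=\{i-1\mid i\in I\}$. For (ii), every crossing occurring in $b_{j_0}=\sigma_{j_0}\cdots\sigma_n$ involves only positions $\geqslant j_0>m_I+k_I$, i.e. lies entirely in the exterior of $\mathcal C_I$, so the curve is untouched and $\mathcal C_I^{b_{j_0}}=\mathcal C_I$ (the index set $I'=\{i+1\mid i\in I\}$ recorded in the statement is a slip: the curve is unchanged).

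For (iii), I would first reduce to the case $j_0=m_I+k_I$. Writing $b_{j_0}=(\sigma_{j_0}\cdots\sigma_{m_I+k_I-1})\,b_{m_I+k_I}$, the crossings $\sigma_{j_0},\ldots,\sigma_{m_I+k_I-1}$ only involve positions in $[m_I,m_I+k_I]$, hence are inner to $\mathcal C_I$ and fix it, so $\mathcal C_I^{b_{j_0}}=\mathcal C_I^{b_{m_I+k_I}}$. It then remains to verify that $\xi'_I=\xi_{m_I,m_I+k_I-1,n}$ standardizes $\mathcal C_I^{b_{m_I+k_I}}$ into $\mathcal C_{[n-k_I+1,n]}$, which is precisely the content of Figure~\ref{F:Xi}(iv): pulling the rightmost interior puncture out to position $n+1$ destroys roundness (unless $m_I+k_I=n+1$), and $\xi'_I$ straightens the resulting curve back to the round curve enclosing the rightmost $k_I+1$ punctures.

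The only genuinely delicate point is this last isotopy: one must check, crossing by crossing and respecting the clockwise convention for the half-twists together with the right action, that $\xi'_I$ carries the distorted curve back to the \emph{round} curve $\mathcal C_{[n-k_I+1,n]}$ rather than to some other curve enclosing the same punctures. This is the exact mirror of the verification for $\xi_I$ in Lemma~\ref{L:ActionOfai}(iii); it is a routine picture-chase read off from Figure~\ref{F:Xi}(iv), and its endpoints are independently pinned down by the rotation symmetry $\tau$ above, which sends $\mathcal C_{[1,k_I]}$ to $\mathcal C_{[n-k_I+1,n]}$ and thereby confirms that the two re-rounding braids $\xi_I$ and $\xi'_I$ produce mirror-image outcomes.
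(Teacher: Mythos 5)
Your proposal is correct and takes essentially the same approach as the paper, whose entire proof is ``Similar to Lemma \ref{L:ActionOfai}'' together with Figure \ref{F:Xi}(iv): like you, the authors rely on the observations that crossings inner (resp.\ exterior) to the round curve fix it --- reducing case (iii) to $j_0=m_I+k_I$ exactly as you do --- and then read the standardization by $\xi'_I$ off the picture. Your extra touches are also sound: the clause ``where $I'=\{i+1\mid i\in I\}$'' in (ii) is indeed a vestigial slip (the curve is unchanged), and the rotation symmetry $\sigma_i\mapsto\sigma_{n+1-i}$ exchanging $b_{j_0}$ with $a_{n+1-j_0}$ is a legitimate consistency check, though the paper does not invoke it.
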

\begin{proof}
Similar to Lemma \ref{L:ActionOfai}. An example of (iii) is depicted in Figure \ref{F:Xi}(iv).
\end{proof}

\begin{remark}\label{R:Pure}\rm
We note that $\xi_I$ has its first strand straight and $\xi'_I$ has its last strand straight.
\end{remark}

 \subsection{Hyperbolicity for some graphs of curves}\label{S:Vokes}
In this section, we present a specialization of a theorem of Kate Vokes, which we will use as a criterion for proving the hyperbolicity of some subgraphs of the curve graph of the punctured disk. 
Consider the $n$-times punctured disk $\mathbb D_n$. 
A \emph{subsurface} of $\mathbb D_n$ is (the isotopy class of) a connected subsurface~$X$ of $\mathbb D_n$ so that every boundary component of $X$ is either $\partial \mathbb D_n$ or an essential curve in~$\mathbb D_{n}$. 
A simple closed curve in $X$ is \emph{essential} (in $X$) if it cannot be isotoped in $X$ to a point, a puncture or a boundary component of $X$. 
By an \emph{annulus} in $\mathbb D_n$ we mean the subsurface consisting of a tubular neighborhood of some essential curve in $\mathbb D_n$. Given a curve~$\mathcal C$ in $\mathbb D_n$ and a subsurface~$X$ of~$\mathbb D_n$ we say that $\mathcal C$ and $X$ are \emph{disjoint} if they admit disjoint representatives; $X$ is said to be a \emph{witness} for $\mathcal C$ if $\mathcal C$ and $X$ are not disjoint. In particular, $X$ is not a witness for any of its boundary components. Two subsurfaces are \emph{disjoint} if they admit disjoint representatives.

\begin{theorem}\cite[Corollary 1.5]{vokes}\label{T:Vokes}
Let $\hat{\mathcal C}$ be a family of curves in $\mathbb D_n$; let $\mathcal K$ be the subgraph of $\mathcal{CG}(\mathbb D_n)$ induced by $\hat{\mathcal C}$, equipped with the combinatorial metric $d_{\mathcal K}$ (each edge has length one). 
Let~$\mathcal X$ be the set of witnesses for $\mathcal K$, that is the set of all subsurfaces of $\mathbb D_n$ which are a witness for every element of $\hat{\mathcal C}$. 
Suppose: 
\begin{itemize}
\item[(i)] $\mathcal K$ is connected, 
\item[(ii)] The action of $PA_{A_{n-1}}$ on $\mathbb D_{n}$ induces an isometric action of $PA_{A_{n-1}}$ on $\mathcal K$.  
\item[(iii)] $\mathcal X$ contains no annulus.
\item [(iv)] No two elements of $\mathcal X$ are disjoint.  
\end{itemize}
Then $\mathcal K$ is hyperbolic. 
\end{theorem}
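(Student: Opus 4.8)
The plan is to recognize $\mathcal K$ as a hierarchically hyperbolic space (HHS) in the sense of Behrstock--Hagen--Sisto \cite{BHS1, BHS2} whose index set is the collection of witnesses $\mathcal X$, and then to invoke the structural fact that an HHS whose orthogonality relation is empty is Gromov hyperbolic. This is essentially the method underlying \cite{vokes}. Hypotheses (iii) and (iv) are exactly what is needed to make the HHS structure degenerate to this ``rank one'' situation, while (i) and (ii) supply connectivity and enough equivariance to run the coarse-geometric estimates.

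First I would set up the projection machinery. For each witness $W \in \mathcal X$ there is the Masur--Minsky subsurface projection $\pi_W$ sending a curve that intersects $W$ to a uniformly bounded-diameter subset of the curve graph $\mathcal{CG}(W)$; by the definition of a witness, every vertex of $\mathcal K$ (i.e. every element of $\hat{\mathcal C}$) meets $W$, so $\pi_W$ is defined on all of $\mathcal K$. I would take the index set to be $\mathcal X$, with nesting $\sqsubseteq$ given by subsurface containment, orthogonality $\perp$ given by admitting disjoint representatives, and transversality otherwise; the coordinate space attached to $W$ is $\mathcal{CG}(W)$, which is $\delta$-hyperbolic with $\delta$ independent of $W$ by the uniform hyperbolicity of curve graphs. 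The classical properties of subsurface projections --- the Behrstock inequality, bounded geodesic image, and the large-link and consistency estimates --- then verify the HHS axioms, the isometric action of hypothesis (ii) providing the required automorphisms and coarse surjectivity onto each $\mathcal{CG}(W)$.

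The decisive step is the translation of the two hypotheses on $\mathcal X$. Condition (iv), that no two witnesses are disjoint, says precisely that the orthogonality relation $\perp$ on the index set is empty: there are no non-nested pairs of orthogonal domains, hence no nontrivial standard product regions. Condition (iii), that $\mathcal X$ contains no annulus, guarantees that every coordinate space is the curve graph of a non-annular subsurface, so that bounded geodesic image applies uniformly and there are no annular ``twisting'' coordinates to spoil hyperbolicity; it also keeps the distance formula $d_{\mathcal K}(x,y) \asymp \sum_{W \in \mathcal X}\big[\, d_W(x,y)\,\big]_K$ (summing thresholded projection distances over witnesses) honest. With $\perp=\emptyset$ the HHS has rank one, and the structure theorem of \cite{BHS1, BHS2} --- an HHS with empty orthogonality relation is hyperbolic --- yields that $\mathcal K$ is hyperbolic.

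The main obstacle is verifying the HHS axioms for this specific, sparse family $\hat{\mathcal C}$ rather than for the whole mapping class group: one must show that the curves of $\hat{\mathcal C}$ still fill each witness so that $\pi_W$ is coarsely onto $\mathcal{CG}(W)$, and establish the distance formula over $\mathcal X$ --- i.e. that the witnesses genuinely capture distance in $\mathcal K$ --- which is the technical heart and requires a surgery argument keeping a tightened path inside $\hat{\mathcal C}$ while controlling its subsurface projections. An alternative that sidesteps part of the HHS bookkeeping is to run Bowditch's guessing-geodesics criterion directly: for each pair of vertices of $\mathcal K$ exhibit a family of candidate paths whose projections to the witnesses are uniformly controlled, and check the connectedness and thin-triangle conditions using (i) and (iv), the absence of orthogonality being again what forces the candidate paths to behave like genuine quasigeodesics.
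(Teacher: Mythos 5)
Your proposal is correct in outline, but note that the paper itself offers no proof of this statement: Theorem \ref{T:Vokes} is imported verbatim from \cite[Corollary 1.5]{vokes}, and the only argument the paper supplies is Remark \ref{R:Vokes}, a hypothesis-matching exercise showing that conditions (i)--(iv) say exactly that $\mathcal K$ is a \emph{twist-free multicurve graph} in the sense of \cite[Definition 2.1]{vokes} with no two disjoint witnesses, modulo two adjustments: punctures are treated as boundary components (via \cite[Section 2.3]{MasurMinsky2}), and the braid group is replaced by the pure braid group $PA_{A_{n-1}}$ because mapping classes in \cite{vokes} must fix the boundary pointwise. What you sketch instead is the internal mechanism of Vokes's own proof: endow $\mathcal K$ with a hierarchically hyperbolic structure whose index set is $\mathcal X$, with nesting given by containment and orthogonality by disjointness, observe that (iv) empties the orthogonality relation, and invoke the fact from \cite{BHS1,BHS2} that an HHS in which no two domains with unbounded coordinate spaces are orthogonal is hyperbolic. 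So your route is faithful to the cited source rather than to the paper; it buys an explanation of \emph{why} the theorem is true, at the cost --- which you honestly flag --- that the entire technical content (verifying the HHS axioms for the induced subgraph on $\hat{\mathcal C}$, above all the realization and distance-formula arguments over $\mathcal X$) is precisely the body of \cite{vokes} and is not carried out, so as a standalone argument your text is a program rather than a demonstration. Two local corrections: the bounded geodesic image theorem is valid for annular subsurfaces as well, so hypothesis (iii) is not needed ``to make BGI apply''; its actual role in \cite{vokes} is as one of the defining axioms of a twist-free multicurve graph, excluding twisting directions that a graph built on disjointness cannot register. Also, the hyperbolicity criterion should be phrased via the bounded domain dichotomy rather than ``rank one'' per se, though with orthogonality literally empty, as (iv) gives here, the needed statement holds trivially; and coarse surjectivity of $\pi_W$ on $\hat{\mathcal C}$ does follow from (ii) as you suggest, since the $PA_{A_{n-1}}$-orbit of any vertex of $\mathcal K$ projects coarsely onto the curve graph of each witness $W$.
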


%\begin{remark}\rm
%Clauses (2) and (4) of  \cite[Definition 2.1]{vokes} are automatically satisfied under our hypothesis; namely, each vertex of $\mathcal K$ being a curve in $\mathbb D_{n}$ is in particular a multicurve and each pair of adjacent vertices in $\mathcal K$ are disjoint curves. In (ii), which is the adaptation of Clause (3) of \cite[Definition 2.1]{vokes}, we need to restrict attention to pure braids since mapping classes in \cite{vokes} are required to fix the boundary pointwise. 
%ctually, each puncture in~$\mathbb D_{n}$ should be treated as a boundary component and this is why we have to consider pure braids in~(ii). 
%However, as noted in \cite[Section 2.3]{MasurMinsky2}, both points of view are equivalent and it will be more convenient for us to maintain the difference between punctures of the disk and ``real'' boundaries, thinking of punctures as ``distinguished boundary components''. 
%\end{remark}

\begin{remark}\rm \label{R:Vokes}
Let us check that the hypothesis of Theorem \ref{T:Vokes} match the hypothesis of \cite[Corollary 1.5]{vokes}, namely that $\mathcal K$ is a \emph{twist-free multicurve graph} having no pair of disjoint witnesses. The second half is exactly our clause (iv). 
 The definition of a twist-free multicurve graph (\cite[Definition 2.1]{vokes}) consists of clauses~{(1)-(5).} 
To see clauses (2) and (4), observe that each vertex of~$\mathcal K$ being a curve in $\mathbb D_{n}$ is in particular a multicurve and that two adjacent vertices in $\mathcal K$ are disjoint curves. 
Clauses (1) and (5) correspond to~(i) and (iii) of Theorem \ref{T:Vokes}, respectively.
Clause (3) is adapted into clause (ii) of Theorem \ref{T:Vokes}.
A priori, the results in \cite{vokes} work for \emph{compact} surfaces (possibly with boundary). However, according to \cite[Section 2.3]{MasurMinsky2}, punctures can be treated as boundary components, so the results of \cite{vokes} apply to punctured surfaces as well. In the case of the punctured disc, we have to replace the whole braid group by the pure braid group since mapping classes in~\cite{vokes} are required to fix the boundary pointwise. 
In the sequel we will find it more convenient to maintain the difference between punctures of the disk and ``real'' boundaries, thinking of punctures as ``distinguished boundary components''. 
\end{remark}

\section{The graph $\mathcal C_{parab}(B_n)$}\label{S:B}

A proper irreducible standard parabolic subgroup of $A_{B_n}$ is determined by a proper 
subinterval of~$[n]$: for any proper subinterval $I$ of $[n]$, we denote by $B_I$ the proper irreducible standard parabolic subgroup of $A_{B_n}$ generated by $\{\tau_i\ |\  i\in I\}$. 
%{\color{red} Notice that in $A_{B_n}$, by Lemma \ref{L:Corollary}, the standard generators fall into two conjugacy classes, namely each $\tau_i,$ $i\geqslant 2$ is conjugate to each other and not conjugate to $\tau_1$. \marginpar{Useful??}}
%In view of Lemma \ref{L:ConjugacyParabolic}, each proper irreducible parabolic subgroup~$P$ of~$A_{B_n}$ is exactly one of the following types: 
%\begin{itemize}
%\item Type A if $P$ is conjugate to $B_I$ for $I\subset \{2,\ldots,n\}$, 
%\item Type B if $P$ is conjugate to $B_I$ with $1\in I$. 
%\end{itemize}

There is a monomorphism 
\begin{align*}
    \eta_n :   A_{B_n} & \longrightarrow A_{A_n} \\
          \tau_i & \longmapsto \begin{cases}  \sigma_1^2 & \text{if $i=1$,}\\
                                               \sigma_i  & \text{if $2\leqslant i\leqslant n$.}
\end{cases}
\end{align*}
The image of $\eta_n$ is the subgroup $\mathfrak P_1$ of $(n+1)$-strands \emph{1-pure braids}, that is the subgroup of all $(n+1)$-strands braids in which the first strand ends in the first position. In other words, a braid~$y$ on $(n+1)$ strands belongs to $\mathfrak P_1$ if and only if $\pi_y(1)=1$, where $\pi_y=\pi(y)$ is the permutation in $\mathfrak S_{n+1}=W_{A_n}$ associated to $y$. 
A presentation for $\mathfrak P_1$ was given by Wei-Liang Chow \cite{Chow} in 1948; for a proof that $\eta_n$ defines an isomorphism between $A_{B_n}$ and $\mathfrak P_1$, the reader may consult \cite{Peifer}.

For the rest of this section, given a proper subinterval $I$ of $[n]$, we shall denote $m_I=\min (I)$ and~$k_I=\# I$. 
The central elements $z_{A_I}$ of $A_I$ and $z_{B_I}$ of $B_I$ are given by the following formulae (see \cite[Lemmas 3.1 and 4.1]{Paris2}):

\begin{center}
\begin{tabular}{ccll}
$z_{A_I}$ &
 = &
$\left\{  
  \begin{tabular}{l} 
  
 $\sigma_{m_I}$ \\
$\left((\sigma_{m_I}\ldots \sigma_{m_I+k_I-1})\ldots (\sigma_{m_I}\sigma_{m_I+1})\sigma_{m_I}\right)^2$
\end{tabular}
 \right. $    
 
 & \begin{tabular}{l} 
  
 if $k_I=1$, \\
if $k_I\geqslant 2$,
\end{tabular}
 
 \\
 
  & & & \\

$z_{B_I}$ & = & 
$\left\{  
  \begin{tabular}{l} 
  
 $\tau_{m_I}$\\
 $\left((\tau_{m_I}\ldots \tau_{m_I+k_I-1})\ldots (\tau_{m_I}\tau_{m_I+1})\tau_{m_I}\right)^2$\\
 $(\tau_{k_I}\ldots \tau_2\tau_1\tau_2\ldots\tau_{k_I})\ldots (\tau_2\tau_1\tau_2) \tau_1 $
\end{tabular}
 \right. $

 & \begin{tabular}{l} 
  
if $k_I=1$, \\
if $k_I\geqslant 2$ and $1\notin I$,\\
if $k_I\geqslant 2$ and $1\in I$.
\end{tabular}
 
\end{tabular}
\end{center}

The proof of the next lemma follows from an easy computation left to the reader. 

\begin{lemma}\label{L:Comput}
Let $I$ be a proper subinterval of $[n]$. We have 
\begin{itemize}
\item[(i)] $\eta_n(B_I)=A_I\cap \mathfrak P_1$.
\item[(ii)] $\eta_n(z_{B_I})=z_{A_I}$, except if $I=\{1\}$, in which case $\eta_n(z_{B_I})=z_{A_I}^2$. 
\end{itemize}
\end{lemma}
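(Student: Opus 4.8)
The plan is to treat the two assertions in turn and, in each, to split according to whether $1\in I$ or $1\notin I$, exploiting that $\eta_n$ is uniform ($\tau_i\mapsto\sigma_i$) away from the first generator and distorts only $\tau_1$.

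For (i), the inclusion $\eta_n(B_I)\subseteq A_I\cap\mathfrak P_1$ is automatic: each generator $\eta_n(\tau_i)$ with $i\in I$ lies in $A_I$ (namely $\eta_n(\tau_1)=\sigma_1^2\in A_I$ if $1\in I$, and $\eta_n(\tau_i)=\sigma_i\in A_I$ for $i\geqslant 2$), while the whole image of $\eta_n$ is $\mathfrak P_1$. For the reverse inclusion I would argue by cases. If $1\notin I$ then $m_I\geqslant 2$, every $\sigma_i$ with $i\in I$ fixes the first strand, so $A_I\subseteq\mathfrak P_1$ and $\eta_n(B_I)=\langle\sigma_i : i\in I\rangle=A_I=A_I\cap\mathfrak P_1$. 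If $1\in I$, then $I=[1,k_I]$ and $A_I$ is exactly the braid group carried by the first $k_I+1$ strands; a braid in $A_I$ is $1$-pure on $n+1$ strands iff it is $1$-pure on $k_I+1$ strands, so $A_I\cap\mathfrak P_1$ is the group of $1$-pure braids on $k_I+1$ strands. By the Chow--Peifer isomorphism theorem applied to $\eta_{k_I}$, this group is generated by $\sigma_1^2,\sigma_2,\dots,\sigma_{k_I}$, i.e.\ precisely by $\eta_n(\tau_1),\dots,\eta_n(\tau_{k_I})$, whence $A_I\cap\mathfrak P_1\subseteq\eta_n(B_I)$.

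For (ii), the two ``generic'' situations are immediate substitutions into the displayed formulas: when $1\notin I$ (be it $k_I=1$ or $k_I\geqslant 2$) the word defining $z_{B_I}$ involves only $\tau_i$ with $i\geqslant 2$, so $\eta_n$ carries it letter-for-letter onto the word defining $z_{A_I}$; and for $I=\{1\}$ one has $z_{B_I}=\tau_1\mapsto\sigma_1^2=z_{A_I}^2$, the announced exception. The remaining case $1\in I$ with $k_I\geqslant 2$ (so $I=[1,k_I]$) is where the formulas genuinely differ, and this is the main obstacle.

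I would dispatch this last case \emph{without} grinding out a braid word identity, by reusing (i) together with centrality. Write $k=k_I$. By its displayed formula $z_{A_I}$ is the full twist on the first $k+1$ strands, which generates the center of $A_I\cong A_{A_k}$. Being a pure braid, $z_{A_I}$ lies in $\mathfrak P_1$, hence in $A_I\cap\mathfrak P_1=\eta_n(B_I)$ by~(i); and as a central element of $A_I$ it commutes with every $\eta_n(b)$, $b\in B_I$, all of which lie in $\eta_n(B_I)$, so pulling back through the isomorphism $\eta_n|_{B_I}$ shows $\eta_n^{-1}(z_{A_I})$ is central in $B_I\cong A_{B_k}$. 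Since $B_k$ is irreducible of spherical type for $k\geqslant 2$, that center is infinite cyclic generated by $z_{B_I}$, so $z_{A_I}=\eta_n(z_{B_I})^m$ for some $m\in\mathbb Z$. A direct count pins down $m$: each palindromic factor $\tau_j\cdots\tau_1\cdots\tau_j$ of $z_{B_I}$ contains a single $\tau_1$ (contributing $2$ after $\eta_n$) and $2(j-1)$ further letters, so $\epsilon_{A_n}(\eta_n(z_{B_I}))=\sum_{j=1}^{k}2j=k(k+1)$, whereas $\epsilon_{A_n}(z_{A_I})=2\binom{k+1}{2}=k(k+1)$; both positive and equal, this forces $m=1$ and hence $\eta_n(z_{B_I})=z_{A_I}$. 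The only delicacy is making the centrality step rigorous --- that $z_{A_I}$ generates the center of $A_I$ and is $1$-pure, and that $z_{B_I}$ generates the center of $B_I$ --- after which the exponent-sum comparison is routine.
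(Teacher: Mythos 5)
Your proof is correct, and it is more detailed than what the paper offers: the paper dispatches this lemma with ``an easy computation left to the reader'', the intended route being a direct verification --- for (i), that the generators match up (with the Chow--Peifer description of $\mathfrak P_1$ handling the inclusion $A_I\cap\mathfrak P_1\subseteq\eta_n(B_I)$ when $1\in I$), and for (ii), a word-by-word check that the image of the displayed type-$B$ central word equals the displayed type-$A$ central word, including the identity expressing $\eta_n(z_{B_{[1,k]}})$ as the full twist on the first $k+1$ strands. Your handling of the generic cases ($1\notin I$, and $I=\{1\}$) coincides with that letter-for-letter substitution, but in the one genuinely nontrivial case ($1\in I$, $k_I\geqslant 2$) you take a different, structural route: $z_{A_I}$ is pure, hence lies in $A_I\cap\mathfrak P_1=\eta_n(B_I)$ by part (i); it is central in $A_I$, hence pulls back through the injective $\eta_n$ to a central element of $B_I\cong A_{B_{k_I}}$; and since that center is infinite cyclic generated by $z_{B_I}$, the exponent-sum computation (both sides give $k(k+1)$, using that $\epsilon_{A_n}$ is a homomorphism) forces the power to be $1$. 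This buys you freedom from grinding out the braid-word identity, at the modest cost of invoking part (i) (itself resting on Peifer's theorem \cite{Peifer} in rank $k_I$) plus the facts that the center of $A_I\cong A_{A_{k}}$ for $k\geqslant 2$ is generated by the full twist $z_{A_I}$ \cite[Theorem 7.2]{BS} and that $z_{B_I}$ generates the center of $B_I$ (which holds by the paper's very definition of the central element of an irreducible spherical-type parabolic). The two delicacies you flag are exactly the right ones and are covered by those citations; the only other point worth making explicit is the degenerate instance $k_I=1$ inside the case $1\in I$ of part (i), where the rank-one version of Chow--Peifer reduces to the immediate observation that the $1$-pure braids on two strands form $\langle\sigma_1^2\rangle$.
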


%\begin{proof}
%Suppose first that $1\notin I$. If $I=\{m\}$ has only one element $m\geqslant 2$, then $\eta(c_{B,I})=\eta(\tau_m)=\sigma_m=c_{A,I}$. If $I$ has more than one element, $\eta(c_{B,I})=\eta(\Delta_{B,I}^2)=\Delta_{A,I}^2=c_{A,I}$. 
%
%If $1\in I$ and $I$ has more than one element, $\eta(c_{B,I})=\eta(\Delta_{B,I})=\sigma_1^2(\sigma_2\sigma_1^2\sigma_2)\ldots (\sigma_m\ldots \sigma_2\sigma_1^2\sigma_2\ldots \sigma_m)=\Delta_{A,I}^2=c_{A,I}^2$. Finally, 
%\end{proof}

\begin{proposition}\label{P:WellDef}
Let $I,J$ be proper subintervals of $[n]$ and let $g\in A_{B_n}$. 
The following are equivalent: 
\begin{itemize}
\item[(i)] $B_I^g=B_J $,
\item[(ii)]$\mathcal C_I^{\eta_n(g)}=\mathcal C_J$. 
\end{itemize}
\end{proposition}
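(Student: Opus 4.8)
The plan is to prove the two implications separately, using Proposition~\ref{P:Center} on the $A_{A_n}$-side to reduce everything to a statement about central elements, and using Lemma~\ref{L:Comput} to transport central elements across the embedding $\eta_n$. The guiding principle is that $\eta_n$ is injective, that $\eta_n(B_I) = A_I \cap \mathfrak{P}_1$ sits inside $A_I$, and that the central element of $B_I$ is sent (up to a harmless square in the exceptional case $I=\{1\}$) to the central element of $A_I$. Since $g \in A_{B_n}$, we have $\eta_n(g) \in \mathfrak{P}_1 \subset A_{A_n}$, so the $A_{A_n}$-machinery of Section~\ref{S:Correspondence} applies to $\eta_n(g)$, and by the correspondence $\mathfrak{f}_n$ the condition $\mathcal{C}_I^{\eta_n(g)} = \mathcal{C}_J$ is equivalent to $A_I^{\eta_n(g)} = A_J$.

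For the implication (i) $\Rightarrow$ (ii), suppose $B_I^g = B_J$. Applying the monomorphism $\eta_n$ and using Lemma~\ref{L:Comput}(i), I would first argue that $\eta_n(B_I^g) = \eta_n(B_I)^{\eta_n(g)} = (A_I \cap \mathfrak{P}_1)^{\eta_n(g)}$ equals $A_J \cap \mathfrak{P}_1$. The cleaner route, however, is via central elements: from $B_I^g = B_J$ one gets $z_{B_I}^g = z_{B_J}$ (the central element is determined by the subgroup), hence $\eta_n(z_{B_I})^{\eta_n(g)} = \eta_n(z_{B_J})$. By Lemma~\ref{L:Comput}(ii) this reads $z_{A_I}^{\eta_n(g)} = z_{A_J}$ (the squares in the exceptional $I=\{1\}$ case match up on both sides, so taking roots/accounting for the exponent is immediate since $A_{A_n}$ is torsion-free and central elements of parabolics have a well-defined positive generator). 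Proposition~\ref{P:Center}, applied in $A_{A_n}$ with the two irreducible parabolic subgroups $A_I$ and $A_J$ and the element $\eta_n(g)$, then yields $A_I^{\eta_n(g)} = A_J$, which via $\mathfrak{f}_n$ gives $\mathcal{C}_I^{\eta_n(g)} = \mathcal{C}_J$.

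For the converse (ii) $\Rightarrow$ (i), I would run the argument backwards: $\mathcal{C}_I^{\eta_n(g)} = \mathcal{C}_J$ gives $A_I^{\eta_n(g)} = A_J$, hence $z_{A_I}^{\eta_n(g)} = z_{A_J}$ by Proposition~\ref{P:Center}. Using Lemma~\ref{L:Comput}(ii) to rewrite $z_{A_I} = \eta_n(z_{B_I})$ (up to the square) and likewise for $J$, I obtain $\eta_n(z_{B_I})^{\eta_n(g)} = \eta_n(z_{B_J})$, i.e. $\eta_n(z_{B_I}^{\,g}) = \eta_n(z_{B_J})$; injectivity of $\eta_n$ then gives $z_{B_I}^{\,g} = z_{B_J}$, and a final application of Proposition~\ref{P:Center} inside $A_{B_n}$ (valid since $A_{B_n}$ is of spherical type and $B_I$, $B_J$ are irreducible parabolics) concludes $B_I^g = B_J$.

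The main obstacle I anticipate is the exceptional case $I = \{1\}$ (and symmetrically $J = \{1\}$), where $\eta_n(z_{B_I}) = z_{A_I}^2$ rather than $z_{A_I}$. Here I must be careful that the equality of squares $(z_{A_I}^2)^{\eta_n(g)} = z_{A_J}^{?}$ still forces the correct relation between the unsquared central elements and that the cardinalities $\#I = \#J$ are forced to agree (a conjugacy $B_I^g = B_J$ between standard parabolics forces $I$ and $J$ to have the same size, so the exceptional case occurs on one side if and only if it occurs on the other). The resolution is that $A_{A_n}$ is torsion-free and the positive generator of a cyclic central subgroup is uniquely determined, so passing between $z$ and $z^2$ is reversible without ambiguity; I would state this explicitly rather than sweep it into the ``easy computation'' of Lemma~\ref{L:Comput}.
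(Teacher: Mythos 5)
Your overall strategy is the same as the paper's (transport central elements through $\eta_n$ via Lemma~\ref{L:Comput}(ii) and apply Proposition~\ref{P:Center} on each side), but your treatment of the exceptional case $I=\{1\}$ contains a genuine gap. You claim that ``passing between $z$ and $z^2$ is reversible'' because $A_{A_n}$ is torsion-free and positive generators of cyclic central subgroups are unique. Torsion-freeness does not give unique square roots in braid groups: in $A_{A_2}$ the element $\Delta^2$ is central, so every conjugate $\Delta^x$ satisfies $(\Delta^x)^2=\Delta^2$, and $\Delta$ is not central, so $\Delta^2$ has infinitely many distinct square roots --- all conjugate to one another, so even knowing that your two roots $\sigma_1^{\eta_n(g)}$ and $\sigma_1$ are conjugate does not rescue the argument. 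The implication you actually need in the direction (i)~$\Rightarrow$~(ii), namely that $(\sigma_1^2)^{\eta_n(g)}=\sigma_1^2$ forces $\sigma_1^{\eta_n(g)}=\sigma_1$, is a nontrivial theorem about braid groups: the centralizers of $\sigma_1$ and $\sigma_1^2$ in $A_{A_n}$ coincide \cite[Theorem 2.2]{Zhu}, and this is exactly what the paper invokes. (Incidentally, even the uniqueness of the square root of $\sigma_1^2$ is proved \emph{using} that centralizer theorem: if $x^2=\sigma_1^2$ then $x$ centralizes $\sigma_1^2$, hence centralizes $\sigma_1$ by \cite{Zhu}, and only then does torsion-freeness give $(x\sigma_1^{-1})^2=1$, so $x=\sigma_1$.) Note that in the converse direction (ii)~$\Rightarrow$~(i) no such input is needed, since there one squares rather than extracts roots; your proof of that direction is fine.

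A secondary inaccuracy: you justify ``the exceptional case occurs on one side if and only if it occurs on the other'' by saying that $B_I^g=B_J$ forces $\#I=\#J$. Equal cardinality does not rule out, say, $I=\{1\}$ and $J=\{3\}$, and if that mixed case occurred your central-element bookkeeping would break ($\eta_n(z_{B_I})=z_{A_I}^2$ but $\eta_n(z_{B_J})=z_{A_J}$). The paper excludes it via Lemma~\ref{L:ConjugacyParabolic} together with Lemma~\ref{L:Corollary}: a conjugacy of standard parabolics forces the generating sets to be conjugate, and $\tau_1$ is conjugate to no other $\tau_j$ because the only path out of $\tau_1$ in the Coxeter graph $B_n$ starts with the edge labeled $4$, which is even. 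Both gaps are fillable with the right citations, but as written the proof does not go through.
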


\begin{proof} 
Note that using the isomorphism $\mathfrak f_n$ from Section \ref{S:Correspondence}, (ii) is equivalent to $A_I^{\eta_n(g)}=A_J$.

Assume (i). 
According to Proposition \ref{P:Center}, we have $z_{B_I}^g=z_{B_J}$. Assume first that $I=\{1\}$ (hence $J=\{1\}$, by Lemmas \ref{L:ConjugacyParabolic} and \ref{L:Corollary}) so that $\tau_1^g=\tau_1$, which yields $({\sigma_1^2})^{\eta_n(g)}={\sigma_1^2}$ after applying the monomorphism $\eta_n$. Then \cite[Theorem 2.2]{Zhu} ensures that also $\sigma_1^{\eta_n(g)}=\sigma_1$ whence $A_I^{\eta_n(g)}=A_J$. If on the contrary $I,J\neq\{1\}$, Lemma \ref{L:Comput}(ii) yields that $z_{A_I}^{\eta_n(g)}=z_{A_J}$ from which $A_I^{\eta_n(g)}=A_J$ follows using Proposition~\ref{P:Center}.

Conversely, assume (ii). By Proposition \ref{P:Center}, we get $z_{A_I}^{\eta_n(g)}=z_{A_J}$. By Lemma \ref{L:ConjugacyParabolic}, again $I=\{1\}$ if and only if $J=\{1\}$, as $\eta_n(g)$ is 1-pure. In this case it follows that $({\sigma_1^2})^{\eta_n(g)}={\sigma_1^2}$; as $\eta_n$ is injective, we get $\tau_1^g=\tau_1$, which is to say $B_I^g=B_J$. If $I\neq\{1\}$, from the relation~$z_{A_I}^{\eta_n(g)}=z_{A_J}$ and using Lemma \ref{L:Comput}(ii) we get $z_{B_I}^g=z_{B_J}$ which, by Proposition~\ref{P:Center}, implies~(i). 
\end{proof}

\begin{proposition}\label{P:AdjB}
Let $I,J$ be proper subintervals of $[n]$ and let $g\in A_{B_n}$. 
The following are equivalent: 
\begin{itemize}
\item[(i)] $B_I^g$ and $B_J $ are adjacent in $\mathcal C_{parab}(B_n)$,
\item[(ii)] $\mathcal C_I^{\eta_n(g)}$ and $\mathcal C_J$ are adjacent in $\mathcal{CG}(\mathbb D_{n+1})$.
\end{itemize}
\end{proposition}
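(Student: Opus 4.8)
The plan is to translate both adjacency conditions into commutation relations between the canonical central elements, transport these across the monomorphism $\eta_n$, and then absorb the discrepancy between $\eta_n(z_{B_I})$ and $z_{A_I}$ recorded in Lemma~\ref{L:Comput}(ii).

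First I would set $h=\eta_n(g)$ and use that both $A_{B_n}$ and $A_{A_n}$ are of spherical type, so that Proposition~\ref{P:Adjacent} applies on both sides. Writing $z_{B_I^g}=z_{B_I}^{\,g}$ (the canonical generator of the centre of a conjugate is the conjugate of the canonical generator, since conjugation preserves the exponent sum $\epsilon_\Gamma$), condition (i) says exactly that $B_I^g\neq B_J$ and that $z_{B_I}^{\,g}$ commutes with $z_{B_J}$. Applying the isomorphism $\mathfrak f_n$ of Section~\ref{S:Correspondence} to (ii), the curves $\mathcal C_I^{h}$ and $\mathcal C_J$ become the parabolic subgroups $A_I^{h}$ and $A_J$, so (ii) says that $A_I^{h}\neq A_J$ and that $z_{A_I}^{\,h}$ commutes with $z_{A_J}$. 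The two inequality clauses are equivalent by Proposition~\ref{P:WellDef}, so the statement reduces to the equivalence
$$z_{B_I}^{\,g}\ \text{and}\ z_{B_J}\ \text{commute}\quad\Longleftrightarrow\quad z_{A_I}^{\,h}\ \text{and}\ z_{A_J}\ \text{commute}.$$

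Next, since $\eta_n$ is injective, the left-hand commutation is equivalent to the commutation of $\eta_n(z_{B_I})^{h}$ and $\eta_n(z_{B_J})$ inside $A_{A_n}$. Lemma~\ref{L:Comput}(ii) then lets me rewrite these elements as $(z_{A_I}^{\,h})^{\alpha}$ and $(z_{A_J})^{\beta}$, where each exponent is $2$ when the corresponding index set equals $\{1\}$ and is $1$ otherwise. Thus everything comes down to one claim about the curve/parabolic correspondence: for the central elements $z_P,z_Q$ of any two proper irreducible parabolic subgroups of $A_{A_n}$ and any integers $a,b\geqslant 1$, the powers $z_P^{a}$ and $z_Q^{b}$ commute if and only if $z_P$ and $z_Q$ commute, applied here with $P=A_I^{h}$ and $Q=A_J$.

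The implication from $z_P,z_Q$ commuting to their powers commuting is immediate, and the reverse is the only real obstacle. I would settle it geometrically: under $\mathfrak f_n$ the element $z_P$ is the half-twist or Dehn twist supported on $\mathcal C_P$, and likewise $z_Q$ is supported on $\mathcal C_Q$; any nonzero powers of two such twists commute in the mapping class group if and only if the geometric intersection number of $\mathcal C_P$ and $\mathcal C_Q$ vanishes, i.e. if and only if the curves are disjoint or equal. Since the paper already records that $z_P$ and $z_Q$ themselves commute exactly when $\mathcal C_P$ and $\mathcal C_Q$ are disjoint, both conditions amount to the same purely geometric statement, which does not see the exponents $a,b$. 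The only borderline is $\mathcal C_P=\mathcal C_Q$, where $z_P=z_Q$ and all powers trivially commute; this case corresponds to $A_I^{h}=A_J$ and is in any event removed by the distinctness clause via Proposition~\ref{P:WellDef}. Chaining the three reductions gives the desired equivalence of (i) and (ii).
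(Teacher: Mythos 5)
Your proposal is correct, and its skeleton is exactly the paper's: apply Proposition~\ref{P:Adjacent} on both sides to convert adjacency into commutation of central elements, transport across $\eta_n$ using its injectivity, and absorb the exponent discrepancy of Lemma~\ref{L:Comput}(ii). The one genuine divergence is how that discrepancy is absorbed. The paper settles it with a single algebraic citation: by \cite[Theorem 2.2]{Zhu}, $\sigma_1$ and $\sigma_1^2$ have the same centralizer in $A_{A_n}$, which is precisely the only case ($I=\{1\}$ or $J=\{1\}$) where $\eta_n(z_{B_I})=z_{A_I}^2$ rather than $z_{A_I}$. You instead prove the more general statement that $z_P^a$ and $z_Q^b$ commute (for $a,b\geqslant 1$) if and only if $z_P$ and $z_Q$ do, via the geometric fact that nonzero powers of (half-)Dehn twists commute exactly when the supporting curves have zero geometric intersection. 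This is valid, and it has the advantage of being uniform in the exponents rather than tied to the special element $\sigma_1$; but note that the intersection-number criterion is standard only for Dehn twists, so for the half-twist case you should make explicit the reduction by squaring ($z_P^a$ and $z_Q^b$ commuting implies $z_P^{2a}$ and $z_Q^{2b}$ commuting, and these are Dehn twist powers), or simply cite the same centralizer result the paper uses, which covers exactly the needed case. A minor point in your favour: you handle the distinctness clause of Definition~\ref{D:CParab} explicitly via Proposition~\ref{P:WellDef}, which the paper leaves implicit, and your observation that $z_{B_I^g}=z_{B_I}^g$ (via Proposition~\ref{P:Center}, or conjugation-invariance of the exponent sum) is the correct justification for a step the paper also uses silently.
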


\begin{proof} 
By Proposition \ref{P:Adjacent}, (i) is equivalent to saying that $z_{B_I}^g$ and $z_{B_J}$ commute; by 
Lemma~\ref{L:Comput}(ii) (and injectivity of $\eta_n$), this is equivalent to $z_{A_I}^{\eta_n(g)}$ and $z_{A_J}$ commuting (notice that~$\sigma_1^2$ and~$\sigma_1$ have the same centralizer in $A_{A_n}$ by \cite[Theorem 2.2]{Zhu}). Using Proposition \ref{P:Adjacent} again, this is equivalent in turn to $A_I^{\eta_n(g)}$ and $A_J$ being adjacent in $\mathcal C_{parab}(A_n)$. Using the isomorphism $\mathfrak f_n^{-1}$ from Section~\ref{S:Correspondence}, this is also equivalent to (ii).
\end{proof}

Our next goal is to show that each curve $\mathcal C$ in $\mathbb D_{n+1}$ can be written $\mathcal C= \mathcal C_I^{\eta_n(g)}$ for some proper subinterval $I$ of $[n]$ and some $g\in A_{B_{n}}$. 
 Observe that $\mathfrak P_1$ has index $(n+1)$ in $A_{A_n}$. 
 The braids~$a_i$ introduced in Section \ref{S:Correspondence} enumerate the cosets of $\mathfrak P_1$: 
%Two braids $y_1,y_2$ are in the same right-coset if and only if $\pi_{y_1}(1)=\pi_{y_2}(1)$. 
given $y\in A_{A_n}$, there is a unique $i\in\{0,\ldots,n\}$ so that $ya_i\in \mathfrak P_1$. 

\begin{proposition}\label{P:Surj}
Let $\mathcal C$ be a curve in $\mathbb D_{n+1}$. There exists $\alpha\in \mathfrak P_1$ such that 
$\mathcal C^{\alpha}$ is standard. 
\end{proposition}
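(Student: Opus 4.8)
The plan is to use the change of coordinates principle for mapping class groups, arranged so as to keep the first strand pure.

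First I would record the starting point: by the bijection $\mathfrak f_n$ of Section \ref{S:Correspondence}, every curve $\mathcal C$ in $\mathbb D_{n+1}$ can be written $\mathcal C=\mathcal C_I^y$ for some proper subinterval $I$ of $[n]$ and some $y\in A_{A_n}$; equivalently, $\mathcal C$ encloses exactly $j$ punctures for some $2\leqslant j\leqslant n$. The goal is to replace the arbitrary braid $y$ by a $1$-pure braid, that is to produce $\alpha\in\mathfrak P_1$ with $\mathcal C^\alpha$ standard.

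The key observation is that I am free to choose the target standard curve, and I would choose it according to the position of the puncture $1$ relative to $\mathcal C$. If puncture $1$ lies in the exterior of $\mathcal C$, I aim for the standard curve $\mathcal C_{[2,j]}$, whose interior (punctures $2,\dots,j+1$) also excludes puncture $1$; if puncture $1$ lies in the interior of $\mathcal C$, I aim instead for $\mathcal C_{[1,j-1]}$, whose interior (punctures $1,\dots,j$) contains puncture $1$. In either case the chosen round curve encloses exactly $j$ punctures and places puncture $1$ on the same side as $\mathcal C$ does. Then I would invoke the change of coordinates principle: two essential curves enclosing the same number of punctures of $\mathbb D_{n+1}$ have homeomorphic complementary pieces (a $j$-punctured disk inside and an $(n+1-j)$-punctured annulus outside), so there is an orientation-preserving homeomorphism $h$ of $\mathbb D_{n+1}$, fixing $\partial\mathbb D_{n+1}$ pointwise, carrying $\mathcal C$ to the chosen standard curve. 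The principle moreover lets me prescribe the induced bijection on the punctures freely, subject only to respecting the inside/outside partition; since that partition places puncture $1$ on the correct side, I may demand $h(1)=1$. The isotopy class of $h$ is then a braid $\alpha\in A_{A_n}$ with $\pi_\alpha(1)=1$, that is $\alpha\in\mathfrak P_1$, and $\mathcal C^\alpha$ is the chosen standard curve (up to replacing $\alpha$ by $\alpha^{-1}$, which also lies in $\mathfrak P_1$, to match the right-action convention).

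The only delicate point — and the place I expect the real work to be — is checking that the change of coordinates homeomorphism can be taken simultaneously orientation-preserving, equal to the identity on $\partial\mathbb D_{n+1}$, and fixing the puncture $1$; this is precisely where the case distinction on the side of puncture $1$ is used, and it is what forces the target to be $\mathcal C_{[2,j]}$ or $\mathcal C_{[1,j-1]}$. A more computational route, closer to the machinery already set up, is also available: one writes $y$ as a product of the coset representatives $a_i$ and elements of $\mathfrak P_1$ and standardizes step by step using Lemma \ref{L:ActionOfai}(iii), whose standardizing braid $\xi_I$ lies in $\mathfrak P_1$ by Remark \ref{R:Pure}. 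I expect the bookkeeping in that alternative — tracking at each step which punctures are carried inside or outside the evolving curve — to be the genuinely delicate part, which is why I would favour the change of coordinates argument above.
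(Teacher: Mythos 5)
Your proof is correct, but it takes a genuinely different route from the paper's. The paper argues constructively with the machinery it has already set up: take any braid $\zeta$ with $\mathcal C^{\zeta}$ round, say $\mathcal C_J$, multiply on the right by the unique coset representative $a_{i_0}$ (with $i_0=\pi_{\zeta}(1)-1$) so that $\zeta a_{i_0}\in\mathfrak P_1$, and then apply Lemma \ref{L:ActionOfai}: either $\mathcal C_J^{a_{i_0}}$ is already standard, or it equals $\mathcal C_J^{a_{m-1}}$ and the explicit braid $\xi_J$, which is $1$-pure by Remark \ref{R:Pure}, standardizes it. In particular, the computational route you set aside at the end is \emph{not} an iterative, bookkeeping-heavy standardization as you anticipated --- it is a single correction step, arguably shorter than the topological argument. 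Your change of coordinates proof is sound: the one genuinely necessary idea, which you identify correctly, is that the target round curve ($\mathcal C_{[1,j-1]}$ or $\mathcal C_{[2,j]}$) can be chosen to place puncture $1$ on the same side as $\mathcal C$ does, so that a puncture bijection fixing $1$ respects the inside/outside partition; realizing it as a homeomorphism fixing $\partial\mathbb D_{n+1}$ pointwise then only needs the standard facts that the mapping class groups of the inner $j$-punctured disk and of the outer punctured annulus each surject onto the symmetric groups of their punctures, together with a gluing along the curve (adjusting by a twist), and closure of $\mathfrak P_1$ under inversion absorbs the action convention. As for what each approach buys: yours is conceptually cleaner and independent of Lemmas \ref{L:ActionOfai} and \ref{L:ActionOfbi}; the paper's explicit braids $a_{i_0}$, $\xi_I$ and their right-handed analogues $b_{j_0}$, $\xi'_I$ are, however, reused repeatedly later --- in Proposition \ref{P:CurveTildeC}, where the standardizing braid must in addition be $(n+2)$-pure, in Proposition \ref{P:EdgeOrbitTildeC} for simultaneous standardization, and in Lemma \ref{L:Technical} and Proposition \ref{P:XPB} --- so the constructive proof is an investment the paper cashes in several times, whereas the change of coordinates argument would have to be redone with extra care (two distinguished punctures, nested pairs of curves) in each of those settings.
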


\begin{proof}
Let $\zeta$ be any braid such that $\mathcal C^{\zeta}$ is a round curve, say $\mathcal C_J$, and write $m=\min(J)$ and $k=\# J$. 
Let $i_0=\pi_{\zeta}(1)-1$, in such a way that $\zeta a_{i_0}$ is 1-pure. Use Lemma \ref{L:ActionOfai}. If $i_0+1 <m$ or $i_0+1>m+k$, then $\mathcal C^{\zeta a_{i_0}}=\mathcal C_J^{a_{i_0}}$ is standard and we can take $\alpha=\zeta a_{i_0}$. 
%
%
%
%
% then $\mathcal C^{\zeta a_{i_0}}=\mathcal C_J^{a_{i_0}}=\mathcal C_J$ (Lemma~\ref{L:ActionOfai} (i)) and we can take $I=J$, $\alpha=\zeta a_{i_0}$. If $i_0+1>m+k$, then $\mathcal C^{\zeta a_{i_0}}=\mathcal C_J^{a_{i_0}}=\mathcal C_{J'}$, where $J'=\{j+1\ |\  j\in J\}$ (Lemma \ref{L:ActionOfai} (ii)) and we can take $I=J'$, $\alpha=\zeta a_{i_0}$. 
Otherwise, suppose that $m\leqslant i_0+1\leqslant m+k$. If $\mathcal C^{\zeta a_{i_0}}=\mathcal C_J^{a_{m-1}}$ is not standard, we have $m>1$ and $\mathcal C^{\zeta a_{i_0} \xi_J}=(\mathcal C_J^{a_{m-1}})^{\xi_J}=\mathcal C_{[1,k]}$, so we can take $\alpha=\zeta a_{i_0} \xi_J$, which is 1-pure as $\zeta a_{i_0}$ and~$\xi_J$ are 1-pure (Remark \ref{R:Pure}).
\end{proof}

%Recall from Section \ref{S:Correspondence} that we have a graph isomorphism $\mathcal C_{parab}(A_{n})\xrightarrow{\ \ \mathfrak f^{-1} \ \ } \mathcal{CG}(\mathbb D_{n+1})$ defined by the formula $A_I^y \mapsto \mathcal C_I^{y}$, for every proper subinterval $I$ of $[n]$ and every $y\in A_{A_{n}}$. 

Now, the following achieves the proof of Theorem \ref{T:B}:

\begin{corollary}\label{C:MapH}
The assignment $B_I^g\mapsto \mathcal C_I^{\eta_n(g)}$, where $I$ is a proper subinterval of $[n]$ and $g\in A_{B_n}$, defines a graph isomorphism $\mathfrak H_n$ from $\mathcal C_{parab}(B_n)$ to $\mathcal {CG}(\mathbb D_{n+1})$.
\end{corollary}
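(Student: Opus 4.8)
The plan is to assemble the three preceding propositions --- Propositions \ref{P:WellDef}, \ref{P:AdjB} and \ref{P:Surj} --- into a single statement, using throughout the fact that both actions in play (the conjugation action of $A_{B_n}$ on its parabolic subgroups, and the action of $A_{A_n}$ restricted to $\mathfrak P_1=\operatorname{im}(\eta_n)$ on curves in $\mathbb D_{n+1}$) are actions by graph automorphisms, and that $\eta_n$ is a homomorphism. The essential observation is that every genuinely geometric or algebraic difficulty has already been resolved in those propositions, which treat the case where the second subgroup (resp.\ curve) is \emph{standard}; the remaining task is purely formal, reducing the general case to the standard one by equivariance.

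First I would establish that $\mathfrak H_n$ is well-defined and injective simultaneously. Suppose $B_I^g=B_J^h$ with $I,J$ proper subintervals of $[n]$ and $g,h\in A_{B_n}$. Conjugating by $h^{-1}$, which is an automorphism of $\mathcal C_{parab}(B_n)$, this is equivalent to $B_I^{gh^{-1}}=B_J$. By Proposition \ref{P:WellDef} (applied to the element $gh^{-1}$), the latter holds if and only if $\mathcal C_I^{\eta_n(gh^{-1})}=\mathcal C_J$; since $\eta_n$ is a homomorphism, $\eta_n(gh^{-1})=\eta_n(g)\eta_n(h)^{-1}$, so this reads $\mathcal C_I^{\eta_n(g)}=\mathcal C_J^{\eta_n(h)}$, i.e.\ $\mathfrak H_n(B_I^g)=\mathfrak H_n(B_J^h)$. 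Reading this chain of equivalences forwards gives well-definedness and backwards gives injectivity. For surjectivity I would invoke Proposition \ref{P:Surj}: given any curve $\mathcal C$ in $\mathbb D_{n+1}$, there is $\alpha\in\mathfrak P_1$ with $\mathcal C^\alpha$ a standard curve $\mathcal C_I$; writing $\alpha=\eta_n(g)$ (possible since $\mathfrak P_1=\operatorname{im}(\eta_n)$), one gets $\mathcal C=\mathcal C_I^{\eta_n(g^{-1})}=\mathfrak H_n(B_I^{g^{-1}})$, so $\mathcal C$ is in the image.

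Finally, to see that $\mathfrak H_n$ is a graph isomorphism I would check that it preserves and reflects adjacency by the same equivariance reduction. Given vertices $B_I^g$ and $B_J^h$, conjugation by $h^{-1}$ (an automorphism of $\mathcal C_{parab}(B_n)$) shows they are adjacent if and only if $B_I^{gh^{-1}}$ and $B_J$ are; on the curve side, applying the isometry induced by $\eta_n(h^{-1})$ shows that $\mathcal C_I^{\eta_n(g)}$ and $\mathcal C_J^{\eta_n(h)}$ are adjacent if and only if $\mathcal C_I^{\eta_n(gh^{-1})}$ and $\mathcal C_J$ are. These two reduced statements are matched by Proposition \ref{P:AdjB} (applied to $gh^{-1}$), closing the argument. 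I do not anticipate any serious obstacle here: the whole content lies in the three cited propositions, and the only point requiring care is to keep the equivariance reductions consistent on both sides --- in particular remembering that $\eta_n$ being merely a group homomorphism (and not the identity) is precisely what makes $\eta_n(gh^{-1})=\eta_n(g)\eta_n(h)^{-1}$ the bridge between the two actions.
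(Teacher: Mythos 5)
Your proof is correct and takes essentially the same approach as the paper, which likewise deduces the corollary directly from Propositions \ref{P:WellDef}, \ref{P:Surj} and \ref{P:AdjB}. Your only addition is to spell out the routine equivariance reduction (conjugating by $h^{-1}$ on the parabolic side and acting by $\eta_n(h)^{-1}$ on the curve side, bridged by $\eta_n(gh^{-1})=\eta_n(g)\eta_n(h)^{-1}$) that the paper's one-line proof leaves implicit.
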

\begin{proof}
By Proposition \ref{P:WellDef}, $\mathfrak H_n$ is a well-defined injective map. By Proposition \ref{P:Surj}, $\mathfrak H_n$ is surjective. Moreover, by Proposition \ref{P:AdjB}, both $\mathfrak H_n$ and its inverse are graph
homomorphisms. 
\end{proof}

\section{The graph $\mathcal C_{parab}(\widetilde A_n)$}\label{S:TildeA} 
Let us start with a description of the proper irreducible standard parabolic subgroups of $A_{\widetilde A_n}$. We say that a proper subset $I$ of $\{0,\ldots,n\}$ is a \emph{proper cyclic subinterval} if 
$I$ is either a proper subinterval of $\{0,\ldots,n\}$ or the union of two proper subintervals of $\{0,\ldots,n\}$ of the form $[l,n]$ and~$[0,k]$, for some $k,l$ with $1\leqslant k+1<l\leqslant n$. A proper irreducible standard parabolic subgroup of $A_{\widetilde A_n}$ is determined by a  proper cyclic subinterval of $\{0,\ldots, n\}$:
for any proper cyclic subinterval~$I$ of $\{0,\ldots, n\}$, we denote by $\widetilde A_I$ the proper irreducible standard parabolic subgroup of $A_{\widetilde A_n}$ generated by 
$\{\widetilde \sigma_i\ |\  i\in I\}$.

According to \cite{Peifer}, there is a monomorphism

\begin{align*}
    \theta_n :   A_{\widetilde A_n} & \longrightarrow A_{B_{n+1}} \\
          \widetilde\sigma_i & \longmapsto \begin{cases} \tau_{i+1} & \text{if $i\geqslant1$,}\\
                                                  \tau_{n+1}^{-1}\ldots \tau_3^{-1}\tau_1\tau_2\tau_1^{-1}\tau_3\ldots \tau_{n+1}  & \text{if $i=0$.}
\end{cases}
\end{align*}

%\cite{Peifer} shows the following:
\begin{proposition}\cite{Peifer}\label{P:SemiDirect}
Let $\rho=(\tau_1\tau_2\ldots \tau_{n+1})^{-1}\in A_{B_{n+1}}$. 
\begin{itemize}
\item[(i)] $\rho^{-(n+1)}$ is the central element of $A_{B_{n+1}}$.
\item[(ii)] $\theta_n(\widetilde \sigma_i)^{\rho}=\theta_n(\widetilde \sigma_{i+1})$ for $0\leqslant i\leqslant n-1$ and 
$\theta_n(\widetilde \sigma_n)^{\rho}=\theta_n(\widetilde \sigma_0)$.
\item[(iii)] The group $A_{B_{n+1}}$ can be decomposed as the semi-direct product $A_{B_{n+1}}=\theta_n(\widetilde A_n)\rtimes\langle\rho\rangle$, where the action of $\rho$ is given by conjugation, as in (ii).
\end{itemize}
\end{proposition}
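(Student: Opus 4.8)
The plan is to prove the computational relations in part~(ii) first, deduce the decomposition~(iii) from them, and obtain~(i) last. Throughout I write $\delta=\rho^{-1}=\tau_1\tau_2\cdots\tau_{n+1}$, so that the right conjugation action reads $x^{\rho}=\delta x\delta^{-1}$, and I record the consequence of the braid relation $\tau_j\tau_{j+1}\tau_j^{-1}=\tau_{j+1}^{-1}\tau_j\tau_{j+1}$, valid for $2\leqslant j\leqslant n$ (the edges of label $3$).

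For part~(ii) I would split into three ranges. For $1\leqslant i\leqslant n-1$ the assertion is the generic shift $\delta\tau_{i+1}\delta^{-1}=\tau_{i+2}$, which follows exactly as in the braid group by transporting $\tau_{i+1}$ leftward through $\delta$ using the commutations $\tau_{i+1}\tau_k=\tau_k\tau_{i+1}$ for $|k-(i+1)|\geqslant 2$ together with a single braid relation $\tau_{i+1}\tau_{i+2}\tau_{i+1}=\tau_{i+2}\tau_{i+1}\tau_{i+2}$. The two remaining ``wrap-around'' relations carry the cyclic symmetry of $\widetilde A_n$ and form the heart of the argument. For $i=n$ one cancels to get $\delta\tau_{n+1}\delta^{-1}=(\tau_1\cdots\tau_n)\tau_{n+1}(\tau_1\cdots\tau_n)^{-1}$, and a downward telescoping via $\tau_j\tau_{j+1}\tau_j^{-1}=\tau_{j+1}^{-1}\tau_j\tau_{j+1}$ collapses this to $\tau_{n+1}^{-1}\cdots\tau_3^{-1}(\tau_1\tau_2\tau_1^{-1})\tau_3\cdots\tau_{n+1}=\theta_n(\widetilde\sigma_0)$. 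For $i=0$, writing $\theta_n(\widetilde\sigma_0)=v^{-1}(\tau_1\tau_2\tau_1^{-1})v$ with $v=\tau_3\cdots\tau_{n+1}$ and $\delta=\tau_1\tau_2\,v$, the factors of $v$ cancel and one is left with $\tau_1\tau_2\tau_1\tau_2\tau_1^{-1}\tau_2^{-1}\tau_1^{-1}$, which equals $\tau_2=\theta_n(\widetilde\sigma_1)$ by the length-four relation $\tau_1\tau_2\tau_1\tau_2=\tau_2\tau_1\tau_2\tau_1$. This last step is the genuinely $B$-type feature and is where I expect the main difficulty to lie.

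For part~(iii) I would check the three conditions for an internal semidirect product. Generation: since $\tau_2,\dots,\tau_{n+1}\in\theta_n(\widetilde A_n)$ and $\tau_1=\rho^{-1}\tau_{n+1}^{-1}\cdots\tau_2^{-1}$, the subgroups $\theta_n(\widetilde A_n)$ and $\langle\rho\rangle$ together generate $A_{B_{n+1}}$. Normality: by~(ii) conjugation by $\rho$ cyclically permutes $\theta_n(\widetilde\sigma_0),\dots,\theta_n(\widetilde\sigma_n)$, so $\rho$ normalizes $\theta_n(\widetilde A_n)$; as $A_{B_{n+1}}$ is generated by $\theta_n(\widetilde A_n)$ and $\rho$, both of which normalize $\theta_n(\widetilde A_n)$, this subgroup is normal. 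Trivial intersection: I would introduce the homomorphism $\phi\colon A_{B_{n+1}}\to\mathbb Z$ sending $\tau_1\mapsto 1$ and $\tau_k\mapsto 0$ for $k\geqslant 2$; this is well defined because $\tau_1$ occurs with balanced exponent in every defining relation (it meets only even-labeled edges). Each $\theta_n(\widetilde\sigma_i)$ has $\phi$-value $0$, so $\phi$ vanishes on $\theta_n(\widetilde A_n)$, whereas $\phi(\rho)=-1$; hence $\theta_n(\widetilde A_n)\cap\langle\rho\rangle=\{1\}$ and $\rho$ has infinite order. These three facts yield $A_{B_{n+1}}=\theta_n(\widetilde A_n)\rtimes\langle\rho\rangle$, the action being the one of~(ii).

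Finally, part~(i). By~(ii), conjugation by $\delta=\rho^{-1}$ permutes $\theta_n(\widetilde\sigma_0),\dots,\theta_n(\widetilde\sigma_n)$ cyclically with period $n+1$, so $\rho^{-(n+1)}=\delta^{n+1}$ commutes with every $\theta_n(\widetilde\sigma_i)$, and it trivially commutes with $\rho$; since these generate $A_{B_{n+1}}$ by~(iii), $\rho^{-(n+1)}$ is central. To see it \emph{generates} the infinite cyclic center $\langle c\rangle$, with $c\in\{\Delta_{B_{n+1}},\Delta_{B_{n+1}}^2\}$ and $\rho^{-(n+1)}=c^k$, I would compare exponent sums: $\epsilon_{B_{n+1}}(\rho^{-(n+1)})=(n+1)^2$, while $\epsilon_{B_{n+1}}(\Delta_{B_{n+1}})$ is the length of a reduced word for $w_0$, namely the number of positive roots of type $B_{n+1}$, which is also $(n+1)^2$. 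The case $c=\Delta_{B_{n+1}}^2$ would force $(n+1)^2=2k(n+1)^2$, which is impossible; hence $c=\Delta_{B_{n+1}}$, $k=1$, and $\rho^{-(n+1)}=\Delta_{B_{n+1}}$ generates the center. Beyond the wrap-around computations of~(ii), the only delicate point is precisely this passage from ``central'' to ``generator of the center,'' which the exponent-sum comparison settles.
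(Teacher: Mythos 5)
Your proof is correct, and all the computations check out: the generic shift $\delta\tau_j\delta^{-1}=\tau_{j+1}$ for $2\leqslant j\leqslant n$ uses only commutations and one label-$3$ relation exactly as you say; the telescoping for $i=n$ correctly stops at $j=2$ (so that only label-$3$ relations $\tau_j\tau_{j+1}\tau_j^{-1}=\tau_{j+1}^{-1}\tau_j\tau_{j+1}$, $j\geqslant 2$, are invoked) before the final conjugation by $\tau_1$, which commutes with $\tau_3,\ldots,\tau_{n+1}$; the $i=0$ case does reduce, after cancelling $v=\tau_3\cdots\tau_{n+1}$, to $\tau_1\tau_2\tau_1\tau_2\tau_1^{-1}\tau_2^{-1}\tau_1^{-1}=\tau_2$, which is exactly the label-$4$ relation; your homomorphism $\phi$ is well defined precisely because $\tau_1$ meets only even-labeled edges; and the root count is right ($B_{n+1}$ has $(n+1)^2$ positive roots, so $\epsilon_{B_{n+1}}(\Delta_{B_{n+1}})=(n+1)^2=\epsilon_{B_{n+1}}(\rho^{-(n+1)})$). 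A pleasant by-product of your exponent-sum step is that it independently rules out the alternative $c=\Delta_{B_{n+1}}^2$ allowed by \cite{BS}, recovering the classical fact that $\Delta$ itself is central in type $B$. The comparison with the paper is quickly made: the paper gives no proof at all, quoting the statement from Kent--Peifer \cite{Peifer}, whose argument is geometric — $A_{B_{n+1}}$ is identified with the annular ($1$-pure) braid group, $\rho$ with a rotation of the annulus, and the splitting comes from a winding-number homomorphism; your $\phi$ (the exponent of $\tau_1$) is precisely the presentation-theoretic avatar of that winding number, so your argument is a self-contained algebraic substitute working directly from the Coxeter presentation, at the modest cost of the hands-on wrap-around computations that the geometric picture renders obvious. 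If you wanted to shorten part (i), you could alternatively invoke the known identity $\Delta_{B_{m}}=(\tau_1\cdots\tau_m)^m$, but your derivation via centrality plus exponent sums is complete as it stands.
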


\begin{proposition}\label{P:Map}
If $P$ is a proper irreducible parabolic subgroup of $A_{\widetilde A_{n}}$, then $\theta_n(P)$ is a proper irreducible parabolic subgroup of $A_{B_{n+1}}$. 
\end{proposition}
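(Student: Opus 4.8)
The plan is to reduce the statement to the case of \emph{standard} parabolic subgroups and then to exploit the cyclic symmetry provided by Proposition~\ref{P:SemiDirect}(ii). First I would use that $P$, being parabolic, is conjugate to a standard parabolic subgroup; since $P$ is moreover proper and irreducible, and the Coxeter graph $\widetilde A_n$ is an $(n+1)$-cycle whose connected proper vertex subsets are exactly the proper cyclic subintervals, I can write $P=\widetilde A_I^{\,g}$ for some proper cyclic subinterval $I$ of $\{0,\ldots,n\}$ and some $g\in A_{\widetilde A_n}$. Applying $\theta_n$ then gives $\theta_n(P)=\theta_n(\widetilde A_I)^{\theta_n(g)}$. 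As a conjugate of a parabolic subgroup is again parabolic, and conjugation preserves properness and irreducibility, it suffices to prove the statement for $P=\widetilde A_I$ itself.

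Next I would dispose of the standard subintervals avoiding the index $0$. If $I\subseteq\{1,\ldots,n\}$ is an ordinary subinterval, then every $i\in I$ satisfies $i\geqslant 1$, so $\theta_n(\widetilde\sigma_i)=\tau_{i+1}$ and hence $\theta_n(\widetilde A_I)=\langle \tau_{j}\mid j\in I+1\rangle=B_{I+1}$, where $I+1=\{i+1\mid i\in I\}\subseteq\{2,\ldots,n+1\}$ is a proper subinterval of $[n+1]$. This is a standard parabolic subgroup of $A_{B_{n+1}}$: it is proper because $I$ is proper, and irreducible because $I+1$ induces a connected (type~$A$) subgraph of the Coxeter graph $B_{n+1}$.

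The genuine content is to remove the hypothesis $0\notin I$, and this is where I expect the only real obstacle. The image $\theta_n(\widetilde\sigma_0)$ is not a standard generator of $A_{B_{n+1}}$ but the awkward conjugate $\tau_{n+1}^{-1}\cdots\tau_3^{-1}\tau_1\tau_2\tau_1^{-1}\tau_3\cdots\tau_{n+1}$, so computing $\theta_n(\widetilde A_I)$ directly when $0\in I$ is unpleasant. The plan is to avoid this computation entirely by rotating. By Proposition~\ref{P:SemiDirect}(ii), conjugation by $\rho$ realizes the cyclic shift $\theta_n(\widetilde\sigma_i)^{\rho}=\theta_n(\widetilde\sigma_{i+1})$ with indices read modulo $n+1$; iterating yields
$$\theta_n(\widetilde A_I)^{\rho^{k}}=\theta_n(\widetilde A_{I+k}),$$
where $I+k$ is the cyclic shift of $I$ by $k$, again a proper cyclic subinterval. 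Since $I$ is proper, I would pick $j_0\in\{0,\ldots,n\}\setminus I$ and take $k\equiv -j_0 \pmod{n+1}$, so that $0\notin I+k$ and therefore $I+k$ is an ordinary subinterval of $\{1,\ldots,n\}$.

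Putting the pieces together finishes the argument. By the second paragraph, $\theta_n(\widetilde A_{I+k})=B_{(I+k)+1}$ is a proper irreducible standard parabolic subgroup of $A_{B_{n+1}}$, whence $\theta_n(\widetilde A_I)=B_{(I+k)+1}^{\,\rho^{-k}}$ is parabolic, proper and irreducible; combined with the first paragraph this shows $\theta_n(P)$ is a proper irreducible parabolic subgroup of $A_{B_{n+1}}$. The two points I would verify carefully are that the displayed cyclic-shift identity does follow by iterating Proposition~\ref{P:SemiDirect}(ii), and that the shifted set $I+k$ is indeed an admissible proper cyclic subinterval not containing $0$; both are routine.
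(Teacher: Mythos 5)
Your proof is correct and follows essentially the same route as the paper: reduce to the standard case $P=\widetilde A_I$ by conjugation, observe that $\theta_n(\widetilde A_I)$ is a proper irreducible standard parabolic subgroup of $A_{B_{n+1}}$ when $0\notin I$, and use the $\rho$-conjugation of Proposition \ref{P:SemiDirect}(ii) to rotate $I$ away from $0$ otherwise. The only cosmetic difference is that you pick a single uniform shift $\rho^{k}$ with $k\equiv -j_0 \pmod{n+1}$ for some $j_0\notin I$, whereas the paper treats the two shapes of cyclic subinterval containing $0$ (an ordinary subinterval containing $0$, and a union $[l,n]\cup[0,k]$) separately with explicit powers of $\rho$.
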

\begin{proof}
Suppose first that $P$ is standard: $P=\widetilde A_I$ for some proper cyclic subinterval of $\{0,\ldots,n\}$. 
If~$I$ is a subinterval of $\{0,\ldots,n\}$ which does not contain $0$, then~$\theta_n(P)$ is the subgroup
 of~$A_{B_{n+1}}$ generated by $\{\tau_{i+1}\ |\ i\in I\}$, which is a proper irreducible standard parabolic subgroup. If~$I$ is a subinterval of $\{0,\ldots,n\}$ which contains~0, then in view of Proposition \ref{P:SemiDirect}(ii), ${\theta_n(\widetilde A_I)^{\rho}=\langle \tau_{i+2}, i\in I\rangle}$, whence $\rho$ conjugates~$\theta_n(\widetilde A_I)$ 
  to a proper irreducible standard parabolic subgroup of $A_{B_{n+1}}$. Similarly, if~$I$ is of the form $[l,n]\cup[0,k]$, with 
  $1\leqslant k+1<l\leqslant n$, then $(\theta_n(\widetilde A_I))^{\rho^{n-l+2}}=\langle \tau_{2},\ldots, \tau_{k+n-l+3}\rangle$,
   whence~$\theta_n(\widetilde A_I)$ is again conjugate to a proper irreducible standard parabolic subgroup of $A_{B_{n+1}}$. 
Finally, if~$P$ is not standard, there is some $g\in A_{\widetilde A_n}$ such that $P^g=P_0$ is standard; by the above discussion, $\theta_n(P_0)$ is a proper irreducible parabolic subgroup of $A_{B_{n+1}}$ and from $\theta_n(P)^{\theta_n(g)}=\theta_n(P_0)$, we deduce that~$\theta_n(P)$ is itself a proper irreducible parabolic subgroup of $A_{B_{n+1}}$. 
%Endow the circuit defining Coxeter graph for $A_{\widetilde A_n}$ (Figure \ref{Figure} (c)) with the counterclockwise orientation. Let $P$ be a standard irreducible proper parabolic subgroup of $A_{\widetilde A_n}$: $P$ is generated by a set $I$ of consecutive vertices along this circuit. Let $k$ be such that $\widetilde \sigma_k$ is the first vertex in $I$ and $l$ be such that $\widetilde \sigma_l$ is the last vertex in $I$.  Assume first $k=0$; then $P=\langle \widetilde \sigma_0,\ldots,\widetilde\sigma_l\rangle$ and $\theta(P)^{\rho}=\langle \theta(\widetilde \sigma_1),\ldots ,\theta(\widetilde \sigma_{l+1})\rangle=\langle \tau_2,\ldots,\tau_{l+2}\rangle$, showing that $\theta(P)$ is parabolic. Assume now that $k\geqslant 1$. If $l\geqslant k$, then the letters generating $P$ belong to $\{\widetilde \sigma_i, i\in \{1,\ldots,n\}\}$ and $\theta(P)$ is a standard parabolic of $A_{B_{n+1}}$. If on the contrary $l<k$ then $\theta(P)^{\rho^{n-k+2}}$ is a standard parabolic subgroup, whence $\theta(P)$ is parabolic. 
%
%Finally, if $P$ is not standard, there is $g\in A_{\widetilde A_n}$ so that $P^g$ is standard, say $P_0$; then $\theta(P)=\theta(P_0^{g^{-1}})=\theta(P_0)^{\theta(g)^{-1}}$, which is parabolic by the above discussion. 
\end{proof}

\begin{proposition}\label{P:AdjTildeA}
Let $P,Q$ be proper irreducible parabolic subgroups of $A_{\widetilde A_n}$; the following are equivalent:
\begin{itemize}
\item[(i)] $P$ and $Q$ are adjacent in $\mathcal C_{parab}(\widetilde A_n)$,
\item[(ii)] $\theta_n(P)$ and $\theta_n(Q)$ are adjacent in $\mathcal C_{parab}(B_{n+1})$.
 \end{itemize}
\end{proposition}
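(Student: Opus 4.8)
The plan is to argue directly from the definition of adjacency (Definition \ref{D:CParab}), rather than via central elements, since $A_{\widetilde A_n}$ is of euclidean type and the central-element criterion of Proposition \ref{P:Adjacent} is only available for groups of spherical type. The only structural inputs needed are that $\theta_n$ is an injective homomorphism and Proposition \ref{P:Map}, which guarantees that $\theta_n(P)$ and $\theta_n(Q)$ are genuine vertices of $\mathcal C_{parab}(B_{n+1})$, i.e. proper irreducible parabolic subgroups; this is what makes the statement ``$\theta_n(P)$ and $\theta_n(Q)$ are adjacent'' meaningful in the first place.

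First I would record the elementary consequences of injectivity that make each clause of Definition \ref{D:CParab} transfer across $\theta_n$: (a) $P\neq Q$ if and only if $\theta_n(P)\neq\theta_n(Q)$; (b) $P\subseteq Q$ if and only if $\theta_n(P)\subseteq\theta_n(Q)$, where the forward implication is automatic for any homomorphism and the converse uses injectivity; (c) $\theta_n(P)\cap\theta_n(Q)=\theta_n(P\cap Q)$, so that $P\cap Q=\{1\}$ if and only if $\theta_n(P)\cap\theta_n(Q)=\{1\}$; and (d) $pq=qp$ for all $p\in P$, $q\in Q$ if and only if $\theta_n(p)\theta_n(q)=\theta_n(q)\theta_n(p)$ for all such $p,q$, which is exactly elementwise commutation of $\theta_n(P)$ and $\theta_n(Q)$, since these groups consist precisely of the images $\theta_n(p)$ and $\theta_n(q)$.

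With these facts in hand the proof reduces to a clause-by-clause comparison. The adjacency of $P$ and $Q$ is the disjunction ``[$P\subseteq Q$ or $Q\subseteq P$] or [$P\cap Q=\{1\}$ and $P,Q$ commute elementwise]'' together with $P\neq Q$; the adjacency of $\theta_n(P)$ and $\theta_n(Q)$ is the same disjunction with $\theta_n$ applied throughout. By (a)--(d) each atomic condition on one side is equivalent to its counterpart on the other, so the two disjunctions are logically equivalent, yielding (i)~$\Leftrightarrow$~(ii).

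I do not anticipate a genuine obstacle here; the only point requiring care is conceptual rather than technical, namely the reminder not to invoke Proposition \ref{P:Adjacent} on the source side. The single computation deserving an explicit line is the identity $\theta_n(P)\cap\theta_n(Q)=\theta_n(P\cap Q)$: given $x$ in the left-hand side, one writes $x=\theta_n(p)=\theta_n(q)$ with $p\in P$, $q\in Q$, and uses injectivity to conclude $p=q\in P\cap Q$, whence $x\in\theta_n(P\cap Q)$; the reverse inclusion is immediate.
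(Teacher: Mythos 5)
Your proof is correct and is essentially the paper's own argument: the paper's proof of Proposition \ref{P:AdjTildeA} simply states that both implications follow from the injectivity of $\theta_n$ applied to Definition \ref{D:CParab}, which is exactly the clause-by-clause transfer you carry out in detail (with Proposition \ref{P:Map} ensuring the images are genuine vertices). Your write-up just makes explicit the routine verifications the paper leaves to the reader.
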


\begin{proof}
Recall that the adjacency is defined in Definition \ref{D:CParab}
Both implications follow easily in view of the injectivity of $\theta_n$.
\end{proof}

Following the scheme of Section \ref{S:B}, we now want to characterize those parabolic subgroups of~$A_{B_{n+1}}$ which are of the form $\theta_n(P)$ for some proper irreducible parabolic subgroup $P$ of $A_{\widetilde A_n}$. Recall the graph isomorphism $\mathfrak H_{n+1}$: $\mathcal C_{parab}(B_{n+1})\longrightarrow \mathcal{CG}(\mathbb D_{n+2})$ --see Corollary \ref{C:MapH}.  We will say that a proper irreducible parabolic subgroup $P$ of $A_{B_{n+1}}$ is a \emph{braid subgroup} if there is some $I\subset \{2,\ldots, n+1\}$ so that $P$ is conjugate to $B_I$. Notice that the non-cyclic braid subgroups are exactly the non-cyclic proper irreducible parabolic subgroups which are isomorphic to an Artin-Tits group $A_k, k\geqslant 2$, hence the name.

%Proposition \ref{P:Map} allows us to define a map $\Theta$ from the set of vertices of $\mathcal C_{parab}(\widetilde A_n)$ to the set of vertices of $\mathcal C_{parab}(B_{n+1})$, which sends a proper irreducible parabolic subgroup of $A_{\widetilde A_n}$ to its image under the monomorphism $\theta$. We first describe the image of $\Theta$. 

\begin{proposition}\label{P:Image}
Let $Q$ be a proper irreducible parabolic subgroup of $A_{B_{n+1}}$; the following are equivalent:

\begin{itemize}
\item[(i)] There exists a proper irreducible parabolic subgroup $P$ of $A_{\widetilde A_n}$ such that $Q=\theta_n(P)$,
\item[(ii)] $Q$ is a braid subgroup of $A_{B_{n+1}}$,
\item[(iii)] The curve $\mathcal C=\mathfrak H_{n+1}(Q)$ does not surround the first puncture of $\mathbb D_{n+2}$. 
\end{itemize}
\end{proposition}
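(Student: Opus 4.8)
The plan is to establish the three equivalences in a cycle, or more practically to prove (i)$\Leftrightarrow$(ii) and (ii)$\Leftrightarrow$(iii) separately, since (iii) is a purely topological reformulation while (i)$\Leftrightarrow$(ii) is the algebraic heart of the matter. Before anything else, I would unwind the definition of ``braid subgroup'': by the remark preceding the statement, the non-cyclic braid subgroups are precisely the non-cyclic proper irreducible parabolic subgroups of $A_{B_{n+1}}$ isomorphic to some $A_k$, i.e. those conjugate to $B_I$ with $I\subset\{2,\ldots,n+1\}$ (so $1\notin I$). The cyclic case, $I=\{i\}$, needs separate bookkeeping because of the exceptional formula for $z_{B_{\{1\}}}=\tau_1$ (recall Lemma \ref{L:Comput}(ii)); a braid subgroup in the cyclic case should be a conjugate of $B_{\{i\}}$ with $i\geq 2$.

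For (i)$\Rightarrow$(ii), I would use Proposition \ref{P:Map} together with the semi-direct product structure of Proposition \ref{P:SemiDirect}. The key observation is that $\theta_n$ sends each standard generator $\widetilde\sigma_i$ with $i\geq 1$ to $\tau_{i+1}$, and sends $\widetilde\sigma_0$ to a conjugate (by $\tau_{n+1}^{-1}\cdots\tau_3^{-1}$) of $\tau_1\tau_2\tau_1^{-1}$, which is itself a conjugate of $\tau_2$ inside the $\tau_1,\tau_2$ dihedral subgroup. Hence every $\theta_n(\widetilde\sigma_i)$ is conjugate to some $\tau_j$ with $j\geq 2$, never to $\tau_1$. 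The proof of Proposition \ref{P:Map} already shows that $\theta_n(\widetilde A_I)$ is conjugate (by a suitable power of $\rho$) to a standard parabolic subgroup generated by consecutive $\tau$'s avoiding $\tau_1$; tracking those explicit conjugating elements shows the resulting index set lies in $\{2,\ldots,n+1\}$, so $\theta_n(P)$ is a braid subgroup. The reduction from arbitrary $P$ to standard $P$ is again by conjugation, exactly as in Proposition \ref{P:Map}.

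For (ii)$\Rightarrow$(i), the idea is to take $Q$ conjugate to $B_I$ with $I\subset\{2,\ldots,n+1\}$ and exhibit a preimage. Since the standard generators avoid $\tau_1$, the subgroup $B_I$ itself is $\langle\tau_{i}\mid i\in I\rangle=\theta_n(\langle\widetilde\sigma_{i-1}\mid i\in I\rangle)=\theta_n(\widetilde A_{I-1})$ where $I-1=\{i-1:i\in I\}\subset\{1,\ldots,n\}$ is a genuine (cyclic) subinterval. For a general conjugate $Q=B_I^{\,h}$ with $h\in A_{B_{n+1}}$, I would write $h=\theta_n(g)\rho^{k}$ using the semi-direct decomposition (Proposition \ref{P:SemiDirect}(iii)); since $\rho$ permutes the images $\theta_n(\widetilde\sigma_i)$ cyclically, conjugation by $\rho^k$ carries the parabolic $\theta_n(\widetilde A_{I-1})$ to another $\theta_n$-image of a standard parabolic of $A_{\widetilde A_n}$, and then conjugation by $\theta_n(g)$ stays inside the $\theta_n$-image. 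Hence $Q=\theta_n(P)$ for a suitable parabolic $P$ of $A_{\widetilde A_n}$.

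For (ii)$\Leftrightarrow$(iii), I would translate through the isomorphism $\mathfrak H_{n+1}$ of Corollary \ref{C:MapH}, under which $B_I^{\,h}$ corresponds to the curve $\mathcal C_I^{\eta_{n+1}(h)}$. The condition $I\subset\{2,\ldots,n+1\}$ means the round curve $\mathcal C_I$ does not surround the first puncture; the point is that the conjugating braids $\eta_{n+1}(h)$ lie in the $1$-pure subgroup $\mathfrak P_1$, so they fix the first puncture and cannot drag it across a curve that initially avoids it. I expect the main obstacle to be this last step: making precise, in topological terms, that $1$-pure braids preserve the property ``does not surround puncture $1$''. The cleanest route is to invoke Proposition \ref{P:WellDef}/\ref{P:AdjB}-style reasoning: a curve surrounds puncture $1$ iff its $\mathfrak f$-image, viewed inside $A_{A_{n+1}}$, contains a half-twist involving the first strand, a property preserved under conjugation by $\mathfrak P_1$ but destroyed by cosets moving puncture $1$. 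I would argue that the surjectivity mechanism of Proposition \ref{P:Surj} lets me standardize $\mathcal C$ by a $1$-pure braid without changing whether puncture $1$ is enclosed, reducing (iii) to the standard-curve case where it is immediate from the index set.
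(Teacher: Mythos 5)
Your proposal is correct and takes essentially the same route as the paper's proof: (i)$\Rightarrow$(ii) rests on Proposition \ref{P:Map} (the paper shortcuts the non-cyclic case via the remark that non-cyclic braid subgroups are exactly the non-cyclic proper irreducible parabolic subgroups of type $A_k$, where you instead track the explicit $\rho$-conjugations, a harmless variant), (ii)$\Rightarrow$(i) uses the semi-direct decomposition of Proposition \ref{P:SemiDirect}(iii) exactly as in the paper, and (ii)$\Leftrightarrow$(iii) passes through $\mathfrak H_{n+1}$ with Proposition \ref{P:Surj} supplying standardization by a $1$-pure braid. The step you flagged as a possible obstacle is in fact immediate and needs no half-twist characterization: for $y\in\mathfrak P_1$ the set of punctures enclosed by $\mathcal C^{y}$ is the image under $\pi_y$ of the set enclosed by $\mathcal C$, and $\pi_y(1)=1$, which is precisely how the paper treats it.
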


\begin{proof}
(i) $\Longrightarrow$ (ii). Let $P$ be a proper irreducible parabolic subgroup of $A_{\widetilde A_n}$ with the claimed property.  Suppose first that $P$ is cyclic, conjugate to some $\langle \widetilde \sigma_i\rangle$; then by the formulae defining~$\theta_n$, $\theta_n(P)$ is conjugate to $\langle \sigma_2\rangle$ hence is a cyclic braid subgroup. Suppose then that~$P$ is not cyclic, that is $P$ is isomorphic to a braid group of type $A_k, k\geqslant 2$. Note that $\theta_n(P)$ is isomorphic to $P$; by Proposition \ref{P:Map}, it is an irreducible parabolic subgroup; hence it is a braid subgroup. 

(ii) $\Longleftrightarrow$ (iii). We have the following chain of equivalences. \\
$Q$ is a braid subgroup of $A_{B_{n+1}}$ $\Longleftrightarrow$ $Q=B_I^x$ for some $x\in A_{B_{n+1}}$ and some $I\subset \{2,\ldots, n+1\}$ $\Longleftrightarrow$ $\mathfrak H_{n+1}(Q)=\mathcal C_I^{\eta_{n+1}(x)}$ for some $x\in A_{B_{n+1}}$ and some $I\subset \{2,\ldots, n+1\}$ $\Longleftrightarrow$ $\mathcal C=\mathfrak H_{n+1}(Q)=\mathcal C_I^y$, for some~$y$ 1-pure and some $I\subset \{2,\ldots, n+1\}$ $\Longleftrightarrow$ $\mathcal C=\mathfrak H_{n+1}(Q)$ does not surround the first puncture. The right-to-left direction of the latter equivalence uses Proposition \ref{P:Surj} which allows to standardize any curve through a 1-pure braid.

(ii)$\Longrightarrow$ (i). We need to show that every braid subgroup $Q$ satisfies that $Q=\theta_n(P)$ for some proper irreducible parabolic subgroup $P$ of $A_{\widetilde A_n}$. 
Assume first that $Q$ is standard; that is $Q=B_I$, for $I\subset \{2,\ldots,n+1\}$.
Setting $I'=\{i-1\ |\  i\in I\}$, we see that $Q=\theta_n(\widetilde A_{I'})$, as desired. 
If~$Q$ is not standard, let $\zeta\in A_{B_{n+1}}$ be such that $Q^{\zeta}=B_I$ is standard, 
for $I\subset \{2,\ldots,n+1\}$ and let $I'=\{i-1\ |\ i\in I\}$ so that $Q^{\zeta}=B_I=\theta_n(\widetilde A_{I'})$. 
If $\zeta=\theta_n(x)$ for some $x\in A_{\widetilde A_n}$ 
we are done as $Q=\theta_n(\widetilde A_{I'})^{\theta_n(x)^{-1}}=\theta_n({\widetilde A_{I'}}^{x^{-1}})$ is the image of 
a proper irreducible parabolic subgroup of $A_{\widetilde A_n}$. 
If on the contrary $\zeta$ is not in the image of $\theta_n$, there is some $r\in \mathbb Z$ and $x\in A_{\widetilde A_n}$ such that $\zeta\rho^r=\theta_n(x)$ --see Proposition \ref{P:SemiDirect}(iii). 
Using Proposition \ref{P:SemiDirect}(ii), we have $Q^{\zeta\rho^r}=(Q^{\zeta})^{\rho^r}=(\theta_n(\widetilde A_{I'}))^{\rho^r}=\theta_n(\widetilde A_{\{i+r,\ |\ i\in I'\}})$, where in the last term all indices are taken modulo $n+1$. But this is equivalent to saying that $Q=\theta_n(\widetilde A_{\{i+r\ |\ i\in I'\}}^{x^{-1}})$, which achieves the proof.
\end{proof}

%
%
%Suppose that $Q=\theta(P)$, for some proper irreducible parabolic subgroup $P$ of $A_{\widetilde A_n}$. If~$P$ is cyclic, $P$ is conjugate in 
%$A_{\widetilde A_n}$ to $\langle \widetilde \sigma_i\rangle$ for some $0\leqslant i\leqslant n$; by definition of $\theta$,
%we then see that~$Q$ is conjugate to $\langle \tau_2\rangle$ (recall that in $A_{B_{n+1}}$ all $\langle \tau_{i}\rangle$, $i\geqslant 2$, are conjugate) so $Q$ is of type~A. 
%If $P$ is not cyclic, $P$ is isomorphic to a braid group $A_{A_k}$ for $2\leqslant k\leqslant n$. Note that $Q=\theta(P)$ and $P$ are isomorphic; therefore $Q$ must be of type A. 
% 
%(ii) $\Longrightarrow$ (iii). Suppose that $Q$ is of type A: there is 
%$g\in A_{B_{n+1}}$ such that $Q=B_I^g$, for $I\subset \{2,\ldots,n+1\}$. 
%Then we have 
%$$\mathcal C=\mathfrak f^{-1}\mathfrak H(Q)=\mathfrak f^{-1}(A_I^{\eta(g)})=\mathcal C_I^{\eta(g)}.$$ Notice that $\mathcal C_I$ does not surround the first puncture and that $\eta(g)$ is 1-pure. 
%It follows that $\mathcal C$ does not surround the first puncture either. 
%
%(iii)$\Longrightarrow$ (i). 

\begin{definition}\rm
Let  $\mathcal K_A$ be the subgraph of $\mathcal{CG}(\mathbb D_{n+2})$ induced by the curves which do not surround the first puncture of $\mathbb D_{n+2}$.  
\end{definition}

\begin{corollary}\label{C:MapTheta}
The assignment $P\mapsto \mathfrak H_{n+1}(\theta_n(P))$, where $P$ is a proper irreducible parabolic subgroup of $A_{\widetilde A_n}$, defines a graph isomomorphism $\Theta_n$ from $\mathcal C_{parab}(\widetilde A_n)$ to $\mathcal K_A$. In particular, we have $d_{\mathbb D_{n+2}}(\Theta_n(P),\Theta_n(P'))\leqslant d_{\widetilde A_n}(P,P')$ for all proper irreducible parabolic subgroups $P$ and $P'$ of $A_{\widetilde A_n}$. 
\end{corollary}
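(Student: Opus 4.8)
The plan is to assemble the graph isomorphism $\Theta_n$ directly from the two bijections already established in the excerpt, namely the monomorphism $\theta_n : A_{\widetilde A_n} \to A_{B_{n+1}}$ together with the graph isomorphism $\mathfrak H_{n+1} : \mathcal C_{parab}(B_{n+1}) \to \mathcal{CG}(\mathbb D_{n+2})$ from Corollary \ref{C:MapH}. First I would observe that by Proposition \ref{P:Map}, for every proper irreducible parabolic subgroup $P$ of $A_{\widetilde A_n}$, the image $\theta_n(P)$ is a proper irreducible parabolic subgroup of $A_{B_{n+1}}$, so the composite $P \mapsto \mathfrak H_{n+1}(\theta_n(P))$ is at least a well-defined map into the vertex set of $\mathcal{CG}(\mathbb D_{n+2})$. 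Moreover, by Proposition \ref{P:Image} (specifically the equivalence (i) $\Longleftrightarrow$ (iii)), the image of this map consists exactly of those curves which do not surround the first puncture, that is, exactly the vertex set of $\mathcal K_A$; so $\Theta_n$ lands in $\mathcal K_A$ and is surjective onto its vertices.

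The next step is injectivity. Here I would use that $\theta_n$ is a monomorphism and $\mathfrak H_{n+1}$ is a bijection. If $\Theta_n(P) = \Theta_n(P')$, then $\mathfrak H_{n+1}(\theta_n(P)) = \mathfrak H_{n+1}(\theta_n(P'))$, so $\theta_n(P) = \theta_n(P')$ as parabolic subgroups of $A_{B_{n+1}}$. Injectivity of $\theta_n$ as a group homomorphism does not immediately give $P = P'$ at the level of subgroups, but since $\theta_n$ is injective, $P = \theta_n^{-1}(\theta_n(P)) = \theta_n^{-1}(\theta_n(P')) = P'$, using that the preimage of the image of a subgroup under an injective homomorphism recovers the subgroup. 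Thus $\Theta_n$ is a bijection on vertices.

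For the edges, I would invoke Proposition \ref{P:AdjTildeA}: two parabolic subgroups $P,Q$ of $A_{\widetilde A_n}$ are adjacent in $\mathcal C_{parab}(\widetilde A_n)$ if and only if $\theta_n(P)$ and $\theta_n(Q)$ are adjacent in $\mathcal C_{parab}(B_{n+1})$. Composing with the fact that $\mathfrak H_{n+1}$ is a graph isomorphism (both it and its inverse preserve adjacency), we conclude that $P,Q$ are adjacent precisely when $\Theta_n(P) = \mathfrak H_{n+1}(\theta_n(P))$ and $\Theta_n(Q) = \mathfrak H_{n+1}(\theta_n(Q))$ are adjacent in $\mathcal{CG}(\mathbb D_{n+2})$, and hence in the induced subgraph $\mathcal K_A$. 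This shows both $\Theta_n$ and $\Theta_n^{-1}$ are graph homomorphisms, so $\Theta_n$ is a graph isomorphism onto $\mathcal K_A$.

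Finally, the distance inequality $d_{\mathbb D_{n+2}}(\Theta_n(P),\Theta_n(P')) \leqslant d_{\widetilde A_n}(P,P')$ follows formally: since $\Theta_n$ is a graph isomorphism onto the induced subgraph $\mathcal K_A$ of $\mathcal{CG}(\mathbb D_{n+2})$, any path in $\mathcal K_A$ between $\Theta_n(P)$ and $\Theta_n(P')$ is a fortiori a path in the ambient graph $\mathcal{CG}(\mathbb D_{n+2})$, so the ambient distance is bounded above by the distance $d_{\mathcal K_A}$ inside $\mathcal K_A$, which equals $d_{\widetilde A_n}(P,P')$ via the isomorphism. I expect no serious obstacle here; the one point requiring care is that $\mathcal K_A$ is merely an induced subgraph and need not be isometrically embedded (indeed the paper later emphasizes it is not quasi-isometrically embedded), which is exactly why only the inequality $\leqslant$ holds and not equality. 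The argument is essentially bookkeeping: the content has already been front-loaded into Propositions \ref{P:Map}, \ref{P:AdjTildeA}, and \ref{P:Image} together with Corollary \ref{C:MapH}.
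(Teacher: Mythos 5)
Your proposal is correct and follows essentially the same route as the paper's proof: well-definedness and injectivity via Proposition \ref{P:Map} and the injectivity of $\theta_n$, surjectivity onto the vertices of $\mathcal K_A$ via Proposition \ref{P:Image}, adjacency in both directions via Proposition \ref{P:AdjTildeA} composed with the isomorphism $\mathfrak H_{n+1}$ of Corollary \ref{C:MapH}. Your explicit derivation of the distance inequality (paths in the induced subgraph $\mathcal K_A$ are paths in $\mathcal{CG}(\mathbb D_{n+2})$, with only $\leqslant$ since the embedding need not be isometric) is a correct elaboration of what the paper leaves implicit in its ``In particular''.
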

\begin{proof}
Recall that $\mathfrak H_{n+1}$ is the isomorphism from Corollary \ref{C:MapH}.
Because $\theta_n$ is injective and according to Proposition \ref{P:Map}, the formula $P\mapsto \mathfrak H_{n+1}(\theta_n(P))$ defines an injective map $\Theta_n$ from the set of vertices of $\mathcal C_{parab}(\widetilde A_n)$ to the set of curves in $\mathbb D_{n+2}$. By Proposition~\ref{P:Image}, the image of this map is the set of vertices of $\mathcal K_A$. By Proposition \ref{P:AdjTildeA}, both~$\Theta_n$ and its inverse are graph homomorphisms.
\end{proof}

\begin{proposition}\label{P:DenseTildeA} The subgraph $\mathcal K_A$ is 1-dense in $\mathcal{CG}(\mathbb D_{n+2})$; as a consequence, the graph $\mathcal C_{parab}(\widetilde A_n)$ has infinite diameter.
\end{proposition}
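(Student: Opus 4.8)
The plan is to prove $1$-density by an elementary cut-and-paste argument on $\mathbb D_{n+2}$, and then to deduce the infinite diameter from $1$-density together with the known infinite diameter of the full curve graph and the isometry supplied by $\Theta_n$.

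First I would observe that every vertex of $\mathcal K_A$ lies at distance $0$ from itself, so the only curves I must treat are those which \emph{do} surround the first puncture of $\mathbb D_{n+2}$. Let $\mathcal C$ be such a curve. It splits $\mathbb D_{n+2}$ into an interior and an exterior, with the first puncture in its interior. The $n+1$ remaining (non-first) punctures are distributed between these two complementary regions; since $n\geqslant 2$ we have $n+1\geqslant 3$, so by the pigeonhole principle one region contains at least two non-first punctures. Inside that region I would draw a small essential curve $\mathcal C'$ enclosing exactly two of those punctures. Then $\mathcal C'$ is disjoint from $\mathcal C$ (it is contained in a single complementary region), does not surround the first puncture, and is distinct from $\mathcal C$ (it encloses two non-first punctures, whereas $\mathcal C$ encloses the first). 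Hence $\mathcal C'\in\mathcal K_A$ and $d_{\mathbb D_{n+2}}(\mathcal C,\mathcal C')=1$, which proves that $\mathcal K_A$ is $1$-dense.

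For the consequence, I would use the standard remark that a $1$-dense subgraph of a graph of infinite diameter has infinite diameter in the induced metric: were the $d_{\mathbb D_{n+2}}$-diameter of $\mathcal K_A$ bounded by some $D$, then any two curves of $\mathbb D_{n+2}$ could be joined by a path of length at most $D+2$ (step to a nearest vertex of $\mathcal K_A$, cross $\mathcal K_A$, step back), contradicting the infinite diameter of the curve graph $\mathcal{CG}(\mathbb D_{n+2})\cong\mathcal C_{parab}(A_{n+1})$ (which holds since $n+1\geqslant 3$). As $\mathcal K_A$ is an \emph{induced} subgraph, its intrinsic metric $d_{\mathcal K_A}$ dominates the restriction of $d_{\mathbb D_{n+2}}$, so $\mathcal K_A$ has infinite intrinsic diameter too; and since $\Theta_n$ is a graph isomorphism (Corollary \ref{C:MapTheta}), hence an isometry, the graph $\mathcal C_{parab}(\widetilde A_n)$ has infinite diameter as well.

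The construction is routine; the only subtlety is the extremal case where $\mathcal C$ encloses all but one puncture, so that its exterior carries no essential curve and $\mathcal C'$ must be located in the \emph{interior} of $\mathcal C$. The pigeonhole formulation handles this uniformly, which is why I would avoid an explicit case split. The only other point to watch is the passage from the ambient metric $d_{\mathbb D_{n+2}}$ to the intrinsic metric of $\mathcal K_A$, which is legitimate precisely because $\mathcal K_A$ is an induced subgraph.
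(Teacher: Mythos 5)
Your proposal is correct, and its first half takes a genuinely different route from the paper. The paper proves $1$-density in two steps: it first treats \emph{standard} curves surrounding the first puncture (taking $\mathcal C'=\mathcal C_{\{3\}}$ when $\mathcal C=\mathcal C_{\{1\}}$ and $\mathcal C'=\mathcal C_{\{2\}}$ otherwise), and then reduces the general case to the standard one via Proposition \ref{P:Surj}: an arbitrary curve is standardized by a $1$-pure braid $\alpha$, and since $1$-pure braids preserve the property of not surrounding the first puncture, the disjoint curve found in standard position is transported back by $\alpha^{-1}$ while remaining in $\mathcal K_A$. Your pigeonhole argument instead works directly on an arbitrary curve: among the $n+1\geqslant 3$ non-first punctures, some complementary region of $\mathcal C$ contains at least two, and a curve enclosing exactly two of them is essential (as $2\leqslant n+1$), disjoint from $\mathcal C$, distinct from it, and lies in $\mathcal K_A$. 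This avoids Proposition \ref{P:Surj} and the equivariance observation entirely, so it is more elementary and self-contained; what the paper's standardization route buys is uniformity with the rest of the section, where the same $1$-pure reduction is the workhorse (and the analogous Proposition \ref{P:DenseTildeC} is proved by exactly your kind of direct argument). For the infinite-diameter consequence, your argument is essentially the paper's in contrapositive form: the paper picks a curve at distance greater than $M+1$ from $\Theta_n(P)$, moves it into $\mathcal K_A$ at cost $1$, and applies the Lipschitz inequality of Corollary \ref{C:MapTheta}, while your observation that the intrinsic metric $d_{\mathcal K_A}$ dominates the restriction of $d_{\mathbb D_{n+2}}$ is precisely that inequality, so the two diameter arguments coincide up to framing. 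You also correctly isolate the two points that need care --- the extremal case where the exterior of $\mathcal C$ carries no essential curve (handled uniformly by the pigeonhole) and the distinction between ambient and intrinsic metrics on the induced subgraph --- so there is no gap.
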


\begin{proof}
Let $\mathcal C$ be any curve in $\mathbb D_{n+2}$; we may suppose that $\mathcal C$ surrounds the first puncture. We need to find a curve $\mathcal C'$ in $\mathcal K_A$ 
such that $\mathcal C'$ and $\mathcal C$ are disjoint. Suppose first that $\mathcal C$ is 
standard; if it surrounds only the first two punctures --that is $\mathcal C=\mathcal C_{\{1\}}$-- we can take $\mathcal C'=\mathcal C_{\{3\}}$; otherwise $\mathcal C'=\mathcal C_{\{2\}}$ does the job. If $\mathcal C$ is not standard, by Proposition \ref{P:Surj}, there is a 1-pure braid $\alpha$ so that~$\mathcal C^{\alpha}$ is standard --and still surrounding the first puncture. By the above line of argument, there is $\mathcal C''$ in~$\mathcal K_A$ disjoint from $\mathcal C^{\alpha}$ and it suffices to take $\mathcal C'={\mathcal C''}^{\alpha^{-1}}$.

Let us show the second part of the statement. Let~$P$ be any vertex of $\mathcal C_{parab}(\widetilde A_n)$ and let $M>0$. We shall find a vertex $P'$ of $\mathcal C_{parab}(\widetilde A_n)$ so that ${d_{\widetilde A_n}(P,P')>M}$. We know that $\mathcal{CG}(\mathbb D_{n+2})$ has infinite diameter \cite[Proposition 4.6]{MasurMinsky1}; in particular there exists a curve $\mathcal C$ in~$\mathbb D_{n+2}$ so that ${d_{\mathbb D_{n+2}}(\Theta_n(P),\mathcal C)>M+1}$. As we have just seen, there is a curve $\mathcal C'$ from $\mathcal K_A$ such that ${d_{\mathbb D_{n+2}}(\mathcal C,\mathcal C')\leqslant 1}$. Let $P'$ be the proper irreducible parabolic subgroup of $A_{\widetilde A_n}$ such that $\Theta_n(P')=\mathcal C'$. We deduce by Corollary~\ref{C:MapTheta} that $d_{\widetilde A_n}(P,P')>M$.
\end{proof}

%
%We are now in position to prove Theorem \ref{T:TildeA}. 
%
%{\it{Proof of Theorem \ref{T:TildeA}}}. Part (i), the connectivity of $\mathcal C_{parab}(\widetilde A_n)$, can be proved with literally the same arguments as \cite[Lemma 5.2]{MorrisWright}.
%Part (ii) is exactly Proposition \ref{P:Isom}. Let us show (iii). 
%Let~$P$ be any vertex of $\mathcal C_{parab}(\widetilde A_n)$ and let $M>0$. We shall find a vertex $P'$ of $\mathcal C_{parab}(\widetilde A_n)$ so that ${d_{\widetilde A_n}(P,P')>M}$. We know that $C_{parab}(B_{n+1})$ has infinite diameter \cite[Corollary 4.13, Proposition 4.4]{CalvezWiest}; in particular there exists a vertex $Q$ of $\mathcal C_{parab}(B_{n+1})$ so that ${d_{B_{n+1}}(\theta(P),Q)>M+1}$. By Lemma~\ref{L:Dense}, there is a proper irreducible parabolic subgroup $P'$ of $A_{\widetilde A_n}$ such that ${d_{B_{n+1}}(Q,\theta(P'))\leqslant 1}$. It follows that $d_{B_{n+1}}(\theta(P),\theta(P'))>M$ and by Proposition \ref{P:Isom}, we deduce that $d_{\widetilde A_n}(P,P')>M$. \hfill $\Box$

It would be conceivable that the subgraph $\mathcal K_A$ is quasi-isometrically embedded in $\mathcal{CG}(\mathbb D_{n+2})$, which would imply the hyperbolicity of $\mathcal C_{parab}(\widetilde A_n)$. However, this is not the case, as we will now see. 
We first show that $\mathcal K_A$ matches hypothesis (i)-(iii) of Theorem \ref{T:Vokes}. 

\begin{lemma}\label{L:TwistFree}
(i) $\mathcal K_A$ is connected. (ii) The action of the pure braid group $PA_{A_{n+1}}$ on $\mathbb D_{n+2}$ induces an isometric action on $\mathcal K_A$. (iii) No annulus in $\mathbb D_{n+2}$ can be a witness for all vertices of~$\mathcal K_A$. 
\end{lemma}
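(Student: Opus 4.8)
The plan is to verify the three clauses directly using the description of $\mathcal K_A$ as the curves in $\mathbb D_{n+2}$ that do not surround the first puncture, together with the machinery already assembled.

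For part (i), connectedness, I would argue purely combinatorially at the level of curves. Given two vertices $\mathcal C, \mathcal C'$ of $\mathcal K_A$, the cleanest route is to exhibit a path inside $\mathcal K_A$. First I would reduce to the case of round curves: by Proposition \ref{P:Surj} every curve can be standardized by a $1$-pure braid, but more usefully, I want to connect any curve avoiding the first puncture to a fixed ``small'' round curve, say $\mathcal C_{\{2\}}$ (which surrounds only punctures $2$ and $3$ and hence lies in $\mathcal K_A$). The key observation is that any curve $\mathcal C$ not surrounding the first puncture lives, up to isotopy, in the subsurface $\mathbb D_{n+2}\setminus\{\text{puncture }1\}$, which is itself a punctured disk on $n+1$ punctures. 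I would invoke connectedness of the curve graph of that subsurface (valid since $n\geqslant 2$ gives at least $3$ remaining punctures) and note that a path there is a path in $\mathcal K_A$, because every curve essential in the subsurface and disjoint from $\mathcal C$ still avoids the first puncture. Alternatively and more self-containedly, since $\mathcal K_A \cong \mathcal C_{parab}(\widetilde A_n)$ via $\Theta_n$ (Corollary \ref{C:MapTheta}), this is exactly part (i) of Theorem \ref{T:TildeA}, which the introduction notes can be proven as in \cite[Lemma 5.2]{MorrisWright}; I would cite that argument.

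For part (ii), the isometric action, the point is that $PA_{A_{n+1}}$ acts on $\mathbb D_{n+2}$ fixing every puncture. Hence a pure braid carries a curve not surrounding the first puncture to another such curve: the set of punctures enclosed is preserved, so the first puncture stays outside. Thus $PA_{A_{n+1}}$ preserves the vertex set of $\mathcal K_A$; since it acts by homeomorphisms it preserves disjointness, hence adjacency, and therefore acts simplicially on $\mathcal K_A$. A simplicial action on a connected graph with the edge metric is automatically isometric, which gives the claim.

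Part (iii) is the one I expect to be the main obstacle, as it requires understanding the witnesses. Recall $X$ is a witness for $\mathcal K_A$ iff $X$ intersects every curve avoiding the first puncture. An annulus is a tubular neighborhood of some essential curve $\mathcal D$, and it is a witness iff $\mathcal D$ intersects every vertex of $\mathcal K_A$. So I must show: no single curve $\mathcal D$ meets every curve that avoids the first puncture. The strategy is to assume such a $\mathcal D$ exists and produce a vertex of $\mathcal K_A$ disjoint from it, contradicting the witness property. If $\mathcal D$ itself does not surround the first puncture, then $\mathcal D\in\mathcal K_A$, and $\mathcal D$ is a witness for itself is false since $X$ is never a witness for its own core curve (as noted in Section \ref{S:Vokes}); more simply, any curve in $\mathcal K_A$ disjoint from $\mathcal D$ exists because $\mathcal K_A$ is connected with more than one vertex and has no universal witness among its own members — here I would instead directly build a disjoint vertex. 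If $\mathcal D$ does surround the first puncture, I would standardize $\mathcal D$ by a $1$-pure braid $\alpha$ (Proposition \ref{P:Surj}), so $\mathcal D^\alpha$ is a round curve surrounding punctures $1,\dots,m+1$ for some $m$; then a round curve $\mathcal C'$ surrounding two punctures strictly to the right of the $\mathcal D^\alpha$-block (possible since $n\geqslant 2$ leaves enough room) is disjoint from $\mathcal D^\alpha$ and avoids the first puncture, so ${\mathcal C'}^{\alpha^{-1}}\in\mathcal K_A$ is disjoint from $\mathcal D$. In either case $\mathcal D$ fails to be a witness, so no annulus is a witness. The care needed is to check the counting: that whatever punctures $\mathcal D$ encloses, there remain at least two punctures available, together with the first puncture, outside $\mathcal D$ to carve out a disjoint curve avoiding puncture $1$; the hypothesis $n\geqslant 2$ (so $n+2\geqslant 4$ punctures) is precisely what guarantees this.
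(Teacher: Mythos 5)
Your part (ii) is correct and coincides with the paper's (one-line) argument, and your fallback for part (i) -- transporting connectedness of $\mathcal C_{parab}(\widetilde A_n)$ through the isomorphism $\Theta_n$, citing the argument of \cite[Lemma 5.2]{MorrisWright} as in the introduction -- is exactly what the paper does. However, your preferred ``self-contained'' route for (i) is flawed. If the subsurface is the complement of a disk neighborhood of $p_1$, it is an $(n+1)$-punctured annulus $X$, and its curve graph contains essential curves that enclose the inner boundary component, i.e.\ curves which \emph{do} surround $p_1$ in $\mathbb D_{n+2}$ (e.g.\ a curve enclosing the inner boundary together with one puncture is essential in $X$). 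Hence $\mathcal K_A$ is only a proper subgraph of $\mathcal{CG}(X)$, a path in $\mathcal{CG}(X)$ can leave $\mathcal K_A$, and your claim that every curve essential in the subsurface avoids the first puncture is false. Reading the subsurface instead as the disk with $p_1$ forgotten fails differently: a vertex of $\mathcal K_A$ enclosing all punctures except $p_1$ becomes boundary-parallel, hence inessential, once $p_1$ is erased.

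For part (iii) your overall strategy matches the paper (an annulus is a witness if and only if every vertex of $\mathcal K_A$ meets its core curve $\mathcal D$, so one exhibits a disjoint vertex), and the case where $\mathcal D$ does not surround $p_1$ is handled as in the paper. But in the case where $\mathcal D$ surrounds $p_1$, there is a genuine gap: you only build the disjoint curve in the exterior (``two punctures strictly to the right of the block''), and your closing ``counting check'' -- that whatever $\mathcal D$ encloses, at least two punctures remain outside -- is simply false: $\mathcal D$ may enclose $p_1$ together with $n$ other punctures, leaving a single puncture outside, and then no essential curve fits in the exterior. The paper resolves exactly this by a dichotomy: if the exterior of the core curve contains at least $2$ punctures, take $c$ outside; otherwise its interior contains $n+1\geqslant 3$ punctures and one takes $c$ \emph{inside} $\mathcal D$, not enclosing $p_1$. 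Your argument is easily repaired by adding this interior case (and then the standardization by a $1$-pure braid becomes unnecessary), but as written it fails for precisely the annuli whose core encloses all but one puncture.
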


\begin{proof}
(i) Recall that we already proved in the introduction that $\mathcal C_{parab}(\widetilde A_n)$ is connected.(ii) The restriction to the pure braids  of the natural action of $A_{A_{n+1}}$ on $\mathbb D_{n+2}$ provides a simplicial action of $PA_{A_{n+1}}$ on~$\mathcal K_A$. (iii) Given an essential curve $\mathcal C$ in $\mathbb D_{n+2}$, we will see that there always exists some curve $c$ in $\mathcal K_A$ which is disjoint from the annulus determined by $\mathcal C$. Assume first that $\mathcal C$ does not surround the first puncture, so that $\mathcal C$ is a curve in $\mathcal K_A$; then $\mathcal C$ itself can be isotoped so that it does not intersect the annulus it determines. 
Suppose on the contrary that $\mathcal C$ surrounds the first puncture; if the exterior of $\mathcal C$ contains at least 2 punctures, we can take $c$ to be any curve in the exterior of~$\mathcal C$. Otherwise the interior of $\mathcal C$ contains $n+1\geqslant 3$ punctures  and we can choose $c$ to be any curve surrounded by $\mathcal C$ and not enclosing the first puncture. 
\end{proof}

As the  next proposition is not needed in the sequel, its proof is only sketched and we refer the reader to \cite{vokes} for a precise statement of the results used throughout. We denote by $d_{\mathcal K_A}$ the distance in the graph $\mathcal K_A$. 

\begin{proposition}\label{P:NotQI}
The subgraph $\mathcal K_A$ is not quasi-isometrically embedded in $\mathcal{CG}(\mathbb D_{n+2})$. More precisely, given any $M>0$, there exists a pair of curves $a,b$ in $\mathbb D_{n+2}$ not surrounding the first puncture with the following properties: 
\begin{itemize}
\item $a,b$ are disjoint from the circle $\mathcal C_{\{1\}}$, so that $d_{\mathbb D_{n+2}}(a,b)\leqslant 2$,
\item  $d_{\mathcal K_A}(a,b)>M$.
\end{itemize}
\end{proposition}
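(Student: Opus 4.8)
The plan is to exploit the subsurface-projection machinery underlying Vokes's theorem (Theorem \ref{T:Vokes}): distance in a witness-controlled graph such as $\mathcal K_A$ is coarsely governed by the projections to its witnesses, so to separate two curves in $\mathcal K_A$ while keeping them close in the ambient curve graph it suffices to exhibit a single witness in which their projections are far apart. The witness I would use is the \emph{exterior of the circle $\mathcal C_{\{1\}}$}, that is the subsurface $\Sigma\subset\mathbb D_{n+2}$ bounded by $\partial\mathbb D_{n+2}$ and by $\mathcal C_{\{1\}}$ and carrying the punctures $3,\ldots,n+2$. Both curves $a,b$ will be supported inside $\Sigma$; being disjoint from $\mathcal C_{\{1\}}$ they automatically satisfy $d_{\mathbb D_{n+2}}(a,b)\leqslant 2$ (a path $a$--$\mathcal C_{\{1\}}$--$b$) and do not surround the first puncture, so the only real work is to force $d_{\mathcal K_A}(a,b)$ to be large.

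First I would record that $\Sigma$ is genuinely a witness for $\mathcal K_A$: any essential curve disjoint from $\Sigma$ must lie in the complementary twice-punctured disk bounded by $\mathcal C_{\{1\}}$, hence is isotopic to $\mathcal C_{\{1\}}$ itself, which surrounds the first puncture and is therefore \emph{not} a vertex of $\mathcal K_A$. Consequently every vertex of $\mathcal K_A$ crosses $\Sigma$, so the subsurface projection $\pi_\Sigma$ is defined on all of $\mathcal K_A$; moreover $\Sigma$ is not an annulus (for $n\geqslant 2$ it is a sphere with $n+2\geqslant 4$ holes), in accordance with Lemma \ref{L:TwistFree}(iii).

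Next I would build $a,b$. Set $a=\mathcal C_{\{3\}}$, a round curve lying in $\Sigma$, and let $b=a^{\psi^N}$, where $\psi$ is a pseudo-Anosov map supported on $\Sigma$ (extended by the identity over the interior of $\mathcal C_{\{1\}}$). Since $\psi$ fixes $\mathcal C_{\{1\}}$ and preserves $\Sigma$, the curve $b$ again lies in $\Sigma$, so $a,b$ are disjoint from $\mathcal C_{\{1\}}$ as required. Because a pseudo-Anosov acts on $\mathcal{CG}(\Sigma)$ with positive asymptotic translation length \cite{MasurMinsky2}, we have $d_{\mathcal{CG}(\Sigma)}(a,b)=d_{\mathcal{CG}(\Sigma)}(a,a^{\psi^N})\to\infty$ as $N\to\infty$, so $N$ can be chosen to make $d_{\mathcal{CG}(\Sigma)}(a,b)$ as large as we please.

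Finally, to convert this into a lower bound on $d_{\mathcal K_A}(a,b)$ I would invoke the coarse-Lipschitz behaviour of $\pi_\Sigma$ along edges of $\mathcal K_A$: two adjacent vertices of $\mathcal K_A$ are disjoint curves, both crossing the witness $\Sigma$, hence their $\Sigma$-projections lie within a uniform distance $B$ in $\mathcal{CG}(\Sigma)$ by the standard projection estimate for disjoint curves. Chaining this bound along a geodesic of $\mathcal K_A$ joining $a$ to $b$, and using $\pi_\Sigma(a)=a$ and $\pi_\Sigma(b)=b$ since both lie in $\Sigma$, yields $d_{\mathcal{CG}(\Sigma)}(a,b)\leqslant B\cdot d_{\mathcal K_A}(a,b)$; taking $N$ with $d_{\mathcal{CG}(\Sigma)}(a,b)>B\,M$ then gives $d_{\mathcal K_A}(a,b)>M$. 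I expect this final inequality --- the fact that distance in $\mathcal K_A$ dominates the projection distance to the witness $\Sigma$ --- to be the crux of the argument, and it is precisely the point at which Vokes's distance estimate \cite{vokes} is doing the work; a secondary technical nuisance is the case $n=2$, where $\Sigma$ is a four-holed sphere and one must pass to the Farey-graph model of $\mathcal{CG}(\Sigma)$, though both the projection estimate and the positivity of the pseudo-Anosov translation length survive unchanged.
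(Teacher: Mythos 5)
Your proof is correct, and it is genuinely lighter than the one in the paper, even though both arguments use the same witness (the exterior $\Sigma$ of $\mathcal C_{\{1\}}$, an $(n+1)$-times punctured disk) and the same mechanism for producing $b$ (a high power of a pseudo-Anosov supported on $\Sigma$ applied to a curve $a\subset\Sigma$). The difference lies entirely in how the lower bound on $d_{\mathcal K_A}(a,b)$ is extracted. The paper invokes the full distance formula of \cite[Corollary 1.2]{vokes}: it fixes a threshold $C>C_0$, obtains constants $K_1,K_2$, arranges $d_\Sigma(a,b)>\max\{C,MK_1+K_2\}$ so that the $\Sigma$-term survives the cutoff, and reads off $d_{\mathcal K_A}(a,b)\geqslant (d_\Sigma(a,b)-K_2)/K_1>M$. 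You instead chain the elementary bounded-projection estimate along a geodesic of $\mathcal K_A$: consecutive vertices are disjoint curves, each crossing the witness $\Sigma$, so their $\Sigma$-projections are uniformly close (the Behrstock/Masur--Minsky estimate, with the Farey-graph convention handling $n=2$ exactly as you say), giving $d_{\mathcal{CG}(\Sigma)}(a,b)\leqslant B\, d_{\mathcal K_A}(a,b)$. This only uses the ``easy'' coarse-Lipschitz direction of subsurface projection and is self-contained, whereas the distance formula is a deep hierarchical result --- so your one attribution slip should be flagged: the crux inequality is \emph{not} where Vokes's estimate does the work; her formula gives the much stronger two-sided comparison that the paper uses, while your argument needs nothing beyond \cite{MasurMinsky2}-style projection bounds (and the loxodromic action is \cite[Proposition 4.6]{MasurMinsky1}, not \cite{MasurMinsky2}). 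The paper's choice is natural in context, since the hypotheses of Theorem \ref{T:Vokes} had already been verified in Lemma \ref{L:TwistFree} and the authors explicitly frame the proposition as a corollary of that machinery; your route buys independence from \cite{vokes} for this particular proposition, at no real cost.
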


\begin{proof}
In view of Lemma \ref{L:TwistFree} (see also Remark \ref{R:Vokes}) and according to \cite[Corollary 1.2]{vokes}, we have a \emph{distance formula} in $\mathcal K_A$ from which we can deduce the claim. 
Fix $M>0$. Fix any curve $a$ not surrounding the first puncture and disjoint from the circle~$\mathcal C_{\{1\}}$. Let $D$ be the subdisk in $\mathbb D_{n+2}$ enclosed by~$\mathcal C_{\{1\}}$ and let $X=\mathbb D_{n+2}\setminus D$. Note that the subsurface~$X$ is homeomorphic to a disk with $(n+1)$ punctures and 
is a witness for every vertex of~$\mathcal K_A$. 
Note also that $a$ is a curve in~$X$. Let $C_0$ be the constant associated to $\mathcal K_A$ by \cite[Corollary 1.2]{vokes}, let $C>C_0$ and let $K_1=K_1(C)$, $K_2=K_2(C)$ as given by \cite[Corollary 1.2]{vokes}. Define an element $f$ of $A_{A_{n+1}}$  by choosing a pseudo-Anosov mapping class of $X$ which fixes each puncture of $X$ (a pseudo-Anosov pure braid on $(n+1)$ strands) and doubling its first strand. Then~$f$ acts loxodromically on the curve graph of $X$ \cite[Proposition~4.6]{MasurMinsky1} and, choosing $b$ as the image of~$a$ under a sufficiently high power of $f$, 
we can arrange that the distance $d_X(a,b)$ (in the curve graph of $X$) is bigger than $\max\{C, MK_1+K_2\}$. Notice that~$b$ is disjoint from~$\mathcal C_{\{1\}}$ and that $b$ does not surround the first puncture in $\mathbb D_{n+2}$. The distance formula \cite[Corollary 1.2]{vokes} then says in particular that 
$K_1.d_{\mathcal K_A}(a,b)+K_2$ is bounded from below by a sum of positive terms to which $d_X(a,b)$ contributes. It follows in particular that 
$d_{\mathcal K_A}(a,b)\geqslant \frac{d_X(a,b)-K_2}{K_1}>M$, as desired. This finishes the proof of Proposition~\ref{P:NotQI}.
\end{proof}

We are now ready for proving the hyperbolicity of $\mathcal C_{parab}(\widetilde A_n)$. As $\mathcal C_{parab}(\widetilde A_n)$ is isomorphic to~$\mathcal K_A$, it suffices to prove that $\mathcal K_A$ is hyperbolic. This will follow from Theorem \ref{T:Vokes} after we check the remaining hypothesis (iv). The next lemma describes all possible witnesses for $\mathcal K_A$; its proof will achieve the demonstration. Throughout, $p_1$ denotes the first puncture of $\mathbb D_{n+2}$. 

\begin{lemma}\label{L:Witness}
Let $X$ be a subsurface of $\mathbb D_{n+2}$. Then $X$ is a witness for $\mathcal K_A$ if and only if one of the following holds.

\begin{itemize}
\item[(i)] $X=\mathbb D_{n+2}$ or $X=\mathbb D_{n+2}\setminus D$, where $D$ is the interior of an essential curve surrounding $p_1$ and exactly one other puncture.  
\item[(ii)] $X$ is the interior of an essential curve surrounding $p_1$ and $n$ other punctures.
\item[(iii)] $X=X'\setminus D$, where $X'$ is the interior of an essential curve surrounding $p_1$ and $n$ other punctures  and $D$ is the interior of an essential curve surrounding $p_1$ and exactly one other puncture.  
\end{itemize}
We will say that $X$ is a witness of type (i), (ii) or (iii). Two witnesses for $\mathcal K_A$ are never disjoint.
\end{lemma}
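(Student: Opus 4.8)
The plan is to work throughout with the reformulation that a subsurface $X$ is a witness for $\mathcal{K}_A$ precisely when \emph{every} essential curve of $\mathbb{D}_{n+2}$ disjoint from $X$ surrounds $p_1$; this is just the contrapositive of the defining property, since the vertices of $\mathcal{K}_A$ are exactly the curves \emph{not} surrounding $p_1$. The first fact I would record is that every essential boundary component of $X$ surrounds $p_1$: such a component is disjoint from $X$ (a boundary can be pushed off, as noted before the statement), so were it not to surround $p_1$ it would be a vertex of $\mathcal{K}_A$ disjoint from $X$, contradicting that $X$ is a witness.

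Next I would analyse the essential boundary of $X$. Two disjoint essential curves both surrounding $p_1$ are necessarily nested, so the essential boundary components of $X$ form a chain under inclusion of interiors; since $X$ is connected it can meet only the two curves bounding a single annular slot, so $X$ has at most two essential boundary components. This yields three cases. If $X$ has no essential boundary, then $X=\mathbb{D}_{n+2}$ (type (i)). If $X$ has one essential boundary $\gamma$ around $p_1$, then $X$ is the interior or the exterior of $\gamma$: in the interior case the complement lies outside $\gamma$ and contains no $p_1$, so for $X$ to be a witness it must carry no essential curve, forcing it to have at most one puncture and hence $\gamma$ to surround $p_1$ and $n$ others (type (ii)); in the exterior case the complementary disk bounded by $\gamma$ may contain at most one puncture besides $p_1$, forcing $\gamma$ to surround $p_1$ and exactly one other puncture (type (i)). If $X$ has two essential boundaries $\gamma\subset\gamma'$ around $p_1$, then $X$ is the region between them, and the same puncture-counting applied to the inner disk and to the exterior of $\gamma'$ pins down $\gamma$ (around $p_1$ and one other) and $\gamma'$ (around $p_1$ and $n$ others), giving type (iii). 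For the converse I would check each type directly: in every case each complementary region is either a disk surrounding $p_1$ together with at most one further puncture (carrying only curves around $p_1$) or contains at most one puncture (carrying no essential curve), and every essential boundary surrounds $p_1$; hence no vertex of $\mathcal{K}_A$ is disjoint from $X$, so $X$ is a witness.

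For the final assertion the key lemma is: if $A$ is a witness and $B$ is a subsurface disjoint from $A$, then $B$ contains no essential curve avoiding $p_1$, since any such curve would be disjoint from $A$ and a vertex of $\mathcal{K}_A$. Applied in both directions to a hypothetical disjoint pair of witnesses $X_1,X_2$, this shows neither contains an essential curve avoiding $p_1$, equivalently neither contains two punctures different from $p_1$. Inspecting the three types, for $n\geqslant 3$ every witness does contain at least two punctures other than $p_1$ (the whole disk; the complement of a twice-punctured disk, with $n\geqslant 2$ outer punctures; the big disk, with $n$ non-$p_1$ punctures; and the type (iii) annulus, with $n-1\geqslant 2$ interior punctures), so no disjoint pair exists. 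The only surviving case is $n=2$, where the sole witnesses without such a curve are the type (iii) annuli, each carrying a single non-$p_1$ puncture. Two disjoint such annuli have four boundary curves, all surrounding $p_1$ and pairwise disjoint, hence totally nested; disjointness forces their annular slots to be disjoint intervals of this chain, so one annulus lies inside the inner boundary of the other. But the outer boundary of the inner annulus encloses three punctures while the inner boundary of the outer annulus encloses only two, contradicting the nesting. This rules out disjoint witnesses for all $n\geqslant 2$.

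The main obstacle I anticipate is exactly this degenerate case $n=2$ of type (iii): the bad-curve argument alone does not separate two such annular witnesses, and one must argue geometrically via the total nesting of the four boundary curves around $p_1$ together with their puncture counts. A secondary technical point, pervasive throughout, is the careful bookkeeping of which essential curves are disjoint from a given subsurface---namely the boundary-parallel curves and those carried by the complementary regions---which underlies both the classification and the final non-disjointness step.
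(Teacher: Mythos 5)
Your proof is correct, and for the classification it follows essentially the same route as the paper: essential boundary components of $X$ are disjoint from $X$ (and $X$ is not a witness for them), so they must surround $p_1$; nestedness plus connectedness of $X$ bounds their number; and puncture counts on the complementary regions pin down types (i)--(iii). You organize the cases by the number of essential boundary curves ($0$, $1$ or $2$) where the paper splits according to whether $\partial\mathbb D_{n+2}$ is a boundary component of $X$, but this is only a cosmetic difference -- in fact your preliminary observation that \emph{all} essential boundary curves surround $p_1$, hence form a nested chain, slightly streamlines the paper's ``two boundary curves cannot be nested, so one avoids $p_1$'' step. The genuine added value is in the last assertion: the paper dismisses it with ``a direct case-by-case inspection'', whereas you give an actual argument -- a witness disjoint from another witness can contain at most one puncture other than $p_1$, which kills all pairs except, when $n=2$, two type-(iii) annuli, and you correctly resolve that residual case via total nesting of the four boundary curves and the incompatible puncture counts ($3\leqslant 2$). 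This is exactly the delicate configuration a careless ``inspection'' could miss, and your treatment of it is sound. One small wording slip, which does not affect correctness: in the one-boundary interior case you say the complement ``must carry no essential curve''; in fact a region with at most one puncture does carry essential curves of $\mathbb D_{n+2}$, namely those parallel to its essential boundary -- but these surround $p_1$ and are benign, as your own reformulation (every curve disjoint from $X$ surrounds $p_1$) and your boundary clause in the converse check already accommodate. The precise statement you need, and actually use, is that the complement carries no essential curve \emph{not surrounding} $p_1$, which is what at most one puncture (together with the boundary curves surrounding $p_1$) guarantees.
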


\begin{proof}
The three types of subsurfaces in Lemma \ref{L:Witness} are depicted in Figure \ref{F:Witness}. 
First, we check that all subsurfaces (i)-(iii) are witnesses for $\mathcal K_A$: we see that the only curves which can fail to be witnessed by $X$ must surround the first puncture.
%
%Suppose that $X$ is of type (i). We may assume that $X\neq \mathbb D_{n+2}$, so that $X=\mathbb D_{n+2}\setminus D$. The only curve in $\mathbb D_{n+2}$ which can be realized disjointly from~$X$ is the boundary of $D$, which is not a curve in $\mathcal K'$. Suppose that $X$ is of type (ii): again, the only curve in $\mathbb D_{n+2}$ which can be realized disjointly from $X$ is  the boundary of $X$, which is not a curve in $\mathcal K'$. Suppose that $X=X'\setminus D$ is of type (iii): only two curves in $\mathbb D_{n+2}$ can be realized disjointly from~$X$, namely the respective boundaries of $D$ and $X'$; however none is a curve in $\mathcal K'$.  
\begin{figure}
\center
\includegraphics[scale=1]{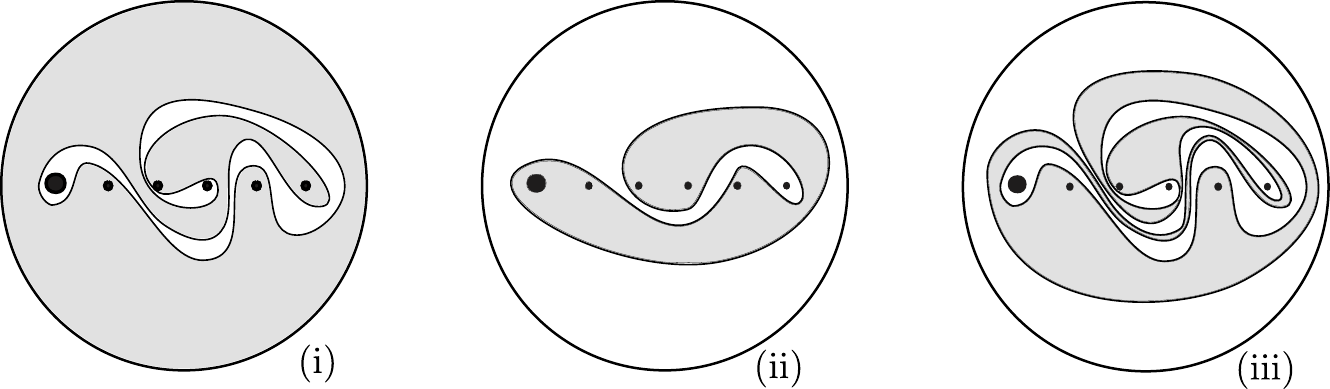}
\caption{Shaded, the different types of witnesses for $\mathcal K'$, the big dot represents $p_1$.}\label{F:Witness}
\end{figure}
Conversely, let $X$ be a witness for $\mathcal K_A$. We shall distinguish two cases. 

{\it{First case.}} Suppose that $\partial \mathbb D_{n+2}$ is a boundary component of~$X$. Assume that $X\neq \mathbb D_{n+2}$. Therefore, there is at least some essential curve $\mathcal C$ of $\mathbb D_{n+2}$ which is a boundary component of~$X$. 
Assume that $\mathcal C'$ is another essential curve of $\mathbb D_{n+2}$ which is a boundary component of $X$. Notice that~$\mathcal C$ and~$\mathcal C'$ cannot be nested as $X$ has to be connected. Then at least one of $\mathcal C$ or $\mathcal C'$ does not surround $p_1$ and this provides a particular curve of $\mathcal K_A$ for which $X$ is not a witness, a contradiction. Therefore~$X$ has exactly one essential curve $\mathcal C$ of $\mathbb D_{n+2}$ as a boundary component and~$\mathcal C$ must surround $p_1$. Moreover, $\mathcal C$ must surround exactly 2 punctures, otherwise there 
would exist a curve~$c$ in the interior of $\mathcal C$, not surrounding $p_1$, and $X$ would fail to be a witness for this curve~$c$. Letting~$D$ be the interior of $\mathcal C$, we have shown that whenever~$X$ has $\partial \mathbb D_{n+2}$ as a boundary component, $X=\mathbb D_{n+2}\setminus D$ has to be of type (i). 

{\it{Second case.}} Suppose that the boundary $\partial \mathbb D_{n+2}$ is not a boundary component of $X$. As $X$ is connected, $X$ has exactly one outermost boundary component which is an essential curve $\mathcal C$ of~$\mathbb D_{n+2}$. Again, $\mathcal C$ must surround $p_1$, otherwise $\mathcal C$ is a curve from $\mathcal K_A$ which is disjoint from $X$ and~$X$ fails to be a witness for $\mathcal K_A$. Moreover, $\mathcal C$ must surround $(n+1)$ punctures, otherwise the exterior of $\mathcal C$ would contain at least two punctures and there would exist a curve~$c$ from $\mathcal K_A$ entirely contained in $\mathbb D_{n+2}\setminus X$, contradicting that $X$ is a witness for $\mathcal K_A$. If $X$ has no other boundary component, we have shown that $X$ is of type (ii). 

Finally, suppose that $X$ has another boundary component. This must be an essential curve $\mathcal C'$ of~$\mathbb D_{n+2}$ which is nested in $\mathcal C$. Let $\mathcal C''$ be another putative boundary component of $X$ nested in $\mathcal C$. Then $\mathcal C'$ and $\mathcal C''$ cannot be nested, as $X$ is connected. Therefore only one of $\mathcal C'$, $\mathcal C''$ can surround~$p_1$: one of $\mathcal C',\mathcal C''$ is a curve from $\mathcal K_A$ for which $X$ is not a witness, contradiction. Therefore there is exactly one boundary component $\mathcal C'$ of $X$ nested in $\mathcal C$ and $\mathcal C'$ must surround $p_1$. Moreover, $\mathcal C'$ must surround exactly 2 punctures, for the same reasons as in the first case. Taking $X'$ to be the interior of $\mathcal C$ and $D$ to be the interior of $\mathcal C'$, we have shown that $X=X'\setminus D$ is of type (iii).   

Finally, the last claim follows from a direct case-by-case inspection.
%, using the following observations. 
%
%\begin{itemize}
%\item[(a)] Let $X$ and $Y$ be subsurfaces, let $\mathcal C$ be a boundary component of $X$ and $\mathcal C'$ be a boundary component of $Y$; if $\mathcal C$ and $\mathcal C'$ are not disjoint, then $X$ and $Y$ are not disjoint. 
%%\item[(b)] Let $X$ be a subsurface and let $\mathcal C$ be a boundary component of $X$. If $\mathcal C'$ is a curve which cannot be realized disjointly from $\mathcal C$, then $\mathcal C'$ cannot be realized disjointly from $X$. 
%\item[(b)] If $\mathcal C$ and $\mathcal C'$ are two curves whose interior contains $p_1$ and exactly one other puncture,  then~$\mathcal C$ and $\mathcal C'$ are not disjoint unless they are equal. 
%\item[(c)] If $\mathcal C$ and $\mathcal C'$ both surround $n+1$ punctures, then $\mathcal C$ and $\mathcal C'$ are not disjoint unless they are equal.
%\end{itemize} 
\end{proof}

The proof of the statements of Theorem \ref{T:TildeA} concerning $\mathcal C_{parab}(\widetilde A_n)$ is now complete: (i) was seen in the introduction, (ii) is the statement of Corollary \ref{C:MapTheta}, (iii) is the statement of Proposition \ref{P:DenseTildeA} and (iv) results from Lemmas \ref{L:TwistFree}, \ref{L:Witness} and Theorem \ref{T:Vokes}. 

We conclude with a generalization of Propositions \ref{P:Center} and \ref{P:Adjacent} to the case of the Artin-Tits group~$A_{\widetilde A_n}$. To the best of our knowledge, this was not previously written in the literature. Before proceeding, notice that if $P$ is a proper irreducible parabolic subgroup of $A_{\widetilde A_n}$, $P$ is an irreducible Artin-Tits group of \emph{spherical} type; therefore we can define the central element $z_P$ of~$P$, as in Section \ref{S:ArtinGroups}.  

\begin{proposition}\label{P:CenterATilde}
Let $P$ and $Q$ be two proper irreducible parabolic subgroups of $A_{\widetilde A_n}$. Let~${g\in A_{\widetilde A_n}}$. 
Then $P^g=Q$ if and only if $z_P^g=z_Q$.
\end{proposition}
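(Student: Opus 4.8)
The plan is to transfer the statement to the group $A_{B_{n+1}}$, which is of spherical type, via the monomorphism $\theta_n$, and then invoke Proposition \ref{P:Center}. The crucial preliminary observation is that $\theta_n$ \emph{preserves exponent sums}: each standard generator $\widetilde\sigma_i$ is sent to an element of exponent sum $1$ in $A_{B_{n+1}}$ (for $i\geq 1$ this is immediate, and for $i=0$ the defining word $\tau_{n+1}^{-1}\ldots\tau_3^{-1}\tau_1\tau_2\tau_1^{-1}\tau_3\ldots\tau_{n+1}$ also has exponent sum $-(n-1)+1+(n-1)=1$). Hence the two homomorphisms $\epsilon_{B_{n+1}}\circ\theta_n$ and $\epsilon_{\widetilde A_n}$ agree on generators, so $\epsilon_{B_{n+1}}(\theta_n(x))=\epsilon_{\widetilde A_n}(x)$ for every $x\in A_{\widetilde A_n}$.

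Next I would pin down how the central elements correspond. Let $P$ be a proper irreducible parabolic subgroup of $A_{\widetilde A_n}$; by Proposition \ref{P:Map}, $\theta_n(P)$ is a proper irreducible parabolic subgroup of the spherical-type group $A_{B_{n+1}}$, so it has a well-defined central element $z_{\theta_n(P)}$. Since $\theta_n$ is injective, its restriction is an isomorphism $P\to\theta_n(P)$ carrying the (infinite cyclic) center of $P$ onto that of $\theta_n(P)$; thus $\theta_n(z_P)$ generates the center of $\theta_n(P)$, and so $\theta_n(z_P)=z_{\theta_n(P)}^{\pm 1}$. The sign is forced to be $+1$, because $\epsilon_{B_{n+1}}(\theta_n(z_P))=\epsilon_{\widetilde A_n}(z_P)>0$ by the previous paragraph while $z_{\theta_n(P)}$ is itself chosen with positive exponent sum. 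Hence $\theta_n(z_P)=z_{\theta_n(P)}$, and likewise $\theta_n(z_Q)=z_{\theta_n(Q)}$.

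With these identifications the equivalence becomes a routine transfer. Applying $\theta_n$ to $P^g=Q$ gives $\theta_n(P)^{\theta_n(g)}=\theta_n(Q)$; Proposition \ref{P:Center} in $A_{B_{n+1}}$ then yields $z_{\theta_n(P)}^{\theta_n(g)}=z_{\theta_n(Q)}$, that is $\theta_n(z_P^g)=\theta_n(z_Q)$, whence $z_P^g=z_Q$ by injectivity of $\theta_n$. Conversely, from $z_P^g=z_Q$ the same identities give $z_{\theta_n(P)}^{\theta_n(g)}=z_{\theta_n(Q)}$, so Proposition \ref{P:Center} returns $\theta_n(P)^{\theta_n(g)}=\theta_n(Q)$, i.e. $\theta_n(P^g)=\theta_n(Q)$, and injectivity of $\theta_n$ gives $P^g=Q$.

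I expect the main obstacle to be the middle step, namely correctly identifying $\theta_n(z_P)$ with $z_{\theta_n(P)}$: this is where one must combine the fact that $\theta_n$ maps parabolics to parabolics (Proposition \ref{P:Map}) with the exponent-sum computation that removes the sign ambiguity. Everything else is a formal consequence of the injectivity of $\theta_n$ together with the already-established Proposition \ref{P:Center} for spherical type.
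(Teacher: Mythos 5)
Your proof is correct and takes essentially the same route as the paper's: both arguments transfer the statement to the spherical-type group $A_{B_{n+1}}$ via $\theta_n$, identify $\theta_n(z_P)=z_{\theta_n(P)}$ by combining Proposition \ref{P:Map} with the exponent-sum computation (including the same check for $\widetilde\sigma_0$), and then conclude with Proposition \ref{P:Center} and injectivity of $\theta_n$. The only cosmetic difference is that the paper dismisses the forward implication as obvious (conjugation carries center to center), whereas you route it through the same transfer, which is equally valid.
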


\begin{proof}
The direct implication is obvious, considering the centers. Let us prove the converse. Given any proper irreducible parabolic subgroup $P$ of $A_{\widetilde A_n}$, Proposition \ref{P:Map} says that $\theta_n(P)$ is a proper irreducible subgroup of $A_{B_{n+1}}$. As $P$ and $\theta_n(P)$ are isomorphic, $\theta_n(z_P)$ generates the center of~$\theta_n(P)$. By the formulae defining $\theta_n$, we see that the exponent sums $\epsilon_{B_{n+1}}(\theta_n(x))$ and~$\epsilon_{\widetilde A_n}(x)$ coincide, for every $x\in A_{\widetilde A_n}$. We deduce that $\theta_n(z_P)=z_{\theta_n(P)}$, for every proper irreducible subgroup of $A_{\widetilde A_n}$. Now, assume $z_P^g=z_Q$. We have on the one hand $\theta_n(z_Q)=z_{\theta_n(Q)}$ while on the other hand, 
$$\theta_n(z_P^g)=\theta_n(z_P)^{\theta_n(g)}=z_{\theta_n(P)}^{\theta_n(g)}=z_{\theta_n(P)^{\theta_n(g)}}=z_{\theta_n(P^g)}.$$
We deduce $z_{\theta_n(P^g)}=z_{\theta_n(Q)}$ whence by Proposition \ref{P:Center}, $\theta_n(P^g)=\theta_n(Q)$ and by injectivity of $\theta_n$, $P^g=Q$ as desired. 
\end{proof}

\begin{proposition}\label{P:AdjATilde2}
Let $P$ and $Q$ be two distinct proper irreducible parabolic subgroups of $A_{\widetilde A_n}$. Then 
$P$ and $Q$ are adjacent if and only if $z_P$ and $z_Q$ commute.
\end{proposition}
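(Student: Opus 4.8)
The plan is to reduce this statement about the Euclidean group $A_{\widetilde A_n}$ to the corresponding fact about the spherical-type group $A_{B_{n+1}}$, using the embedding $\theta_n$ exactly as was done for Proposition~\ref{P:CenterATilde}. The key observation, already established in the proof of Proposition~\ref{P:CenterATilde}, is that $\theta_n$ preserves central elements in the strong sense that $\theta_n(z_P)=z_{\theta_n(P)}$ for every proper irreducible parabolic subgroup $P$ of $A_{\widetilde A_n}$; this rests on the fact that $\epsilon_{B_{n+1}}\circ\theta_n=\epsilon_{\widetilde A_n}$, so that $\theta_n$ carries the distinguished positive central generator to the distinguished positive central generator.

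First I would prove the forward implication, which is the easy one: if $P$ and $Q$ are adjacent, then by Definition~\ref{D:CParab} either one contains the other or they commute elementwise. In the inclusion case, say $P\subset Q$, the central element $z_P$ lies in $P\subset Q$, and since $z_Q$ is central in $Q$ it commutes with $z_P$. In the commuting case, $z_P\in P$ and $z_Q\in Q$ commute directly. So adjacency always forces $[z_P,z_Q]=1$.

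For the converse, suppose $z_P$ and $z_Q$ commute in $A_{\widetilde A_n}$. Applying the injective homomorphism $\theta_n$, the images $\theta_n(z_P)=z_{\theta_n(P)}$ and $\theta_n(z_Q)=z_{\theta_n(Q)}$ commute in $A_{B_{n+1}}$. By Proposition~\ref{P:Map}, $\theta_n(P)$ and $\theta_n(Q)$ are proper irreducible parabolic subgroups of the spherical-type group $A_{B_{n+1}}$, so Proposition~\ref{P:Adjacent} applies: either $\theta_n(P)=\theta_n(Q)$, or these two subgroups are adjacent in $\mathcal C_{parab}(B_{n+1})$. In the equality case, injectivity of $\theta_n$ gives $P=Q$, which is excluded by hypothesis; so $\theta_n(P)$ and $\theta_n(Q)$ are genuinely adjacent. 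By Proposition~\ref{P:AdjTildeA}, adjacency of $\theta_n(P)$ and $\theta_n(Q)$ in $\mathcal C_{parab}(B_{n+1})$ is equivalent to adjacency of $P$ and $Q$ in $\mathcal C_{parab}(\widetilde A_n)$, which is what we wanted.

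The main point to watch is that Proposition~\ref{P:Adjacent} is stated only for \emph{distinct} parabolic subgroups, so one must separate out the possibility $z_{\theta_n(P)}=z_{\theta_n(Q)}$ before invoking it; this is precisely the step where injectivity of $\theta_n$ and the hypothesis $P\neq Q$ are used to rule out coincidence. Everything else is a transparent transport of structure along $\theta_n$, reusing the identity $\theta_n(z_P)=z_{\theta_n(P)}$ from the preceding proof. I expect no genuine obstacle here, since Proposition~\ref{P:AdjTildeA} already packages the equivalence of adjacency under $\theta_n$; the only care needed is to confirm that the commutation hypothesis transfers cleanly via injectivity rather than merely in the image.
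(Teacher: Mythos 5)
Your proof is correct and follows essentially the same route as the paper: the easy direction via Definition~\ref{D:CParab}, and the converse by transporting the commutation through $\theta_n$ using $\theta_n(z_P)=z_{\theta_n(P)}$, then invoking Propositions~\ref{P:Adjacent} and~\ref{P:AdjTildeA}. Your explicit check that $\theta_n(P)\neq\theta_n(Q)$ before applying Proposition~\ref{P:Adjacent} is a small point of care that the paper leaves implicit, but it is the same argument.
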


\begin{proof}
Suppose that $P$ and $Q$ are adjacent --see Definition \ref{D:CParab}. Suppose $P\subset Q$; then as $z_Q$ is central in $Q$, $z_P$ and $z_Q$ commute. Similarly if $Q\subset P$. 
Otherwise, 
$P\cap Q=\{1\}$ and $pq=qp$ for each $p\in P$ and $v\in Q$; in particular, $z_{P}$ and $z_{Q}$ commute. 

Conversely, suppose that $z_P$ and $z_Q$ commute. Then $\theta_n(z_P)=z_{\theta_n(P)}$ and $\theta_n(z_Q)=z_{\theta_n(Q)}$ commute, whence by Proposition \ref{P:Adjacent}, $\theta_n(P)$ and $\theta_n(Q)$ are adjacent in $\mathcal C_{parab}(B_{n+1})$. By Proposition~\ref{P:AdjTildeA}, this implies that $P$ and $Q$ are adjacent. 
\end{proof}

\section{The graph $\mathcal C_{parab}(\widetilde C_n)$}\label{S:TildeC}
Let us start with a description of the proper irreducible standard parabolic subgroups of $A_{\widetilde C_n}$. A proper irreducible standard parabolic subgroup of $A_{\widetilde C_n}$ is determined by a proper subinterval of $[n+1]$: for any proper subinterval $I$ of~$[n+1]$, we denote by $\widetilde C_I$ the proper irreducible standard parabolic subgroup of $A_{\widetilde C_n}$ generated by ${\{\widetilde \tau_i\ |\  i\in I\}}$. 

%{\color{red} Notice that, by Lemma \ref{L:Corollary}, the standard generators of $A_{\widetilde C_n}$ fall in three conjugacy classes, namely $\widetilde \tau_1$ ($\widetilde \tau_{n+1}$, respectively) is the unique standard generator in its conjugacy class while each $\widetilde \tau_i$, $2\leqslant i\leqslant n$, is conjugate to each other. }
%{\color{blue} In view of Lemma \ref{L:ConjugacyParabolic}, each parabolic subgroup $P$ of $A_{\widetilde C_n}$ is of exactly one of the following types: 
%\begin{itemize}
%\item Type A if $P$ is conjugated to $\widetilde C_I$ for $I\subset \{1,\ldots,n-1\}$, 
%\item Type B1 if $P$ is conjugated to $\widetilde C_I$ with $0\in I$,
%\item Type B2 if $P$ is conjugate to $\widetilde C_I$ with $n\in I$.
%\end{itemize}}

The following facts can be found in \cite[Section 4]{Allcock}. There is a monomorphism 
\begin{align*}
    \lambda_n :   A_{\widetilde C_n} & \longrightarrow A_{A_{n+1}} \\
          \widetilde\tau_i & \longmapsto \begin{cases} \sigma_i^2 & \text{if $i=1$ or $i=n+1$},\\
          \sigma_{i} & \text{if $2\leqslant i\leqslant n$,}
\end{cases}
\end{align*}
The image of $\lambda_n$ is the subgroup $\mathfrak P$ of $(n+2)$-strands braids in which the first strand ends in the first position and the $(n+2)$nd strand ends in the $(n+2)$nd position. In other words, a braid $y$ on $(n+2)$ strands 
belongs to $\mathfrak P$ if and only if $\pi_y(1)=1$ and $\pi_y(n+2)=n+2$, where
$\pi_y=\pi(y)$ is the permutation in $\mathfrak S_{n+2}=W_{A_{n+1}}$ associated to $y$. We shall call these braids $(1,(n+2))$-\emph{pure}.

Although $A_{\widetilde C_n}$ is not of spherical type, we observe that each proper irreducible parabolic subgroup of $A_{\widetilde C_n}$ is an irreducible Artin-Tits group of spherical type. This allows to associate to each proper irreducible parabolic subgroup $P$ of $A_{\widetilde C_n}$ its central element $z_P$, as in Section \ref{S:ArtinGroups}.
Given a proper subinterval $I$ of $[n+1]$, the formula for $z_{\widetilde C_I}$ is very similar to the formulae 
given in Section \ref{S:B} and we do not write it explicitly. The following is the analogue of Lemma \ref{L:Comput}:

\begin{lemma}\label{L:Comput2}
Let $I$ be a proper subinterval of $[n+1]$. Then 
\begin{itemize}
\item[(i)] $\lambda_n(\widetilde C_I)=A_{I}\cap \mathfrak P$, 
\item[(ii)] $\lambda_n(z_{\widetilde C_I})=\begin{cases} z_{A_{I}} & \text{if $I\neq \{1\}, \{n+1\}$}\\
z_{A_{I}}^2 & \text{if $I=\{1\}$ or $I=\{n+1\}$}.
\end{cases}$
\end{itemize}
\end{lemma}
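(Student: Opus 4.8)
The plan is to mirror the proof of Lemma \ref{L:Comput}, treating the two endpoints $1$ and $n+1$ symmetrically. Since $I$ is a \emph{proper} subinterval of $[n+1]$, it cannot contain both $1$ and $n+1$; thus exactly one of three situations occurs: $I$ avoids both endpoints, $I$ contains $1$ but not $n+1$, or $I$ contains $n+1$ but not $1$. The flip automorphism $\phi$ of $A_{A_{n+1}}$ defined by $\sigma_i \mapsto \sigma_{n+2-i}$ preserves $\mathfrak P$ while exchanging the roles of the two endpoints; moreover $\phi \circ \lambda_n = \lambda_n \circ \psi$, where $\psi$ is the diagram symmetry $\widetilde\tau_i \mapsto \widetilde\tau_{n+2-i}$ of $A_{\widetilde C_n}$. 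Hence the case $n+1 \in I$ reduces to the case $1 \in I$, and it suffices to treat the latter two situations.

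For part (i), the inclusion $\lambda_n(\widetilde C_I) \subseteq A_I \cap \mathfrak P$ is immediate: every generator $\lambda_n(\widetilde\tau_i)$ with $i \in I$ is a power of $\sigma_i$, hence lies in $A_I$, and lies in $\mathfrak P$ because the image of $\lambda_n$ is $\mathfrak P$. For the reverse inclusion I distinguish the cases above. If $1, n+1 \notin I$, then $m_I \geq 2$ and $\max I \leq n$, so no braid in $A_I$ moves strand $1$ or strand $n+2$; thus $A_I \subseteq \mathfrak P$ and, since $\lambda_n(\widetilde\tau_i) = \sigma_i$ for every $i \in I$, we obtain $\lambda_n(\widetilde C_I) = \langle \sigma_i : i \in I\rangle = A_I = A_I \cap \mathfrak P$. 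If $1 \in I$ (and $n+1 \notin I$), then $m_I = 1$ and $\max I \leq n$, so the condition on strand $n+2$ is automatic and $A_I \cap \mathfrak P = A_I \cap \mathfrak P_1$. This is precisely the situation of Lemma \ref{L:Comput}(i): the intersection equals the subgroup generated by $\sigma_1^2$ together with the $\sigma_i$ for $i \in I, i \geq 2$, which is exactly $\lambda_n(\widetilde C_I)$.

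For part (ii), I would argue via the central element, using that $z_{A_I}$ generates $Z(A_I)$ and recording the permutation of $z_{A_I}$. When $k_I \geq 2$ the element $z_{A_I}$ is a full twist, hence a pure braid, so $z_{A_I} \in \mathfrak P$; when $k_I = 1$, say $I = \{j\}$, its permutation is the transposition $(j,\,j+1)$, which fixes both $1$ and $n+2$ precisely when $j \notin \{1, n+1\}$. Consequently $z_{A_I} \in \mathfrak P$ in every case except $I = \{1\}$ and $I = \{n+1\}$, where instead $z_{A_I}^2 \in \mathfrak P$. Since by part (i) the element $\lambda_n(z_{\widetilde C_I})$ generates the center of $\lambda_n(\widetilde C_I) = A_I \cap \mathfrak P$ (recall that $\widetilde C_I$ is irreducible of spherical type), it must be the generator of $\langle z_{A_I}\rangle \cap \mathfrak P$, namely $z_{A_I}$ away from the two exceptional singletons and $z_{A_I}^2$ at them. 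Equivalently, one may substitute the explicit three-case formula for $z_{\widetilde C_I}$ (the exact analogue of the formula for $z_{B_I}$ in Section \ref{S:B}) into $\lambda_n$ and read off the answer directly, exactly as in Lemma \ref{L:Comput}(ii).

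The routine inclusions and the endpoint-free case are immediate; the substantive point is the reverse inclusion in (i) in the presence of an endpoint, which I reduce to the braid-theoretic computation behind Lemma \ref{L:Comput}(i), the flip automorphism disposing of the right endpoint. For (ii), the only delicate step is to confirm that the center of the finite-index subgroup $A_I \cap \mathfrak P$ coincides with $\langle z_{A_I}\rangle \cap \mathfrak P$ rather than being strictly larger; this is guaranteed because $A_I \cap \mathfrak P = \lambda_n(\widetilde C_I)$ is itself an irreducible spherical Artin-Tits group with infinite-cyclic center, so comparing generators suffices, and the explicit-formula route sidesteps the issue entirely.
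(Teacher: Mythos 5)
Your proposal is correct, and in substance it coincides with the paper's (deliberately unwritten) proof: the paper states Lemma \ref{L:Comput2} as the analogue of Lemma \ref{L:Comput}, whose proof is ``an easy computation left to the reader'', and your endpoint-free case, your reduction of the case $1\in I$ to Chow's description of $A_I\cap\mathfrak P_1$, and your substitution of the explicit formula for $z_{\widetilde C_I}$ into $\lambda_n$ are exactly that computation, organized slightly more efficiently by the flip symmetry $\phi\circ\lambda_n=\lambda_n\circ\psi$ (which is a nice touch and is valid: $\phi$ swaps the two purity conditions defining $\mathfrak P$, so preserves $\mathfrak P$, and sends $A_I$, $z_{A_I}$ to $A_{I^\flat}$, $z_{A_{I^\flat}}$).

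One caution about your primary route for (ii), which you partly flag yourself but whose justification as written is not sufficient. Knowing that $u:=\lambda_n(z_{\widetilde C_I})$ generates the infinite cyclic group $Z(\lambda_n(\widetilde C_I))=Z(A_I\cap\mathfrak P)$ and that $z_{A_I}\in A_I\cap\mathfrak P$ is central in $A_I$ only yields $z_{A_I}=u^m$ for some $m\geqslant 1$; ``comparing generators'' does not force $m=1$, because $z_{A_I}$ genuinely admits proper roots lying in $\mathfrak P$. For instance, with $I=\{1,2\}$ the element $\sigma_2\sigma_1^2$ is $1$-pure (its permutation is the transposition exchanging the second and third punctures) and satisfies $(\sigma_2\sigma_1^2)^2=z_{A_I}$, so to exclude $m=2$ one must additionally check that such a root is not central in $A_I\cap\mathfrak P$ (here $\sigma_2\sigma_1^2$ does not commute with $\sigma_1^2$, as otherwise $\tau_1\tau_2=\tau_2\tau_1$ would hold in $A_{B_2}$). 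So the abstract center-comparison argument needs this extra step, or an exponent-sum count against the known generator of $Z(A_{B_{k_I}})$. Since your proposal explicitly includes the alternative of substituting the three-case formula for $z_{\widetilde C_I}$ into $\lambda_n$ --- which for $1\in I$ is literally the identity behind Lemma \ref{L:Comput}(ii), for interior $I$ is verbatim, and for $n+1\in I$ follows by the flip --- the proof you give is complete; just be aware that it is this computation, not the center comparison, that carries the argument.
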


As $A_{\widetilde C_n}$ is not of spherical type, we do not know a priori the analogues of Propositions \ref{P:Center} and~\ref{P:Adjacent}. However, these analogues hold, as shown in the next Propositions \ref{P:WellDefTildeC} and~\ref{P:AdjTildeC} --compare Propositions \ref{P:CenterATilde} and \ref{P:AdjATilde2}.

\begin{proposition}\label{P:WellDefTildeC}
Let $I,J$ be proper subintervals of $[n+1]$ and let $g\in A_{\widetilde C_n}$. The following three statements are equivalent. 
\begin{itemize}
\item[(i)] $\widetilde C_{I}^g=\widetilde C_J$,
\item[(ii)] $z_{\widetilde C_I}^g=z_{\widetilde C_J}$,
\item[(iii)] $\mathcal C_{I}^{\lambda_n(g)}=\mathcal C_{J}$.
\end{itemize}
\end{proposition}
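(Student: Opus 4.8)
The plan is to establish the cycle of implications (i) $\Rightarrow$ (ii) $\Rightarrow$ (iii) $\Rightarrow$ (i). The guiding principle is that, since $A_{\widetilde C_n}$ is \emph{not} of spherical type, Proposition \ref{P:Center} is unavailable inside $A_{\widetilde C_n}$; instead I transport every statement into the braid group $A_{A_{n+1}}$ through the monomorphism $\lambda_n$, where Proposition \ref{P:Center} and the curve--parabolic dictionary $\mathfrak f_{n+1}$ of Section \ref{S:Correspondence} are at hand. The implication (i) $\Rightarrow$ (ii) is immediate: conjugation by $g$ carries the infinite cyclic centre $\langle z_{\widetilde C_I}\rangle$ of $\widetilde C_I$ onto the centre $\langle z_{\widetilde C_J}\rangle$ of $\widetilde C_J$, so $z_{\widetilde C_I}^g=z_{\widetilde C_J}^{\pm1}$; comparing the exponent sums $\epsilon_{\widetilde C_n}$, which are positive for both central elements and invariant under conjugation, forces the sign $+$.

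For (ii) $\Rightarrow$ (iii), I would apply the injective map $\lambda_n$ to obtain $\lambda_n(z_{\widetilde C_I})^{\lambda_n(g)}=\lambda_n(z_{\widetilde C_J})$ in $A_{A_{n+1}}$ and then convert $\lambda_n(z_{\widetilde C_I})$ and $\lambda_n(z_{\widetilde C_J})$ into $z_{A_I}$ and $z_{A_J}$ via Lemma \ref{L:Comput2}(ii). Before doing so I must pin down the ``type'' of $I$ and $J$, and here the Coxeter graph of $\widetilde C_n$ carries \emph{two} even-labelled end edges, so there are two exceptional vertices $\{1\}$ and $\{n+1\}$ instead of one. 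By Lemma \ref{L:Corollary}, $\widetilde\tau_1$ is conjugate in $A_{\widetilde C_n}$ only to itself among the generators, and likewise $\widetilde\tau_{n+1}$, while any central element of a rank $\geqslant 2$ parabolic has $\epsilon_{\widetilde C_n}>1$; since (ii) makes $z_{\widetilde C_I}$ and $z_{\widetilde C_J}$ conjugate, these two invariants force $I=\{1\}\Leftrightarrow J=\{1\}$ and $I=\{n+1\}\Leftrightarrow J=\{n+1\}$. Once $I,J$ are of the same type, three cases remain: if neither is exceptional, Lemma \ref{L:Comput2}(ii) gives $z_{A_I}^{\lambda_n(g)}=z_{A_J}$ outright; if both equal $\{1\}$ (resp. $\{n+1\}$) I get $(\sigma_1^2)^{\lambda_n(g)}=\sigma_1^2$ (resp. $(\sigma_{n+1}^2)^{\lambda_n(g)}=\sigma_{n+1}^2$), and Zhu's theorem \cite[Theorem 2.2]{Zhu}, asserting that $\sigma_i$ and $\sigma_i^2$ share the same centraliser, upgrades this to $\sigma_1^{\lambda_n(g)}=\sigma_1$, that is $z_{A_I}^{\lambda_n(g)}=z_{A_J}$. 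In every case Proposition \ref{P:Center} in the spherical group $A_{A_{n+1}}$ yields $A_I^{\lambda_n(g)}=A_J$, which is exactly (iii) after applying $\mathfrak f_{n+1}$.

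The implication (iii) $\Rightarrow$ (i) is the crux, precisely because Proposition \ref{P:Center} cannot be invoked back in the non-spherical group $A_{\widetilde C_n}$. My plan is to sidestep it with the following observation. Rewriting (iii) as $A_I^{\lambda_n(g)}=A_J$ through $\mathfrak f_{n+1}$, and recalling from Lemma \ref{L:Comput2}(i) that $\lambda_n(\widetilde C_I)=A_I\cap\mathfrak P$ and $\lambda_n(\widetilde C_J)=A_J\cap\mathfrak P$, I use that $\lambda_n(g)$ lies in the \emph{subgroup} $\mathfrak P$, so conjugation by it preserves $\mathfrak P$, i.e. $\mathfrak P^{\lambda_n(g)}=\mathfrak P$. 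Since conjugation is a bijection commuting with intersection, $\lambda_n(\widetilde C_I)^{\lambda_n(g)}=(A_I\cap\mathfrak P)^{\lambda_n(g)}=A_I^{\lambda_n(g)}\cap\mathfrak P=A_J\cap\mathfrak P=\lambda_n(\widetilde C_J)$; that is, $\lambda_n(\widetilde C_I^{g})=\lambda_n(\widetilde C_J)$, and injectivity of $\lambda_n$ delivers (i).

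I expect the two genuine difficulties to be, first, the bookkeeping of the two exceptional vertices in (ii) $\Rightarrow$ (iii) --- in particular ruling out the mixed pairing $I=\{1\}$, $J=\{n+1\}$, which the combination of Lemma \ref{L:Corollary} and the exponent-sum invariant handles cleanly --- and second, the absence of a spherical-type centraliser statement for $A_{\widetilde C_n}$ itself, which the intersection-with-$\mathfrak P$ trick circumvents by descending through the injective map $\lambda_n$.
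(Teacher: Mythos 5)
Your proposal is correct and follows essentially the same route as the paper's own proof: the identical cycle (i)$\Rightarrow$(ii)$\Rightarrow$(iii)$\Rightarrow$(i), with Lemma \ref{L:Comput2}(ii) and \cite[Theorem 2.2]{Zhu} feeding Proposition \ref{P:Center} inside $A_{A_{n+1}}$ for (ii)$\Rightarrow$(iii), and the same intersection-with-$\mathfrak P$ computation $(A_I\cap\mathfrak P)^{\lambda_n(g)}=A_J\cap\mathfrak P$ for (iii)$\Rightarrow$(i). Your explicit exponent-sum and Lemma \ref{L:Corollary} bookkeeping of the exceptional singletons $\{1\}$ and $\{n+1\}$ merely spells out what the paper leaves implicit.
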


\begin{proof}
Note that using the isomorphism $\mathfrak f_{n+1}$ from Section \ref{S:Correspondence}, (iii) is equivalent to $A_I^{\lambda_n(g)}=A_J$. 
Assume (i). Considering the center, we get immediately (ii).

Let us show that (ii) implies (iii). Assume first that $I=\{1\}$. 
Then $z_{\widetilde C_J}=z_{\widetilde C_I}^g=\widetilde \tau_1^g$. By Lemma~\ref{L:Corollary}, this forces $J=\{1\}$, so that $\widetilde \tau_1^g=\widetilde \tau_1$.
%
%
%
%Recall that $\epsilon=\epsilon_{\widetilde C_n}$ is the exponent sum of words on $\{\widetilde \tau_1,\ldots,\widetilde \tau_{n+1}\}$. Note that $\epsilon(z_{\widetilde C_I}^g)=1$. Now, observe that for any subinterval $K$ of $[n+1]$, $\epsilon(z_{\widetilde C_K})=1$ if and only if $\#K=1$. From $z_{\widetilde C_I}^g=z_{\widetilde C_J}$ we deduce $\epsilon(z_{\widetilde C_J})=1$ and $\#J=1$. Therefore $z_{\widetilde C_J}=\widetilde \tau_i$ for some $i\in [n+1]$. We then have $\widetilde \tau_1^g=\widetilde \tau_i$. By Lemma \ref{L:Corollary}, $i=1$ and we deduce $\widetilde \tau_1^g=\widetilde \tau_1$. 
It follows that $(\sigma_1^2)^{\lambda_n(g)}=\sigma_1^2$ and from \cite[Theorem~2.2]{Zhu} we deduce that $\sigma_1^{\lambda_n(g)}=\sigma_1$. Finally, $A_{\{1\}}^{\lambda_n(g)}=A_{\{1\}}$ as desired. 
The proof is similar if we assume $I=\{n+1\}$. 
Suppose then that $I,J\neq \{1\},\{n+1\}$. By Lemma \ref{L:Comput2}(ii) we have $z_{A_{I}}^{\lambda_n(g)}=z_{A_{J}}$ and by Proposition \ref{P:Center}, we obtain $A_{I}^{\lambda_n(g)}=A_{J}$ as desired. 

Finally, assume (iii) and let us show (i). We have, as $\lambda_n(g)\in \mathfrak P$ and using Lemma \ref{L:Comput2}(i), 
$$\lambda_n(\widetilde C_I^g)=(\lambda_n(\widetilde C_I))^{\lambda_n(g)}=(A_{I}\cap \mathfrak P)^{\lambda_n(g)}=A_{I}^{\lambda_n(g)}\cap \mathfrak P^{\lambda_n(g)}=A_{J}\cap \mathfrak P=\lambda_n(\widetilde C_J)$$
and the injectivity of $\lambda_n$ ensures that $\widetilde C_I^g=\widetilde C_J$, as required. 
\end{proof}

%We can then define an injective map $\Lambda_n$ from the set of proper irreducible parabolic subgroups of $A_{\widetilde C_n}$ to the set of proper irreducible parabolic subgroups of $A_{A_{n+1}}$ given by the well-defined formula $\Lambda_n(\widetilde C_I^g)=A_{I'}^{\lambda_n(g)}$, for any proper subinterval $I$ of $\{0,\ldots,n\}$, $g\in A_{\widetilde C_n}$ and $I'=\{i+1,i\in I\}$. 

\begin{proposition}\label{P:AdjTildeC}
Let $I,J$ be proper subintervals of $[n+1]$ and let $g\in A_{\widetilde C_n}$. The following are equivalent. 
\begin{itemize}
\item[(i)] $\widetilde C_I^g$ and $\widetilde C_J$ are adjacent in $\mathcal C_{parab}(\widetilde C_n)$.
\item[(ii)] $z_{\widetilde C_I}^g$ and $z_{\widetilde C_J}$ commute.
\item[(iii)] $\mathcal C_{I}^{\lambda_n(g)}$ and $\mathcal C_{J}$ are adjacent in $\mathcal {CG}(\mathbb D_{n+2})$.
\end{itemize}
\end{proposition}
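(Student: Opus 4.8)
The plan is to follow the template of Propositions~\ref{P:AdjB} and~\ref{P:AdjATilde2}, transporting everything through the monomorphism $\lambda_n$ into the spherical-type group $A_{A_{n+1}}$, where Proposition~\ref{P:Adjacent} is available. The backbone of the argument is the identity $\lambda_n(\widetilde C_I^g)=A_I^{\lambda_n(g)}\cap\mathfrak P$, obtained from Lemma~\ref{L:Comput2}(i) by conjugating and using that $\lambda_n(g)\in\mathfrak P$ normalizes $\mathfrak P$ (and likewise $\lambda_n(\widetilde C_J)=A_J\cap\mathfrak P$). Since adjacency is a relation between distinct subgroups, I would first record that, by Proposition~\ref{P:WellDefTildeC}, the three ``equality'' situations $\widetilde C_I^g=\widetilde C_J$, $z_{\widetilde C_I}^g=z_{\widetilde C_J}$ and $\mathcal C_I^{\lambda_n(g)}=\mathcal C_J$ coincide, so that one may assume throughout that the objects involved in (i), (ii) and (iii) are distinct. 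I would then prove the cyclic chain (i)$\Rightarrow$(ii)$\Rightarrow$(iii)$\Rightarrow$(i).

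The implication (i)$\Rightarrow$(ii) is immediate from Definition~\ref{D:CParab}: if $\widetilde C_I^g$ and $\widetilde C_J$ are adjacent, then one is contained in the other or they commute elementwise, and in either case the central elements $z_{\widetilde C_I}^g=z_{\widetilde C_I^g}$ and $z_{\widetilde C_J}$ commute. For (ii)$\Rightarrow$(iii), I would apply the monomorphism $\lambda_n$, so that $\lambda_n(z_{\widetilde C_I})^{\lambda_n(g)}$ commutes with $\lambda_n(z_{\widetilde C_J})$, and then use Lemma~\ref{L:Comput2}(ii) to rewrite these images as $z_{A_I}$ and $z_{A_J}$, up to a square in the exceptional cases $I\in\{\{1\},\{n+1\}\}$ and $J\in\{\{1\},\{n+1\}\}$. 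The point is that by \cite[Theorem~2.2]{Zhu}, $\sigma_1$ and $\sigma_1^2$ (and likewise $\sigma_{n+1}$ and $\sigma_{n+1}^2$, and their conjugates) have the same centralizer in $A_{A_{n+1}}$; hence a square may be stripped on either side without affecting commutation, yielding that $z_{A_I}^{\lambda_n(g)}=z_{A_I^{\lambda_n(g)}}$ commutes with $z_{A_J}$. As $A_{A_{n+1}}$ is of spherical type and the two subgroups are distinct, Proposition~\ref{P:Adjacent} turns this into adjacency of $A_I^{\lambda_n(g)}$ and $A_J$ in $\mathcal C_{parab}(A_{n+1})$, which via the isomorphism $\mathfrak f_{n+1}$ is exactly the adjacency of $\mathcal C_I^{\lambda_n(g)}$ and $\mathcal C_J$ claimed in (iii).

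For (iii)$\Rightarrow$(i), I would read (iii) back through $\mathfrak f_{n+1}$ as adjacency of the distinct parabolic subgroups $A_I^{\lambda_n(g)}$ and $A_J$ in $A_{A_{n+1}}$, so that by Definition~\ref{D:CParab} either one is contained in the other or they have trivial intersection and commute elementwise. Each of these two conditions is clearly inherited by the subgroups $A_I^{\lambda_n(g)}\cap\mathfrak P=\lambda_n(\widetilde C_I^g)$ and $A_J\cap\mathfrak P=\lambda_n(\widetilde C_J)$; moreover these remain distinct, for otherwise $\widetilde C_I^g=\widetilde C_J$, contradicting distinctness via Proposition~\ref{P:WellDefTildeC}. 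Thus $\lambda_n(\widetilde C_I^g)$ and $\lambda_n(\widetilde C_J)$ satisfy the adjacency conditions of Definition~\ref{D:CParab}, and since $\lambda_n$ is an isomorphism onto $\mathfrak P$ preserving inclusion, intersection and commutation, the same holds for $\widetilde C_I^g$ and $\widetilde C_J$, which is (i).

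The main obstacle is the descent step in (iii)$\Rightarrow$(i): one must be sure that intersecting the ambient parabolic subgroups of $A_{A_{n+1}}$ with $\mathfrak P$ neither destroys the nesting and commutation relations nor collapses the two subgroups into one. The first is formal, while the non-collapse is precisely controlled by Proposition~\ref{P:WellDefTildeC}. A secondary, purely book-keeping difficulty is the handling of the two exceptional generators in (ii)$\Rightarrow$(iii), where Lemma~\ref{L:Comput2}(ii) produces squares; this is resolved uniformly by the centralizer coincidence of \cite[Theorem~2.2]{Zhu}. Everything else is formal transport through the embedding $\lambda_n$.
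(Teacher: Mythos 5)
Your proposal is correct and takes essentially the same route as the paper's own proof: the same cyclic chain (i)$\Rightarrow$(ii)$\Rightarrow$(iii)$\Rightarrow$(i), with Lemma~\ref{L:Comput2}(ii) plus \cite[Theorem 2.2]{Zhu} handling the exceptional squares in (ii)$\Rightarrow$(iii), and the identity $\lambda_n(\widetilde C_I^g)=(A_I\cap\mathfrak P)^{\lambda_n(g)}=A_I^{\lambda_n(g)}\cap\mathfrak P$ driving the descent through $\mathfrak P$ in (iii)$\Rightarrow$(i). Your explicit verification of distinctness via Proposition~\ref{P:WellDefTildeC} is a detail the paper leaves implicit; otherwise the arguments coincide.
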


\begin{proof}
(i) $\Longrightarrow$ (ii) is proven in the same way as the direct implication of Proposition \ref{P:AdjATilde2}. 

Suppose (ii). By Lemma \ref{L:Comput2}(ii) (and \cite[Theorem 2.2]{Zhu} for the case when $I$ or $J=\{1\}$ or $\{n+1\}$), we obtain that $z_{A_{I}}^{\lambda_n(g)}$ and $z_{A_{J}}$ commute, which is to say, according to Proposition \ref{P:Adjacent}, that $A_{I}^{\lambda_n(g)}$ and $A_{J}$ are adjacent in $\mathcal C_{parab}(A_{n+1})$. Applying the isomorphism $\mathfrak f_{n+1}^{-1}$, we obtain (iii).

Suppose (iii) and let us show (i). Using the isomorphism $\mathfrak f_{n+1}$, $A_I^{\lambda_n(g)}$ and $A_J$ are adjacent. Suppose first that $A_{I}^{\lambda_n(g)}\subset A_{J}$. Then we have, as $\lambda_n(g)\in \mathfrak P$ and using Lemma \ref{L:Comput2}(i), 
$$\lambda_n(\widetilde C_I^g)=\lambda_n(\widetilde C_I)^{\lambda_n(g)}=(A_{I}\cap \mathfrak P)^{\lambda_n(g)}=A_{I}^{\lambda_n(g)}\cap \mathfrak P\subset A_{J}\cap \mathfrak P=\lambda_n(\widetilde C_J).$$
It follows that $\lambda_n(\widetilde C_{I}^g)\subset \lambda_n(\widetilde C_{J})$ and injectivity of $\lambda_n$ shows that $\widetilde C_I^g\subset \widetilde C_J$. The proof when ${A_{J}\subset A_{I}^{\lambda_n(g)}}$ is similar. 
Assume finally that $A_{I}^{\lambda_n(g)}\cap A_{J}=\{1\}$ and that both subgroups commute. Then 
we have, as $\lambda_n(g)\in \mathfrak P$ and using Lemma \ref{L:Comput2}(i) and the injectivity of $\lambda_n$,
$$\lambda_n(\widetilde C_I^g\cap \widetilde C_J)=\lambda_n(\widetilde C_I^g)\cap \lambda_n(\widetilde C_J)=(A_{I}^{\lambda_n(g)}\cap \mathfrak P)\cap (A_{J}\cap \mathfrak P)=A_{I}^{\lambda_n(g)}\cap A_{J}\cap \mathfrak P=\{1\}$$ and $\widetilde C_I^g\cap \widetilde C_J=\{1\}$. As $\lambda_n(\widetilde C_{I}^g)$ is a subgroup of $A_{I}^{\lambda_n(g)}$, $\lambda_n(\widetilde C_J)$ is a subgroup of $A_{J}$ and $A_{I}^{\lambda_n(g)}$ and $A_{J}$ commute mutually, we obtain that $\lambda_n(\widetilde C_I^g)$ and $\lambda_n(\widetilde C_J)$ commute mutually and again by injectivity of $\lambda_n$, $\widetilde C_{I}^g$ and $\widetilde C_J$ commute. Therefore we have shown that $\widetilde C_I^g$ and $\widetilde C_J$ are adjacent.
\end{proof}

By contrast with the embedding $\eta_n$ of $A_{B_n}$ in $A_{A_n}$ (Section \ref{S:B}), not all curves in $\mathbb D_{n+2}$ can be obtained as $\mathcal C_I^{\lambda_n(g)}$, for a proper subinterval $I$ of $[n+1]$ and $g\in A_{\widetilde C_n}$. 

\begin{proposition}\label{P:CurveTildeC}
Let $\mathcal C$ be a curve in $\mathbb D_{n+2}$; the following are equivalent:
\begin{itemize}
\item[(i)] There exist a proper subinterval $I$ of $[n+1]$ and $g\in A_{\widetilde C_n}$ such that $\mathcal C=\mathcal C_I^{\lambda_n(g)}$,
\item[(ii)]  $\mathcal C$ does not surround both the first and the last punctures.
\end{itemize}
\end{proposition}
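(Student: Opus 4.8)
The plan is to prove the two implications separately, by relating the punctures surrounded by a curve to the permutation induced by the braid that produces it. Throughout I use the elementary principle that for a braid $y\in A_{A_{n+1}}$ and a curve $\mathcal D$, the set of punctures surrounded by $\mathcal D^y$ is the image, under the induced permutation $\pi_y$, of the set of punctures surrounded by $\mathcal D$ (a mapping class carries inner punctures to inner punctures). Recall also that the image of $\lambda_n$ is $\mathfrak P$, the group of $(1,(n+2))$-pure braids, namely those $y$ with $\pi_y(1)=1$ and $\pi_y(n+2)=n+2$.

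For (i)$\Longrightarrow$(ii), write $\mathcal C=\mathcal C_I^{\lambda_n(g)}$ and set $\pi=\pi_{\lambda_n(g)}$, which fixes both $1$ and $n+2$. The punctures surrounded by $\mathcal C$ are then the image under $\pi$ of the consecutive block $\{m_I,\ldots,m_I+k_I\}$ surrounded by the round curve $\mathcal C_I$. Since $\pi(1)=1$ and $\pi(n+2)=n+2$, the curve $\mathcal C$ surrounds puncture $1$ (resp.\ puncture $n+2$) if and only if $\mathcal C_I$ does. But a round curve $\mathcal C_I$ attached to a proper subinterval $I$ of $[n+1]$ surrounds $k_I+1\leqslant n+1$ consecutive punctures, so it cannot surround both $1$ and $n+2$ --that would force $I=[n+1]$, which is not proper. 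Hence $\mathcal C$ does not surround both the first and the last punctures.

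For (ii)$\Longrightarrow$(i) it suffices to produce a braid $\mu\in\mathfrak P$ with $\mathcal C^{\mu}$ a round curve attached to a proper subinterval of $[n+1]$; then $\mathcal C=(\mathcal C^{\mu})^{\mu^{-1}}$ and $g=\lambda_n^{-1}(\mu^{-1})$ settle (i). As in the proof of Proposition \ref{P:Surj}, I would first choose any braid $\zeta$ with $\mathcal C^{\zeta}=\mathcal C_J$ round, and put $p=\pi_{\zeta}(1)$ and $q=\pi_{\zeta}(n+2)$ (distinct, as $\pi_\zeta$ is a bijection). By the principle above, puncture $p$ lies inside $\mathcal C_J$ if and only if puncture $1$ lies inside $\mathcal C$, and puncture $q$ lies inside $\mathcal C_J$ if and only if puncture $n+2$ lies inside $\mathcal C$; hypothesis (ii) therefore guarantees that $p$ and $q$ are \emph{not} both inside the block surrounded by $\mathcal C_J$. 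It then remains to build $\delta$ so that $\pi_\delta(p)=1$ and $\pi_\delta(q)=n+2$ (that is, $\zeta\delta\in\mathfrak P$) while $\mathcal C_J^{\delta}$ stays round, and to take $\mu=\zeta\delta$. Bringing $p$ to position $1$ is done with the analogue of $a_{p-1}$ in $A_{A_{n+1}}$: if $p$ lies outside the block this already keeps the curve round (Lemma \ref{L:ActionOfai}(i)-(ii)), and if $p$ lies inside it one postcomposes with $\xi_J$, which restandardizes the curve and, by Remark \ref{R:Pure}, leaves the first strand straight so that $p$ still lands on $1$ (Lemma \ref{L:ActionOfai}(iii)). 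Symmetrically, bringing the image of $q$ to position $n+2$ uses the analogue of $b_{\bullet}$, together with $\xi'_J$ when needed, and by Remark \ref{R:Pure} this restandardization leaves the last strand straight, hence does not disturb the already-fixed first strand.

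The main obstacle is precisely the coordination of these two strand-adjustments, and this is where hypothesis (ii) is genuinely used. Because (ii) forbids $p$ and $q$ from both lying inside the block, at most one of the restandardizations (by $\xi_J$ or by $\xi'_J$) is ever required; treating the unique ``inside'' strand first sends the curve to an extreme round block (of the form $\mathcal C_{[1,k]}$ or $\mathcal C_{[n-k+2,n+1]}$), after which a short positional check shows that the other strand automatically lies outside the current round block and can be moved by a plain $a_{\bullet}$ or $b_{\bullet}$. Were both $p$ and $q$ inside the block, moving them to the two extreme positions would force the resulting round curve to surround all $n+2$ punctures, which is impossible for a proper curve --this is exactly the failure detected by (i)$\Longrightarrow$(ii). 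Finally, once $\zeta\delta\in\mathfrak P$ and $\mathcal C^{\zeta\delta}$ is round, the implication (i)$\Longrightarrow$(ii) already shows that this round curve does not surround both the first and the last puncture, so it is attached to a proper subinterval of $[n+1]$, completing the argument.
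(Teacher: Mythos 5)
Your proof is correct and follows essentially the same route as the paper: to prove (ii)$\Rightarrow$(i), the paper likewise standardizes the curve and then adjusts the strands starting at positions $1$ and $n+2$ using the braids $a_{\bullet}$, $b_{\bullet}$ and the restandardizers $\xi_J$, $\xi'_J$, whose purity properties (Remark \ref{R:Pure}) make the two adjustments compatible, with hypothesis (ii) ruling out a double conflict exactly as you observe. The only organizational difference is that the paper first reduces by symmetry to the case where $\mathcal C$ does not surround the first puncture and invokes Proposition \ref{P:Surj} as a black box --so that only the last strand needs fixing, the braid $\xi'_I$ being automatically $1$-pure since $m\geqslant 2$-- whereas you carry out both strand adjustments explicitly, with a case analysis on which of $p$, $q$ lies inside the round block.
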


\begin{proof}
(i) $\Longrightarrow$ (ii) Suppose that $\mathcal C=C_{I}^{\lambda_n(g)}$ for some proper subinterval~$I$ of~$[n+1]$ and some $g\in A_{\widetilde C_n}$. The curve~$\mathcal C_{I}$ cannot surround both the first and the last punctures as it is standard; assume for instance that it does not surround the first puncture (the other case is similar). Then as $\lambda_n(g) \in \mathfrak P$, we see that $\mathcal C_{I}^{\lambda_n(g)}$ does not surround the first puncture either. 

(ii) $\Longrightarrow$ (i). Suppose that $\mathcal C$ does not surround both the first and the last punctures. 
We must show that $\mathcal C$ can be transformed into a standard curve by the action of some braid $\beta$ in $\mathfrak P$. 
Suppose for instance that $\mathcal C$ does not surround the first puncture (the other case is similar). By Proposition~\ref{P:Surj}, we know that there exist a \emph{1-pure} braid $\alpha$ and a proper subinterval $I$ of $[n+1]$ such that $\mathcal C^{\alpha}=\mathcal C_I$ is a standard curve surrounding punctures $m, \ldots, m+k$, for some $m\geqslant 2$, $k\geqslant 1$. 

Let $\pi_{\alpha}\in \mathfrak S_{n+2}$ be the permutation associated to $\alpha$. 
Let $j_0=\pi_{\alpha}(n+2)\in \{2,\ldots,n+2\}$. Recall the braid $b_{j_0}$ from Section \ref{S:Correspondence};
note that $\alpha b_{j_0}\in \mathfrak P$. Use Lemma~\ref{L:ActionOfbi}. 
If $j_0<m$ or $j_0>m+k$, then $\mathcal C_I^{b_{j_0}}$ is standard 
so $\beta=\alpha b_{j_0}$ does the job. 
Otherwise $m\leqslant j_0\leqslant m+k$. If $\mathcal C_I^{b_{j_0}}=\mathcal C_{I}^{b_{m+k}}$ is not standard, we have $m+k<n+2$ and $\mathcal C_{I}^{b_{j_0}}\xi'_I=\mathcal C_{[n+2-k,n+1]}$. By Remark \ref{R:Pure}, $\xi'_I$ is $(n+2)$-pure; it is also 1-pure because $m\geqslant 2$. Therefore we can take $\beta=\alpha b_{j_0} \xi'_I$. 
% \begin{figure}
% \center
% \includegraphics{Tau1bis.pdf}
% \caption{(i) Given a proper subinterval $I$ of $[n+1]$ with $m=\min I$ and $k=\#I$,
% and given $m\leqslant j_0\leqslant m+k$, the action of $b_{j_0}$
%  on $\mathcal C_I$ is the same of that of $b_{m+k}$ and results in a non-standard curve $\mathcal C''_I$ whenever $m+k<n+2$. (ii) The braid $\xi'_I$. }
%\label{F:Tau1bis}
%\end{figure}
% 
%Finally, if $m\leqslant j_0\leqslant m+k$, that is the puncture numbered $j_0$ is enclosed by $\mathcal C_I$, then %$\mathcal C^{\alpha(\sigma_{j_0}\ldots \sigma_{n+1})}=
%$\mathcal C_I^{b_{j_0}}=\mathcal C_{I}^{b_{m+k}}$ is not standard as soon as $m+k<n+2$ (see Figure \ref{F:Tau1bis}). However, setting 
%$$\xi'_I=\begin{cases}
%(\sigma_{m+k-1}\ldots \sigma_{m})\ldots (\sigma_n\ldots \sigma_{n-k+1}) & \text{if $m+k<n+2$},\\
%Id & \text{otherwise,}
%\end{cases}$$ 
%we obtain that $\xi'_I\in\mathfrak P$ (since $m\geqslant 2$) and that 
%$\mathcal C^{\alpha b_{j_0} \xi'_I}=\mathcal C_{[n-k+2,n+1]}$ 
%is standard so we can choose ${\beta=\alpha b_{j_0}\xi'_I\in \mathfrak P}$.  
\end{proof}

\begin{definition}
Let $\mathcal K_C$ be the subgraph of $\mathcal{CG}(\mathbb D_{n+2}$ induced by the curves which do not surround both the first and the last punctures of $\mathbb D_{n+2}$. 
\end{definition}

\begin{corollary}\label{C:MapLambda}
The assignment $\widetilde C_I^g\mapsto \mathcal C_{I}^{\lambda_n(g)}$, where $I$ is 
a proper subinterval of $[n+1]$ and $g\in A_{\widetilde C_n}$, 
 defines a graph isomomorphism $\Lambda_n$ 
from $\mathcal C_{parab}(\widetilde C_n)$ to $\mathcal K_C$. In particular, we have $d_{\mathbb D_{n+2}}(\Lambda_n(P),\Lambda_n(P'))\leqslant d_{\widetilde C_n}(P,P')$ for all proper irreducible parabolic subgroups $P$ and $P'$ of~$A_{\widetilde C_n}$. 
\end{corollary}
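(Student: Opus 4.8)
The plan is to follow verbatim the scheme already used for Corollary \ref{C:MapTheta} (and, one step earlier, Corollary \ref{C:MapH}): all three required features of $\Lambda_n$ — well-definedness with injectivity, surjectivity onto $\mathcal K_C$, and the bi-directional homomorphism property — have been packaged into the three preceding propositions, so the argument is essentially a matter of assembling them correctly. Concretely, well-definedness and injectivity will come from Proposition \ref{P:WellDefTildeC}, surjectivity onto the vertex set of $\mathcal K_C$ from Proposition \ref{P:CurveTildeC}, and the fact that both $\Lambda_n$ and $\Lambda_n^{-1}$ preserve adjacency from Proposition \ref{P:AdjTildeC}. The final distance inequality will then be a formal consequence of $\mathcal K_C$ being an \emph{induced} subgraph of $\mathcal{CG}(\mathbb D_{n+2})$.

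First I would record that every proper irreducible parabolic subgroup $P$ of $A_{\widetilde C_n}$ can be written as $\widetilde C_I^{\,g}$ for some proper subinterval $I$ of $[n+1]$ and some $g\in A_{\widetilde C_n}$, since $P$ is by definition conjugate to a standard one and the proper irreducible standard parabolic subgroups are precisely the $\widetilde C_I$. For well-definedness I would take two representations $\widetilde C_I^{\,g}=\widetilde C_J^{\,h}$ of the same vertex; then $\widetilde C_I^{\,gh^{-1}}=\widetilde C_J$, and the equivalence (i)$\Leftrightarrow$(iii) of Proposition \ref{P:WellDefTildeC} together with the fact that $\lambda_n$ is a homomorphism gives $\mathcal C_I^{\lambda_n(g)}=\mathcal C_J^{\lambda_n(h)}$, so the value $\mathcal C_I^{\lambda_n(g)}$ does not depend on the chosen representation. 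Reading the same equivalence in the other direction yields injectivity. Surjectivity onto $\mathcal K_C$ is immediate: the image of $\Lambda_n$ is exactly the set of curves of the form $\mathcal C_I^{\lambda_n(g)}$, which by Proposition \ref{P:CurveTildeC} is exactly the set of curves not surrounding both the first and the last punctures, i.e. the vertex set of $\mathcal K_C$.

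For the adjacency (graph isomorphism) step I would reduce to the situation covered by Proposition \ref{P:AdjTildeC}, where one of the two subgroups is standard, by exploiting that adjacency is invariant under the two compatible group actions: conjugating $\widetilde C_I^{\,g}$ and $\widetilde C_J^{\,h}$ by $h^{-1}$ does not change their adjacency in $\mathcal C_{parab}(\widetilde C_n)$, and applying the braid $\lambda_n(h)$ does not change adjacency in $\mathcal{CG}(\mathbb D_{n+2})$. Thus $\widetilde C_I^{\,g}$ and $\widetilde C_J^{\,h}$ are adjacent if and only if $\widetilde C_I^{\,gh^{-1}}$ and $\widetilde C_J$ are, if and only if (by Proposition \ref{P:AdjTildeC}) $\mathcal C_I^{\lambda_n(gh^{-1})}$ and $\mathcal C_J$ are adjacent, if and only if $\mathcal C_I^{\lambda_n(g)}$ and $\mathcal C_J^{\lambda_n(h)}$ are. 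Since $\mathcal K_C$ carries the induced subgraph structure, this shows $\Lambda_n$ and its inverse are both graph homomorphisms, hence $\Lambda_n$ is an isomorphism onto $\mathcal K_C$.

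Finally, the inequality $d_{\mathbb D_{n+2}}(\Lambda_n(P),\Lambda_n(P'))\leqslant d_{\widetilde C_n}(P,P')$ will follow because $\Lambda_n$ being a graph isomorphism onto $\mathcal K_C$ gives equality between $d_{\widetilde C_n}$ and the intrinsic metric of $\mathcal K_C$, while passing from the induced subgraph $\mathcal K_C$ to the ambient graph $\mathcal{CG}(\mathbb D_{n+2})$ can only shorten distances. I do not expect a genuine obstacle here, as the content is already carried by Propositions \ref{P:WellDefTildeC}, \ref{P:AdjTildeC} and \ref{P:CurveTildeC}; the only point requiring a little care is the bookkeeping in the adjacency step, namely invoking the invariance of adjacency under the $A_{\widetilde C_n}$-action and the braid action to reduce to the case where the second subgroup is standard, which is exactly the case treated by Proposition \ref{P:AdjTildeC}.
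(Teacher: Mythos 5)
Your proposal is correct and follows essentially the same route as the paper, which likewise cites Proposition \ref{P:WellDefTildeC} for well-definedness and injectivity, Proposition \ref{P:CurveTildeC} for surjectivity onto $\mathcal K_C$, and Proposition \ref{P:AdjTildeC} for the homomorphism property in both directions. Your extra bookkeeping (reducing a general pair $\widetilde C_I^{\,g}$, $\widetilde C_J^{\,h}$ to the standard-versus-conjugate case via the simplicial actions, and deducing the distance inequality from $\mathcal K_C$ being an induced subgraph) is exactly what the paper leaves implicit.
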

\begin{proof}
By Proposition \ref{P:WellDefTildeC}, the assignment $\Lambda_n$ is a well-defined injective map. By Proposition \ref{P:CurveTildeC}, the image of this map is $\mathcal K_C$. By Proposition \ref{P:AdjTildeC}, both $\Lambda_n$ and its inverse are graph
homomorphisms. 
\end{proof}

\begin{proposition}\label{P:DenseTildeC}
The subgraph $\mathcal K_C$ is 1-dense in $\mathcal{CG}(\mathbb D_{n+2})$; the graph $\mathcal C_{parab}(\widetilde C_n)$ has infinite diameter. 
\end{proposition}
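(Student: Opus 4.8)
The plan is to establish the first (geometric) assertion --that $\mathcal K_C$ is $1$-dense in $\mathcal{CG}(\mathbb D_{n+2})$-- and then to deduce the infinite diameter exactly as in Proposition~\ref{P:DenseTildeA}. Indeed, once $1$-density is known, I would fix a vertex $P$ of $\mathcal C_{parab}(\widetilde C_n)$ and $M>0$, use the infinite diameter of the curve graph (\cite[Proposition~4.6]{MasurMinsky1}) to find a curve $\mathcal C$ with $d_{\mathbb D_{n+2}}(\Lambda_n(P),\mathcal C)>M+1$, replace it by a curve $\mathcal C'\in\mathcal K_C$ at distance at most $1$, and let $P'$ be the parabolic subgroup with $\Lambda_n(P')=\mathcal C'$. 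The triangle inequality gives $d_{\mathbb D_{n+2}}(\Lambda_n(P),\Lambda_n(P'))>M$, and the inequality in Corollary~\ref{C:MapLambda} then yields $d_{\widetilde C_n}(P,P')>M$. So the whole content lies in the $1$-density statement.

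For $1$-density, I would take an arbitrary curve $\mathcal C$ in $\mathbb D_{n+2}$; if $\mathcal C$ already lies in $\mathcal K_C$ there is nothing to prove, so I may assume that $\mathcal C$ surrounds both the first puncture $p_1$ and the last puncture $p_{n+2}$, and I must exhibit a curve $\mathcal C'\in\mathcal K_C$ disjoint from $\mathcal C$. Here is where the argument departs from the proof of Proposition~\ref{P:DenseTildeA}: the standardization trick (applying a pure braid to reduce to a round curve, as afforded by Proposition~\ref{P:Surj}) is unavailable, because \emph{no} essential round curve surrounds both $p_1$ and $p_{n+2}$ --such a curve would have to enclose all $n+2$ punctures and hence be inessential. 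I therefore plan to argue directly and topologically, on the subsurface cut out by $\mathcal C$.

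The construction of $\mathcal C'$ would proceed by a short case analysis according to the number of punctures in the exterior of $\mathcal C$. Since $\mathcal C$ is essential it encloses at most $n+1$ punctures, so its exterior contains at least one puncture and its interior contains at least two (among them $p_1$ and $p_{n+2}$). If the exterior contains at least two punctures, I would take $\mathcal C'$ to be a small curve enclosing two of them: it is essential, disjoint from $\mathcal C$, and surrounds neither $p_1$ nor $p_{n+2}$, hence lies in $\mathcal K_C$. If instead the exterior contains exactly one puncture, then the interior contains $n+1\geqslant 3$ punctures (this is where $n\geqslant 2$ is used), so I can pick a pair of interior punctures distinct from $\{p_1,p_{n+2}\}$ and let $\mathcal C'$ be a curve enclosing them inside $\mathcal C$: it is essential, nested in $\mathcal C$ and therefore disjoint from it, and it does not surround both extreme punctures, so again $\mathcal C'\in\mathcal K_C$.

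The main obstacle I anticipate is precisely the breakdown of the standardization strategy noted above, which forces the direct topological reasoning; the only point requiring genuine care is checking that the region in which $\mathcal C'$ is to be drawn always contains enough punctures to host an \emph{essential} curve avoiding both $p_1$ and $p_{n+2}$, which is guaranteed by the hypothesis $n\geqslant 2$ together with the essentiality of $\mathcal C$.
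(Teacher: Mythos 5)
Your proposal is correct and follows essentially the same route as the paper: the paper's proof also argues directly (no standardization), taking any curve in the exterior when the exterior contains at least two punctures, and otherwise any curve in the interior not surrounding both extreme punctures, with the infinite-diameter part deduced exactly as in Proposition~\ref{P:DenseTildeA}. One small caution: in your second case, read ``a pair distinct from $\{p_1,p_{n+2}\}$'' as a pair not equal to that set (it may contain one of $p_1,p_{n+2}$), since for $n=2$ the interior contains only one non-extreme puncture -- your own closing clause (``does not surround both extreme punctures'') is the right criterion.
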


\begin{proof}
This amounts to show that given a curve $\mathcal C$ surrounding both the first and the last punctures, it is possible to find another curve $c$ disjoint from $\mathcal C$ and such that $c$ does not surround both the first and the last punctures. If the exterior of $\mathcal C$ contains at least 2 punctures, we take any curve $c$ in the exterior of $\mathcal C$. Otherwise, the interior of $\mathcal C$ contains $n+1\geqslant 3$ punctures and 
we can choose in the interior of $\mathcal C$ any curve which does not surround both the first and the last punctures. For the second part we can argue following the same lines as in the proof of the second part of Proposition~\ref{P:DenseTildeA}. 
\end{proof}

To conclude our study we will now show that $\mathcal K_C$ is hyperbolic. In the next two lemmas, we show that $\mathcal K_C$ satisfies the hypothesis of Theorem \ref{T:Vokes}, from which we conclude that $\mathcal K_C$, and hence $\mathcal C_{parab}(\widetilde C_n)$, is hyperbolic. 

\begin{lemma}\label{L:TwistFreeC}
(i) $\mathcal K_C$ is connected. (ii) The natural action of the pure braid group $PA_{A_{n+1}}$ on~$\mathbb D_{n+2}$ induces an action by isometries on $\mathcal K_C$. (iii) No annulus in $\mathbb D_{n+2}$ can be a witness for all vertice of $\mathcal K_C$. 
\end{lemma}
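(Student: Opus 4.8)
The plan is to verify the three clauses of the theorem in direct analogy with Lemma \ref{L:TwistFree}, exploiting the close structural parallel between $\mathcal K_A$ and $\mathcal K_C$. For part (i), I would simply invoke the connectedness of $\mathcal C_{parab}(\widetilde C_n)$, which was already established in the introduction (part (i) of Theorem \ref{T:TildeA}), together with the graph isomorphism $\Lambda_n$ from Corollary \ref{C:MapLambda}; since $\mathcal K_C \cong \mathcal C_{parab}(\widetilde C_n)$, connectedness transfers immediately. For part (ii), I would note that the natural action of $A_{A_{n+1}}$ on $\mathbb D_{n+2}$ restricts to an action of the pure braid group $PA_{A_{n+1}}$, and that every pure braid fixes all punctures setwise --- in particular the first and last punctures --- so it preserves the property of \emph{not} surrounding both of them. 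Hence pure braids permute the vertices of $\mathcal K_C$ and, preserving disjointness, act simplicially and therefore by isometries on $\mathcal K_C$.

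The substance of the lemma is part (iii), where I must show that no annulus can witness every vertex of $\mathcal K_C$. The strategy is to produce, for an \emph{arbitrary} essential curve $\mathcal C$ in $\mathbb D_{n+2}$, a vertex $c$ of $\mathcal K_C$ disjoint from the annular neighborhood of $\mathcal C$; this shows the annulus determined by $\mathcal C$ fails to witness $c$, and since every annulus is (the neighborhood of) some such $\mathcal C$, no annulus is a witness. I would split into cases according to whether $\mathcal C$ itself belongs to $\mathcal K_C$. If $\mathcal C$ does not surround both the first and the last punctures, then $\mathcal C$ is itself a vertex of $\mathcal K_C$, and $\mathcal C$ can be isotoped off the annulus it determines, so $c=\mathcal C$ works. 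If $\mathcal C$ does surround both the first and the last punctures, I would distinguish by the number of punctures in the exterior: if the exterior contains at least two punctures, take $c$ to be any curve lying in the exterior of $\mathcal C$ (which then fails to surround the punctures interior to $\mathcal C$, in particular it need not surround both extremal ones, and is disjoint from $\mathcal C$); otherwise the interior of $\mathcal C$ contains $n+1 \geqslant 3$ punctures, and I would choose $c$ to be a curve surrounded by $\mathcal C$ that encloses, say, the first puncture together with some interior puncture but not the last --- such a curve exists precisely because there are at least three interior punctures --- so $c$ does not surround both extremal punctures and is disjoint from $\mathcal C$.

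I do not anticipate a genuine obstacle here; the argument mirrors that of Lemma \ref{L:TwistFree} almost verbatim, the only point requiring care being the symmetric role now played by the \emph{two} distinguished punctures (first and last) rather than the single one in the $\widetilde A_n$ case. The bookkeeping in part (iii) must ensure that the witnessing curve $c$ genuinely avoids surrounding \emph{both} extremal punctures simultaneously; the hypothesis $n \geqslant 2$ (so that $n+1 \geqslant 3$) is exactly what guarantees enough room to place such a curve in the interior case.
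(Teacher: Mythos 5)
Your proposal is correct and takes essentially the same approach as the paper, whose entire proof of this lemma is the remark that it is identical, \emph{mutatis mutandis}, to that of Lemma \ref{L:TwistFree} --- precisely the adaptation you carry out, including the valid choice in the interior case of a curve enclosing the first but not the last puncture (which exists since $n+1\geqslant 3$). One small wording point: in (ii) a pure braid fixes each puncture \emph{pointwise} (its associated permutation is trivial), not merely the set of punctures setwise, and it is this pointwise fixing of the first and last punctures that your argument actually uses.
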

\begin{proof}
The proof is identical to the proof of Lemma \ref{L:TwistFree}. 
\end{proof}

\begin{remark}\rm\label{R:NotQI}
The same argument as in the proof of Proposition \ref{P:NotQI} shows that the embedding of $\mathcal K_C$ in $\mathcal{CG}(\mathbb D_{n+2})$ is not quasi-isometric.
\end{remark}

For the following lemma, we denote by $p_1$ the first puncture and by $p_{n+2}$ the last puncture.

\begin{lemma}\label{L:WitnessC}
Let $X$ be a subsurface of $\mathbb D_{n+2}$. Then $X$ is a witness for $\mathcal K_C$ if and only if one of the following holds. 

\begin{itemize}
\item[(i)] $X=\mathbb D_{n+2}$ or $X=\mathbb D_{n+2}\setminus D$, where $D$ is the interior of an essential curve surrounding $p_1$ and $p_{n+2}$ and no other puncture.  
\item[(ii)] $X$ is the interior of an essential curve surrounding $p_1,p_{n+2}$ and exactly $(n-1)$ other punctures. 
\item[(iii)] $X=X'\setminus D$, where $X'$ is the interior of an essential curve surrounding $p_1,p_{n+2}$ and exactly $(n-1)$ other punctures and $D$ is the interior of an essential curve surrounding $p_1$ and $p_{n+2}$ and no other puncture.  
\end{itemize}
We will say that $X$ is a witness of type (i), (ii) or (iii). Two witnesses for $\mathcal K_C$ are never disjoint. 
\end{lemma}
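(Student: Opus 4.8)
The plan is to mirror the strategy of Lemma \ref{L:Witness}, replacing the single distinguished puncture $p_1$ by the pair $\{p_1, p_{n+2}\}$. First I would verify that the three listed subsurfaces are genuinely witnesses for $\mathcal K_C$: a curve fails to be witnessed by $X$ only if it can be isotoped off $X$, and inspecting types (i)--(iii) shows that any curve disjoint from such an $X$ must surround \emph{both} $p_1$ and $p_{n+2}$, hence lies outside $\mathcal K_C$. This is the easy direction and proceeds by direct inspection of the pictures (analogous to Figure \ref{F:Witness}).

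For the converse, let $X$ be an arbitrary witness for $\mathcal K_C$ and split into the same two cases as before according to whether $\partial \mathbb D_{n+2}$ is a boundary component of $X$. In the \emph{first case} ($\partial\mathbb D_{n+2}\subset\partial X$), if $X\neq\mathbb D_{n+2}$ then $X$ has at least one essential boundary curve $\mathcal C$; connectedness of $X$ forbids two non-nested essential boundary curves from both surrounding $\{p_1,p_{n+2}\}$ while lying ``outside'' one another, so there is exactly one essential boundary curve $\mathcal C$, and $\mathcal C$ must surround both $p_1$ and $p_{n+2}$ (otherwise some curve of $\mathcal K_C$ sits in $D$ and is missed). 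Moreover $\mathcal C$ can surround no third puncture: if it did, one could find a curve $c$ in its interior surrounding two punctures but avoiding, say, $p_{n+2}$, giving a curve of $\mathcal K_C$ disjoint from $X$. This forces $X=\mathbb D_{n+2}\setminus D$ of type (i). In the \emph{second case} ($\partial\mathbb D_{n+2}\not\subset\partial X$), the unique outermost boundary curve $\mathcal C$ of $X$ must surround both $p_1$ and $p_{n+2}$ (else $\mathcal C$ itself is a curve of $\mathcal K_C$ disjoint from $X$), and it must surround exactly $n+1$ punctures in total (else its exterior holds at least two punctures and contains an entire curve of $\mathcal K_C$); this yields type (ii) when there is no inner boundary. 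When an inner boundary exists, the same connectedness-and-nesting argument pins down a unique innermost essential curve $\mathcal C'$ surrounding exactly $\{p_1,p_{n+2}\}$, giving $X=X'\setminus D$ of type (iii).

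For the final clause, that no two witnesses are disjoint, I would run a case-by-case check over the (unordered) pairs of types, just as in Lemma \ref{L:Witness}. The crucial structural feature making this work is that \emph{every} witness contains both $p_1$ and $p_{n+2}$ in its ``essential region'': type (ii) and (iii) witnesses have an outer curve enclosing both distinguished punctures, while type (i) witnesses are the complement of a small disk around both distinguished punctures, so they still meet any region containing $p_1$ and $p_{n+2}$. Two putative disjoint witnesses $X,Y$ would have to partition the punctures so that one of them avoids $\{p_1,p_{n+2}\}$ entirely, contradicting this feature; hence disjointness is impossible.

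The main obstacle I anticipate is the bookkeeping in the converse direction: carefully justifying, via connectedness of $X$, that witnesses cannot have two essential boundary curves both enclosing the distinguished \emph{pair} $\{p_1,p_{n+2}\}$. In the $\widetilde A_n$ case the distinguished set is a single puncture, so ``both curves surround $p_1$'' immediately forces them to be nested; with a two-element distinguished set one must argue slightly more carefully that two non-nested essential curves cannot each enclose both $p_1$ and $p_{n+2}$, which follows because such curves would have to cross. Once this nesting rigidity is established the rest is routine planar topology, completing the proof and, together with Lemma \ref{L:TwistFreeC} and Theorem \ref{T:Vokes}, the hyperbolicity of $\mathcal K_C$ and hence of $\mathcal C_{parab}(\widetilde C_n)$.
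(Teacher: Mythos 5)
Your proposal is correct and takes essentially the same approach as the paper: the paper's proof of Lemma \ref{L:WitnessC} is literally ``\emph{mutatis mutandis}, the same as the proof of Lemma \ref{L:Witness}'', and you carry out exactly those mutanda, replacing the distinguished puncture $p_1$ by the pair $\{p_1,p_{n+2}\}$ and correctly observing that two disjoint, non-nested boundary curves cannot both enclose the pair (their interiors would have to intersect), which is the one point where the adaptation needs care. Your sketch of the final non-disjointness claim is at the same level of detail as the paper's own ``direct case-by-case inspection''.
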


\begin{proof}
\emph{Mutatis mutandis}, the proof is the same as the proof of Lemma \ref{L:Witness}. 
\end{proof}

The proof of the statements of Theorem \ref{T:TildeA} concerning $\mathcal C_{parab}(\widetilde C_n)$ is now complete: (i) was seen in the introduction, (ii) is the statement of Corollary \ref{C:MapLambda}, (iii) is the statement of Proposition \ref{P:DenseTildeC} and (iv) results from Lemmas \ref{L:TwistFreeC}, \ref{L:WitnessC} and Theorem \ref{T:Vokes}.

\section{Hyperbolic structures on Artin-Tits groups}\label{S:Comments}

In this section we briefly review some of the -known or conjectural- hyperbolic structures on Artin-Tits groups presented in \cite{CalvezWiest} and we show some connections with our results. A generating set $X$ of a group $G$ is a \emph{hyperbolic structure} if the Cayley graph $\Gamma(G,X)$ of $G$ with respect to $X$ is a hyperbolic metric space. 

\subsection{Hyperbolic structures on $A_{B_n}$}

Let $A_{\Gamma}$ be an Artin-Tits group of spherical type and consider the following generating sets for $A_{\Gamma}$. 

\begin{itemize}
\item $X_{NP}(\Gamma)$ is the union of the normalizers of the proper irreducible standard parabolic subgroups of $A_{\Gamma}$. 
\item $X_P(\Gamma)$ is the union of the proper irreducible standard parabolic subgroup of $A_{\Gamma}$ and the cyclic subgroup generated by the square of the element $\Delta_{\Gamma}$ (recall from Section \ref{S:ArtinGroups} that $\Delta_{\Gamma}$ is the lift of the longest element of the corresponding Coxeter group $W_{\Gamma}$).
\item $X_{abs}(\Gamma)$ is the set of absorbable elements as described in \cite{CalvezWiest1}. 
\end{itemize}

By \cite[Theorem 1]{CalvezWiest1}, we know that $X_{abs}(\Gamma)$ is a hyperbolic structure on $A_{\Gamma}$; moreover, this is the only one of the three sets which is known to be a hyperbolic structure for all $\Gamma$. 
Both $X_{NP}(A_n)$ and $X_P(A_n)$ are hyperbolic structures; indeed, $Cay(A_{A_n}, X_{NP}(A_n))$ is quasi-isometric to 
$\mathcal{CG}(\mathbb D_{n+1})$ \cite[Proposition 3.2]{CalvezWiest}, while $Cay(A_{A_n}, X_{P}(A_n))$ is quasi-isometric to~$\mathcal A_{\partial}(\mathbb D_{n+1})$, the graph of arcs in $\mathbb D_{n+1}$ both of whose endpoints lie in the boundary $\partial\mathbb D_{n+1}$ \cite[Proposition 3.4]{CalvezWiest}. 

A classical argument (see \cite[Lemma 2.5, Proposition 4.4]{CalvezWiest}) shows that (except for dihedral Artin-Tits groups), $Cay(A_{\Gamma}, X_{NP}(\Gamma))$ is quasi-isometric to the graph of irreducible parabolic subgroups of $A_{\Gamma}$. Therefore, Theorem \ref{T:B} can be rephrased by saying that $X_{NP}(B_n)$ is a hyperbolic structure on $A_{B_n}$. We shall prove an analogous statement for the generating set $X_P(B_n)$. 
Again, this will be obtained by comparing $Cay(A_{B_n},X_P(B_n))$ and $Cay(A_{A_n},X_P(A_n))$.

Recall from Section \ref{S:B} the monomorphism $\eta_n: A_{B_n}\longrightarrow A_{A_n}$ whose image is the subgroup $\mathfrak P_1$ of 1-pure braids on $(n+1)$ strands. For $1\leqslant i\leqslant n$, let $a_i=\sigma_i\ldots \sigma_1$ and $a_0=Id$; for each $y\in A_{A_n}$, there is a unique~$i\in \{0,\ldots,n\}$ such that $ya_i\in \mathfrak P_1$. For any braid $y\in A_{A_n}$, we denote by $\pi_y$ the associated permutation in $\mathfrak S_{n+1}$. The next lemma resembles Lemma \ref{L:ActionOfai}. 

\begin{lemma}\label{L:Technical}
Let $I$ be a proper subinterval of $[n]$, $m=\min(I)$ and $k=\#I$, so that the circle $\mathcal C_I$ in $\mathbb D_{n+1}$ surrounds the punctures $m$ to $m+k$. 
Let $0\leqslant i_0,j_0\leqslant n$.  
%Let $I'=\{i+1\ |\ i\in I\}$ (whenever $\max(I)<n$). 
Let $g\in A_I$ and suppose that $z=a_{i_0}^{-1}ga_{j_0}$ is \emph{1-pure}. 
\begin{itemize}
\item[(i)] If $i_0+1<m$, then $z=g\in A_I$. 
\item[(ii)] If $i_0+1>m+k$, then $z=sh(g)\in A_{I'}$, where $I'=\{i+1\ |\ i\in I\}$. 
\item[(iii)] If $m\leqslant i_0+1\leqslant m+k$, then $z^{\xi_I}\in A_{\{1,\ldots,k\}}$.
\end{itemize}
Here, $\xi_I\in \mathfrak P_1$ is the braid defined in Section \ref{S:Correspondence} which satisfies $A_I^{a_{m-1}\xi_I}=A_{[1,k]}$ and $sh$ denotes the shift homomorphism $\sigma_i\mapsto \sigma_{i+1}$. 
\end{lemma}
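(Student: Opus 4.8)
The plan is to reduce everything to the single permutation equation imposed by the \emph{1-purity} of $z$, and then to handle the three cases by direct manipulation inside the braid group.

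First I would compute the permutation $\pi_z$. Since the action is on the right, $\pi_z=\pi_{a_{j_0}}\circ \pi_g\circ \pi_{a_{i_0}}^{-1}$, and the identity $\pi_{a_i}(i+1)=1$ recalled in Section \ref{S:Correspondence} gives $\pi_{a_{i_0}}^{-1}(1)=i_0+1$, while $j_0+1$ is the unique preimage of $1$ under $\pi_{a_{j_0}}$. Hence $z$ is 1-pure if and only if $\pi_g(i_0+1)=j_0+1$. Because $g\in A_I$, its permutation $\pi_g$ permutes the block $\{m,\dots,m+k\}$ and fixes every other point; this is the observation that separates the three cases according to where $i_0+1$ lands relative to that block.

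In cases (i) and (ii) we have $i_0+1\notin\{m,\dots,m+k\}$, so $\pi_g(i_0+1)=i_0+1$ and the equation above forces $j_0=i_0$; thus $z=g^{a_{i_0}}$ is a genuine conjugate. For (i), the hypothesis $i_0+1<m$ means every letter of $a_{i_0}=\sigma_{i_0}\cdots\sigma_1$ has index at most $m-2$, hence commutes with each generator $\sigma_m,\dots,\sigma_{m+k-1}$ of $A_I$; therefore $a_{i_0}$ commutes with $g$ and $z=g$. For (ii), I would establish the braid identity $\sigma_j a_{i_0}=a_{i_0}\sigma_{j+1}$ for $j<i_0$ (move $\sigma_j$ rightward through $a_{i_0}$ using far-commutation together with one application of $\sigma_j\sigma_{j+1}\sigma_j=\sigma_{j+1}\sigma_j\sigma_{j+1}$). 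Since $j\le m+k-1<m+k\le i_0$ for every generator of $A_I$, conjugation by $a_{i_0}$ realises exactly the shift $\sigma_j\mapsto\sigma_{j+1}$, so $z=sh(g)\in A_{I'}$.

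The substantial case is (iii), where $i_0+1\in\{m,\dots,m+k\}$; now $\pi_g(i_0+1)$ again lies in $\{m,\dots,m+k\}$, so the 1-purity equation only tells us that $j_0+1\in\{m,\dots,m+k\}$, i.e. $m-1\le j_0\le m+k-1$. This is precisely the range for which I can split off the tail $a_{m-1}$: writing $u=\sigma_{i_0}\cdots\sigma_m$ and $v=\sigma_{j_0}\cdots\sigma_m$ (both understood as the empty word, equal to $1$, when the corresponding index is $m-1$), one has $a_{i_0}=u\,a_{m-1}$ and $a_{j_0}=v\,a_{m-1}$ with $u,v\in A_I$. Consequently $z=a_{m-1}^{-1}(u^{-1}gv)a_{m-1}=(u^{-1}gv)^{a_{m-1}}\in A_I^{a_{m-1}}$, since $u^{-1}gv\in A_I$. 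Conjugating once more by $\xi_I$ and invoking $A_I^{a_{m-1}\xi_I}=A_{[1,k]}$ (the defining property of $\xi_I$, cf. Lemma \ref{L:ActionOfai}(iii)), I conclude $z^{\xi_I}\in A_{[1,k]}$, as desired. The main obstacle is purely bookkeeping: one must be careful that the permutation equation pins $j_0$ down exactly in cases (i)--(ii) but only confines it to the block in case (iii), and that the tail-splitting $a_{i_0}=u\,a_{m-1}$, $a_{j_0}=v\,a_{m-1}$ keeps the inner factor $u^{-1}gv$ inside $A_I$; once these are in place, the three conclusions follow immediately.
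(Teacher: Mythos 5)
Your proposal is correct and follows essentially the same route as the paper's proof: the same reduction of 1-purity to the permutation equation $\pi_g(i_0+1)=j_0+1$, the same deduction that $i_0=j_0$ in cases (i)--(ii) with the same commutation and shift computations, and in case (iii) the same splitting $a_{i_0}=u\,a_{m-1}$, $a_{j_0}=v\,a_{m-1}$ giving $z=(u^{-1}gv)^{a_{m-1}}\in A_I^{a_{m-1}}$ before conjugating by $\xi_I$. No gaps; the edge cases (empty $u$ or $v$ when $i_0+1=m$ or $j_0+1=m$) are handled just as in the paper.
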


\begin{proof}
First observe that as $z$ is 1-pure, we must have $\pi_g(i_0+1)=j_0+1$. Moreover, as $g\in A_I$, $\pi_g(i)=i$ for all $i\in \{1,\ldots,m-1\}\cup\{m+k+1,\ldots,n+1\}$. In particular, if $i_0+1<m$ or $i_0+1>m+k$, we must have $i_0+1=j_0+1$, whence $i_0=j_0$.

(i) Suppose that $i_0+1<m$; as we have just seen, $z=a_{i_0}^{-1}ga_{i_0}$. But $a_{i_0}$ commutes with all letters $\sigma_i, i\in I$ whence $z=g$. 

(ii) Suppose that $i_0+1>m+k$; again $z=a_{i_0}^{-1}ga_{i_0}$. We have, for all $i\in I$, 
\begin{eqnarray*}
a_{i_0}^{-1}\sigma_ia_{i_0} & = &( \sigma_1^{-1}\ldots \sigma_{i_0}^{-1})\sigma_i(\sigma_{i_0}\ldots \sigma_1)\\
 & = &\sigma_1^{-1}\ldots\sigma_i^{-1}(\sigma_{i+1}^{-1}\sigma_i\sigma_{i+1})\sigma_i\ldots \sigma_1\\
  & = & \sigma_1^{-1}\ldots\sigma_i^{-1} (\sigma_i \sigma_{i+1}\sigma_i^{-1})\sigma_i\ldots \sigma_1\\
   & = & \sigma_{i+1},
\end{eqnarray*}
and the claim follows.\\
(iii) Suppose that $m\leqslant i_0+1\leqslant m+k$; then also $m\leqslant j_0+1\leqslant m+k$. 
We have 
\begin{eqnarray*}
z & = & a_{i_0}^{-1}ga_{j_0}\\
 & =  & (\sigma_1^{-1}\ldots\sigma_{m-1}^{-1})(\sigma_m^{-1}\ldots \sigma_{i_0}^{-1}g\sigma_{j_0}\ldots\sigma_{m})(\sigma_{m-1}\ldots \sigma_1)\\
  & = & (\sigma_{1}^{-1}\ldots \sigma_{m-1}^{-1}) g' (\sigma_{m-1}\ldots \sigma_1),\\
 \end{eqnarray*}
 where $g'=(\sigma_m^{-1}\ldots \sigma_{i_0}^{-1})g(\sigma_{j_0}\ldots\sigma_{m})$ (note that the first and third factor may be trivial if $i_0+1=m$ or $j_0+1=m$). Since all crossings involved are inner to $\mathcal C_I$, $g'\in A_I$. 
 We deduce that $z=g'^{a_{m-1}}\in A_I^{a_{m-1}}$ and then by definition of $\xi_I$, $z^{\xi_I}\in A_{[1,k]}$ as claimed. 
\end{proof}

\begin{proposition}\label{P:XPB}
Let $n\geqslant 3$.
The monomorphism $\eta_n: A_{B_n}\longrightarrow A_{A_n}$ induces a quasi-isometry between $Cay(A_{B_n}, X_P(A_{B_n}))$ and $Cay(A_{A_n}, X_P(A_{A_n}))$.
\end{proposition}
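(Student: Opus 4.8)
The plan is to show that $\eta_n$ is a quasi-isometric embedding whose image $\mathfrak P_1$ is coarsely dense, using the coset representatives $a_i$ and Lemma~\ref{L:Technical} as the main tool. Write $d_B$ for the word metric of $Cay(A_{B_n},X_P(B_n))$ and $d_A$ for that of $Cay(A_{A_n},X_P(A_n))$. Since both metrics are left-invariant and $\eta_n$ is a homomorphism, it suffices to compare $d_B(1,g)$ with $d_A(1,\eta_n(g))$ for $g\in A_{B_n}$, and to check that $\mathfrak P_1=\eta_n(A_{B_n})$ is coarsely dense in $(A_{A_n},d_A)$. The latter is immediate: every $y\in A_{A_n}$ lies in a coset $\mathfrak P_1 a_i^{-1}$, and each $a_i=\sigma_i\cdots\sigma_1$ lies in the proper standard parabolic $A_{[1,i]}$ when $i\leqslant n-1$ (so $d_A(1,a_i)\leqslant 1$) and satisfies $a_n=\sigma_n a_{n-1}$ with $\sigma_n\in A_{\{n\}}$ and $a_{n-1}\in A_{[1,n-1]}$ (so $d_A(1,a_n)\leqslant 2$); hence $\mathfrak P_1$ is $2$-dense.

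For the upper bound $d_A(1,\eta_n(g))\leqslant C\,d_B(1,g)$, I would check that $\eta_n$ carries every generator of $X_P(B_n)$ to a set of uniformly bounded $d_A$-diameter and then concatenate along a $d_B$-geodesic. If $h\in B_I$ for a proper subinterval $I$, then by Lemma~\ref{L:Comput}(i) $\eta_n(h)\in A_I\cap\mathfrak P_1\subseteq A_I$, a single generator of $X_P(A_n)$, so $d_A(1,\eta_n(h))\leqslant 1$. For a central generator $\Delta_{B_n}^{2k}$, I would use that $\Delta_{A_n}^2$ is pure, hence lies in $\mathfrak P_1$, so $\Delta_{A_n}^2=\eta_n(c)$ for a (necessarily central) $c\in A_{B_n}$; thus $\eta_n(\Delta_{B_n})^{j}=\Delta_{A_n}^2$ for some $j$, and the class of $\eta_n(\Delta_{B_n})$ in $A_{A_n}/Z(A_{A_n})$ has finite order. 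Consequently $\eta_n(\Delta_{B_n}^{2k})$ equals an element of a fixed finite set times a power of $\Delta_{A_n}^2\in X_P(A_n)$, and so stays at bounded $d_A$-distance from $1$ independently of $k$.

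The core of the argument is the reverse bound $d_B(1,g)\leqslant C_0\,d_A(1,\eta_n(g))$. Take a $d_A$-geodesic $1=y_0,y_1,\dots,y_\ell=\eta_n(g)$, so $y_{t+1}=y_t h_t$ with $h_t\in X_P(A_n)$. Projecting into $\mathfrak P_1$, set $\tilde y_t=y_t a_{i_t}$, where $i_t$ is the unique index with $y_t a_{i_t}\in\mathfrak P_1$; since $y_0$ and $y_\ell$ are already in $\mathfrak P_1$ we have $\tilde y_0=1$ and $\tilde y_\ell=\eta_n(g)$. Then $\eta_n(g)=\prod_t z_t$ with $z_t=\tilde y_t^{-1}\tilde y_{t+1}=a_{i_t}^{-1}h_t a_{i_{t+1}}\in\mathfrak P_1$, and writing $z_t=\eta_n(w_t)$ gives $g=\prod_t w_t$. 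The point is to bound $d_B(1,w_t)$ uniformly. When $h_t\in A_I$, the element $z_t$ is exactly of the form treated by Lemma~\ref{L:Technical}: in its cases (i)--(ii), $z_t\in A_I\cap\mathfrak P_1=\eta_n(B_I)$ or $A_{I'}\cap\mathfrak P_1=\eta_n(B_{I'})$ (with $I'$ again a proper subinterval), so $w_t\in B_I$ or $B_{I'}$ and $d_B(1,w_t)\leqslant 1$; in case (iii), $z_t^{\xi_I}\in A_{[1,k]}\cap\mathfrak P_1=\eta_n(B_{[1,k]})$, and since $\xi_I\in\mathfrak P_1$ (Remark~\ref{R:Pure}) equals $\eta_n(u_I)$, we get $w_t=u_I\,v\,u_I^{-1}$ with $v\in B_{[1,k]}$, whence $d_B(1,w_t)\leqslant 1+2d_B(1,u_I)$. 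When $h_t\in\langle\Delta_{A_n}^2\rangle$, $1$-purity of $z_t$ forces $i_t=i_{t+1}$ (from $\pi_{a_i}(i+1)=1$, i.e. $\pi_{a_i}^{-1}(1)=i+1$), so $z_t\in\langle\Delta_{A_n}^2\rangle$ and $w_t$ is a power of $\Delta_{B_n}$, again of bounded $d_B$-distance. As there are only finitely many subintervals $I$, the constant $C_0=\max_t d_B(1,w_t)$ is uniform, giving $d_B(1,g)\leqslant C_0\,\ell=C_0\,d_A(1,\eta_n(g))$.

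Putting the two estimates together shows $\eta_n$ is a quasi-isometric embedding, and combined with the coarse density of $\mathfrak P_1$ it is the desired quasi-isometry. I expect the main obstacle to be the reverse bound, and within it the careful application of Lemma~\ref{L:Technical} (especially case (iii), where one must absorb the conjugating braid $\xi_I$ into a bounded $B$-neighbourhood) together with the verification that central steps contribute only bounded $d_B$-length; the auxiliary fact that a power of $\eta_n(\Delta_{B_n})$ is the central full twist $\Delta_{A_n}^2$ underlies both the handling of central generators and of central steps.
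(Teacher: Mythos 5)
Your proof is correct and takes essentially the same approach as the paper's: your coset projection $y_t\mapsto\eta_n^{-1}(y_ta_{i_t})$ is exactly the paper's quasi-inverse $\psi_n$, your per-edge estimates via Lemma \ref{L:Technical} (including absorbing $\xi_I$ at cost $1+2\,d_B(1,u_I)$ and the purity argument forcing $i_t=i_{t+1}$ on central steps) reproduce the paper's proof that $\psi_n$ is Lipschitz, and your $2$-density of $\mathfrak P_1$ matches the paper's observation that each $a_i$ is a product of at most two elements of $X_P(A_n)$. The only cosmetic difference is that you handle the central generators abstractly (a power of $\eta_n(\Delta_{B_n})$ equals $\Delta_{A_n}^2$, so $\eta_n(\Delta_{B_n})$ has finite order modulo the center), whereas the paper uses the explicit identity $\eta_n(\Delta_{B_n}^{2k})=\Delta_{A_n}^{4k}$.
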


\begin{proof}
Throughout the proof,  the notation $||x||_{X_P(B_n)}$ means the word length of $x\in A_{B_n}$ with respect to the generating set $X_P(B_n)$. 

 We know by Lemma \ref{L:Comput}(i) that given a proper subinterval $I$ of $[n]$, $\eta_n(B_I)=A_I\cap \mathfrak P_1$; therefore, for $x,y\in A_{B_n}$, $x^{-1}y\in B_I$ is equivalent to $\eta_n(x)^{-1}\eta(y)\in A_I$. Similarly, $x^{-1}y=\Delta_{B_n}^{2k}$ for some $k\in \mathbb Z$ is equivalent to $\eta_n(x)^{-1}\eta_n(y)=\eta_n(\Delta_{B_n}^{2k})=\Delta_{A_n}^{4k}$. Therefore,~$\eta_n$ induces a 1-Lipschitz map from $Cay(A_{B_n}, X_P(B_n))$ to $Cay(A_{A_n}, X_P(A_n))$. 
 
We define a map $\psi_n: A_{A_n} \longrightarrow A_{B_n}$ in the following way. Given $y\in A_{A_n}$, let $i\in \{0,...,n\}$ (it is unique!) be such that $ya_i\in  \mathfrak P_1=Im(\eta_n)$, and define $\psi(y) =\eta_n^{-1}(ya_i)$. Let us see that $\psi_n$ is a quasi-inverse for $\eta_n$. Indeed, we have by construction, for $x\in A_{B_n}$, $\psi_n\circ \eta_n (x)=x$. 
Conversely, for $y\in A_{A_n}$, $y$ and $\eta_n\circ\psi_n(y)$ differ by $a_i$, for some $0\leqslant i\leqslant n$; however $a_i$ can be written as a product of at most 2 elements in $X_P(A_{A_n})$, whence the distance between $y$ and $\eta_n\circ\psi_n(y)$ in $Cay(A_{A_n},X_P(A_n))$ is at most 2. 

We show finally that the map $\psi_n$ is Lipschitz. Let $y,y'\in A_{A_n}$ be adjacent in $Cay(A_{A_n},X_P(A_n))$; write $g=y^{-1}y'$. This means that $g\in X_P(A_n)$, that is $g=\Delta_{A_n}^{2k}$ for $k\in \mathbb Z$, or $g\in A_I$ for some proper subinterval $I$ of $[n]$. We have unique $i_0, j_0$ so that $ya_{i_0}\in \mathfrak P_1$ and $y'a_{j_0}\in \mathfrak P_1$. Then also $a_{i_0}^{-1}g a_{j_0}$ is 1-pure. Let $x=\eta_n^{-1}(ya_{i_0})$ and $x'=\eta_n^{-1}(y'a_{j_0})$; we must estimate $||x^{-1}x'||_{X_P(B_n)}$. 

If $g=\Delta_{A_n}^{2k}$, $g$ is pure. As $a_{i_0}^{-1}g a_{j_0}$ is 1-pure, we must have $i_0+1=\pi_g(i_0+1)=j_0+1$, whence $i_0=j_0$.  We deduce, as $g$ is central, $\eta_n(x^{-1}x')=a_{i_0}^{-1}g a_{j_0}=g=\Delta_{A_n}^{2k}$ and $x^{-1}x'=\Delta_{B_n}^k$. In summary, $||x^{-1}x'||_{X_P(B_n)}\leqslant ||\Delta_{B_n}||_{X_P(A_{B_n})}+1$ (and this bound is 1 if $k$ is even). 

If $g\in A_I$ for some proper subinterval $I$ of $[n]$, then Lemma \ref{L:Technical} says that there is some proper subinterval $J$ of $[n]$ so that either $a_{i_0}^{-1}ga_{j_0}\in A_J$ or~$(a_{i_0}^{-1}ga_{j_0})^{\xi_I}\in A_J$. Pulling back to $A_{B_n}$ this assertion, we see 
using Lemma \ref{L:Comput}(i) that either $x^{-1}x'\in B_J$ or $(x^{-1}x')^{\eta_n^{-1}(\xi_I)}\in B_J$. 
It follows that $||x^{-1}x'||_{X_{P}(B_n)}\leqslant 1+2||\eta_n^{-1}(\xi_I)||_{X_P(B_n)}.$ This is uniformly bounded as the set of proper connected subintervals of~$[n]$ is finite. 
\end{proof}

We can now complete the proof of Theorem \ref{T:XP}. By \cite[Proposition 3.4]{CalvezWiest}, $X_P(A_n)$ is a hyperbolic structure on $A_{A_n}$, that is $Cay(A_{A_n},X_P(A_n))$ is hyperbolic. By Proposition \ref{P:XPB}, $Cay(A_{B_n},X_P(B_n))$ is hyperbolic as well, that is $X_P(B_n)$ is a hyperbolic structure on $A_{B_n}$. 
The second part of the theorem follows immediately from Proposition \ref{P:XPB}, Theorem \ref{T:B} and the corresponding fact for~$A_{A_n}$ \cite[Proposition~4.19]{CalvezWiest}.

\subsection{Non-spherical type}
Even if $A_{\Gamma}$ is not of spherical type, we can extend the definition of $X_{NP}(\Gamma)$ and $X_{P}(\Gamma)$, just dropping the powers of~$\Delta_{\Gamma}$ in the definition of $X_P(\Gamma)$. 
We will see that $X_{NP}(\widetilde  {X_n})$ is a hyperbolic structure on $A_{\widetilde Z_n}$, for $Z=A$ or $C$. 
We leave open whether $X_P(\widetilde Z_n)$ is a hyperbolic structure on~$A_{\widetilde Z_n}$. 

The proof rests on a technical result --simultaneous standardization of adjacent proper irreducible parabolic subgroups-- which generalizes \cite[Proposition 4.4]{LeeLee} and \cite[Section 11]{CGGMW} and could be interesting on its own. The result is split into the next two propositions. 

\begin{proposition}\label{P:EdgeOrbitTildeA}
Let $P,Q$ be adjacent proper irreducible parabolic subgroups of $A_{\widetilde A_n}$. Then there exists $\mathfrak s\in A_{\widetilde A_n}$ so that $P^{\mathfrak s}$ and $Q^{\mathfrak s}$ are standard. 
\end{proposition}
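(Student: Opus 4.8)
The plan is to transport the whole problem into the curve graph of $\mathbb D_{n+2}$ by means of the graph isomorphism $\Theta_n\colon \mathcal C_{parab}(\widetilde A_n)\to \mathcal K_A$ of Corollary \ref{C:MapTheta}, to prove there a \emph{simultaneous} standardization statement, and finally to correct the resulting mapping class by a suitable power of $\rho$. Put $c_1=\Theta_n(P)$ and $c_2=\Theta_n(Q)$; by Corollary \ref{C:MapTheta} these are curves of $\mathbb D_{n+2}$ not surrounding the first puncture $p_1$, and since $P$ and $Q$ are adjacent, $c_1$ and $c_2$ are disjoint. As in the proof of Proposition \ref{P:Image}, the action of $A_{B_{n+1}}$ on curves is realized through $\eta_{n+1}$ (Corollary \ref{C:MapH}), so that $\Theta_n(P^{\mathfrak s})=c_1^{\,\eta_{n+1}(\theta_n(\mathfrak s))}$ and likewise for $Q$. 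Moreover, the round curves not surrounding $p_1$ are exactly the images under $\Theta_n$ of the standard parabolic subgroups $\widetilde A_I$ with $I$ a subinterval of $\{1,\dots,n\}$ (these correspond under $\mathfrak H_{n+1}$ to the braid subgroups $B_J$ with $J\subseteq\{2,\dots,n+1\}$). Hence it is enough to produce $\mathfrak s\in A_{\widetilde A_n}$ for which $c_1^{\,\eta_{n+1}(\theta_n(\mathfrak s))}$ and $c_2^{\,\eta_{n+1}(\theta_n(\mathfrak s))}$ are both round and do not surround $p_1$.

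The key step, which I expect to be the main obstacle, is the following claim: there is a \emph{$1$-pure} braid $w\in\mathfrak P_1$ such that $c_1^{\,w}$ and $c_2^{\,w}$ are simultaneously round and neither surrounds $p_1$. Since $w$ fixes $p_1$, the property of not surrounding $p_1$ is automatically preserved, so the only task is to round the two disjoint curves at once while keeping $p_1$ in the unbounded complementary region. I would establish this by a change-of-coordinates argument: the multicurve $c_1\cup c_2$ has a topological type in $\mathbb D_{n+2}$ (its nesting pattern, the number of punctures in each complementary region, and the regions containing $p_1$ and $\partial\mathbb D_{n+2}$), and because $p_1$ lies in the interior of neither curve, this type is realized by a round multicurve $c_1'\cup c_2'$ with $p_1$ placed as the leftmost puncture --- nested consecutive blocks when $c_1,c_2$ are nested, and two disjoint consecutive blocks when they are unnested. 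The change-of-coordinates principle then furnishes a homeomorphism of $\mathbb D_{n+2}$, fixing $\partial\mathbb D_{n+2}$ pointwise and fixing $p_1$, which carries $c_1\cup c_2$ to $c_1'\cup c_2'$; its mapping class is the desired element $w\in\mathfrak P_1$. The point to stress is that one cannot simply standardize $c_1$ and then $c_2$ by two successive applications of Proposition \ref{P:Surj}, because rounding the second curve may destroy the first (for instance when $c_2$ encloses two punctures separated by the block enclosed by $c_1$); the simultaneous realization, together with the constraint of fixing $p_1$, is exactly what forces the change-of-coordinates formulation.

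It remains to remove the discrepancy between an arbitrary $1$-pure braid and one arising from $A_{\widetilde A_n}$. Write $w=\eta_{n+1}(t)$ for the unique $t\in A_{B_{n+1}}$ with $\eta_{n+1}(t)=w$; by the semidirect product decomposition of Proposition \ref{P:SemiDirect}(iii), $t=\theta_n(\mathfrak s)\rho^{k}$ for some $\mathfrak s\in A_{\widetilde A_n}$ and $k\in\mathbb Z$, whence $w=\eta_{n+1}(\theta_n(\mathfrak s))\,\eta_{n+1}(\rho)^{k}$ and $c_i^{\,\eta_{n+1}(\theta_n(\mathfrak s))}=\bigl(c_i^{\,w}\bigr)^{\eta_{n+1}(\rho)^{-k}}$ for $i=1,2$. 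By Proposition \ref{P:SemiDirect}(ii), conjugation by $\rho$ permutes the standard generators cyclically, hence sends standard parabolic subgroups of $A_{\widetilde A_n}$ to standard parabolic subgroups; translating through $\mathfrak H_{n+1}$ and using $\mathfrak H_{n+1}(R^h)=\mathfrak H_{n+1}(R)^{\eta_{n+1}(h)}$, the mapping class $\eta_{n+1}(\rho)$ therefore preserves the set $\Theta_n(\{\text{standard parabolics}\})$ of standard curves. Since each $c_i^{\,w}$ is a round curve not surrounding $p_1$, it is standard, so $\bigl(c_i^{\,w}\bigr)^{\eta_{n+1}(\rho)^{-k}}=c_i^{\,\eta_{n+1}(\theta_n(\mathfrak s))}$ is standard as well; that is, both $\Theta_n(P^{\mathfrak s})$ and $\Theta_n(Q^{\mathfrak s})$ are standard. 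As $\Theta_n$ is a graph isomorphism onto $\mathcal K_A$, it follows that $P^{\mathfrak s}$ and $Q^{\mathfrak s}$ are standard parabolic subgroups of $A_{\widetilde A_n}$, which is what we wanted.
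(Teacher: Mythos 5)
Your proof is correct, but it replaces the paper's key step by a genuinely different one. The paper never leaves the algebraic side: it applies Proposition \ref{P:AdjTildeA} to see that $\theta_n(P)$ and $\theta_n(Q)$ are adjacent in $\mathcal C_{parab}(B_{n+1})$, then invokes the simultaneous standardization theorem for spherical-type Artin-Tits groups (\cite[Section~11]{CGGMW}) to get a single $\zeta\in A_{B_{n+1}}$ with $\theta_n(P)^{\zeta}$ and $\theta_n(Q)^{\zeta}$ standard, and uses Proposition \ref{P:Image} to see these are braid subgroups $B_I, B_J$ with $1\notin I,J$ --- here the analogue of your $1$-pureness constraint comes for free, since $\zeta$ lives in $A_{B_{n+1}}$ from the start. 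You instead transport everything to $\mathcal{CG}(\mathbb D_{n+2})$ and prove simultaneous rounding by a $1$-pure braid via the change-of-coordinates principle, carrying the constraint topologically by recording which complementary region of $c_1\cup c_2$ contains $p_1$; this works because $p_1$ lies in the outer region (the one meeting $\partial\mathbb D_{n+2}$) in both the source and the target configurations, so the homeomorphism can be corrected within that region to fix $p_1$ --- that small supplement to the usual unlabeled statement of change of coordinates is the one point your sketch leaves implicit, and you should make it explicit if you write this up. Your closing step (decomposing $t=\theta_n(\mathfrak s)\rho^k$ via Proposition \ref{P:SemiDirect}(iii) and using the cyclic shift of Proposition \ref{P:SemiDirect}(ii) to see that conjugation by $\rho^{-k}$ preserves the family of standard parabolic subgroups, with indices read modulo $n+1$) coincides with the paper's. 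As for trade-offs: your route is more self-contained and topological, avoiding the citation of \cite{CGGMW} for standardization (and your remark that two successive applications of Proposition \ref{P:Surj} cannot work is exactly the right diagnosis of why something simultaneous is needed); the paper's route is shorter and purely algebraic, and moreover generalizes verbatim to settings where no convenient surface model is available, which is in keeping with the authors' stated wish for algebraic arguments.
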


\begin{proof}
By Proposition \ref{P:AdjTildeA}, $\theta_n(P)$ and $\theta_n(Q)$ are adjacent in $\mathcal C_{parab}(B_{n+1})$. 
As $A_{B_{n+1}}$ is of spherical type, by \cite[Section 11]{CGGMW}, there exists $\zeta\in A_{B_{n+1}}$ so that $\theta_n(P)^{\zeta}$ and $\theta_n(Q)^{\zeta}$ are standard parabolic subgroups of $A_{B_{n+1}}$. 
By Proposition \ref{P:Image}, $\theta_n(P)$, $\theta_n(Q)$ and their respective standard conjugates $\theta_n(P)^{\zeta}$ and $\theta_n(Q)^{\zeta}$ are braid subgroups of $A_{B_{n+1}}$, that is 
$\theta_n(P)^{\zeta}=B_I$ and $\theta_n(Q)^{\zeta}=B_J$ for some proper subintervals~$I$ and $J$ of $[n+1]$ not containing 1.
Let $I'=\{i-1\ |\ i\in I\}$ and $J'=\{j-1\ |\ j\in J\}$, so that $\theta_n(P)^{\zeta}=\theta_n(\widetilde A_{I'})$ and $\theta_n(Q)^{\zeta}=\theta_n(\widetilde A_{J'})$. 

Suppose first that $\zeta=\theta_n(\mathfrak s)$ for some $\mathfrak s\in A_{\widetilde A_n}$. Then we have $\theta_n(P^{\mathfrak s})=\theta_n(P)^{\zeta}=\theta_n(\widetilde A_{I'})$ and $\theta_n(Q^{\mathfrak s})=\theta_n(Q)^{\zeta}=\theta_n(\widetilde A_{J'})$. We deduce from the injectivity of $\theta_n$ that $P^{\mathfrak s}=\widetilde A_{I'}$ and $Q^{\mathfrak s}=\widetilde A_{J'}$ are both standard, showing our claim. 

If on the contrary $\zeta$ is not in the image of $\theta_n$, in view of Proposition \ref{P:SemiDirect}(iii) we can write $\zeta=\zeta_0\rho^r$, where $\zeta_0=\theta_n(\mathfrak s)$ for some $\mathfrak s\in A_{\widetilde A_n}$ and 
 $r\in \mathbb Z$.
Using Proposition \ref{P:SemiDirect}(ii), we then have 
$$\theta_n(P^{\mathfrak s})=\theta_n(P)^{\zeta_0}=\theta_n(P)^{\zeta\rho^{-r}}=\theta_n (\widetilde A_{I'}) ^{\rho^{-r}}=\theta_n(\widetilde A_{I''})$$ and 
$$\theta_n(Q^{\mathfrak s})=\theta_n(Q)^{\zeta_0}=\theta_n(Q)^{\zeta\rho^{-r}}=\theta_n (\widetilde A_{J'}) ^{\rho^{-r}}=\theta_n(\widetilde A_{J''}),$$
where $I''=\{i'-r\ |\  i'\in I'\}$, $J''=\{j'-r\ |\  j'\in J'\}$ and the indices are taken modulo $n+1$. 
We obtain that $P^{\mathfrak s}=\widetilde A_{I''}$ and $Q^{\mathfrak s}=\widetilde A_{J''}$ are both standard, as needed. 
\end{proof}

\begin{proposition}\label{P:EdgeOrbitTildeC}
Let $P,Q$ be adjacent proper irreducible parabolic subgroups of $A_{\widetilde C_n}$. Then there exists $\mathfrak s\in A_{\widetilde C_n}$ so that $P^{\mathfrak s}$ and $Q^{\mathfrak s}$ are standard. 
\end{proposition}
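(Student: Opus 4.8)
The plan is to mirror the proof of Proposition \ref{P:EdgeOrbitTildeA}, but with the embedding $\lambda_n\colon A_{\widetilde C_n}\to A_{A_{n+1}}$ playing the role of $\theta_n$. First I would translate the statement into the curve graph. By Corollary \ref{C:MapLambda} and Proposition \ref{P:AdjTildeC}, the subgroups $P$ and $Q$ correspond to disjoint curves $\mathcal C_P=\Lambda_n(P)$ and $\mathcal C_Q=\Lambda_n(Q)$ in $\mathbb D_{n+2}$, both lying in $\mathcal K_C$, so that neither surrounds both the first and the last puncture. By Proposition \ref{P:WellDefTildeC}, it suffices to produce a braid $\beta\in\mathfrak P=\mathrm{Im}(\lambda_n)$ such that $\mathcal C_P^{\beta}$ and $\mathcal C_Q^{\beta}$ are simultaneously round: indeed, writing $\mathcal C_P^{\beta}=\mathcal C_{I'}$ and $\mathcal C_Q^{\beta}=\mathcal C_{J'}$ and setting $\mathfrak s=\lambda_n^{-1}(\beta)\in A_{\widetilde C_n}$, Proposition \ref{P:WellDefTildeC} yields $P^{\mathfrak s}=\widetilde C_{I'}$ and $Q^{\mathfrak s}=\widetilde C_{J'}$, both standard.

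Next I would obtain a simultaneous standardization inside the ambient \emph{spherical-type} group $A_{A_{n+1}}$. Since $\mathcal C_P$ and $\mathcal C_Q$ are disjoint, the associated parabolic subgroups $\mathfrak f_{n+1}(\mathcal C_P)$ and $\mathfrak f_{n+1}(\mathcal C_Q)$ are adjacent in $\mathcal C_{parab}(A_{n+1})$, so by simultaneous standardization in spherical type (\cite[Section 11]{CGGMW}, cf. \cite[Proposition 4.4]{LeeLee}) there is some $\zeta\in A_{A_{n+1}}$ conjugating both of them to standard parabolic subgroups; geometrically, $\mathcal C_P^{\zeta}=\mathcal C_{I'}$ and $\mathcal C_Q^{\zeta}=\mathcal C_{J'}$ are simultaneously round. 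The remaining, genuinely new, task is to replace $\zeta$ by a braid lying in the finite-index subgroup $\mathfrak P$ without destroying the roundness of either curve.

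This correction is where the argument departs from the $\widetilde A_n$ case. There one could invoke the semidirect decomposition $A_{B_{n+1}}=\theta_n(\widetilde A_n)\rtimes\langle\rho\rangle$ of Proposition \ref{P:SemiDirect}, write $\zeta=\theta_n(\mathfrak s)\rho^r$, and absorb the coset factor $\rho^r$ because $\rho$ cyclically carries standard generators to standard generators. No such decomposition is available for $\mathfrak P\subset A_{A_{n+1}}$, so instead I would correct $\zeta$ by hand, following the single-curve argument of Proposition \ref{P:CurveTildeC}. Concretely, I would first right-multiply $\zeta$ by a suitable coset representative $a_{i_0}$ to make the product $1$-pure, controlling its effect on both round curves through Lemma \ref{L:ActionOfai} and re-standardizing, whenever a puncture falls in the interior of one of the curves, by the $1$-pure braid $\xi_I$; then I would right-multiply by a suitable $b_{j_0}$ to fix the last position, using Lemma \ref{L:ActionOfbi} and re-standardizing by a braid $\xi'_I$, which by Remark \ref{R:Pure} is both $1$- and $(n+2)$-pure and hence preserves $1$-purity. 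The hypothesis $\mathcal C_P,\mathcal C_Q\in\mathcal K_C$ is exactly what guarantees that the first and last punctures can be placed outside the relevant curves, so that both remain round through the correction.

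The step I expect to be the main obstacle is precisely this simultaneous correction into $\mathfrak P$: unlike the single curve of Proposition \ref{P:CurveTildeC}, one must keep \emph{two} curves round at once, and the verification splits into cases according to whether $\mathcal C_P$ and $\mathcal C_Q$ are nested or have disjoint interiors, and according to the positions of the first and last punctures relative to both curves. Once a correcting braid $\beta\in\mathfrak P$ is produced, the conclusion is immediate: with $\mathfrak s=\lambda_n^{-1}(\beta)$, Proposition \ref{P:WellDefTildeC} gives that $P^{\mathfrak s}$ and $Q^{\mathfrak s}$ are standard, as required.
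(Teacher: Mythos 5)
Your overall architecture coincides with the paper's: translate to curves via $\Lambda_n$, simultaneously standardize using the spherical-type result of \cite[Section 11]{CGGMW}, then correct the standardizing braid into $\mathfrak P$ and pull back through Proposition \ref{P:WellDefTildeC}. The one structural difference is where the spherical-type standardization happens. The paper applies it in $A_{B_{n+1}}$, via the isomorphism $\mathfrak H_{n+1}$ of Corollary \ref{C:MapH}, and sets $\alpha=\eta_{n+1}(g)$; since the image of $\eta_{n+1}$ is the subgroup $\mathfrak P_1$ of $1$-pure braids, $\alpha$ is automatically $1$-pure, and only the last strand needs correcting, by a single coset representative $b_{j_0}$ with $j_0=\pi_{\alpha}(n+2)$. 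Your version standardizes in $A_{A_{n+1}}$ and therefore must run the correction twice, once with $a_{i_0}$ and once with $b_{j_0}$, doubling the case analysis. Your route is viable, but leveraging the already-established $B_{n+1}$ machinery (Theorem \ref{T:B}) buys a halving of the combinatorics.

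That said, the step you explicitly defer as ``the main obstacle'' is precisely the content of the paper's proof, and your proposal does not resolve it. Two things require argument. First, when only one of the two round curves, say $\mathcal C_I$, surrounds the puncture $j_0$, you must check that the restandardizing braid $\xi'_I$ does not destroy the roundness of the other curve $\mathcal C_J^{b_{j_0}}$; this holds because $\xi_{p,q,r}$ permutes two blocks of strands without internal crossings, but it is a verification, not an automatic consequence of Lemma \ref{L:ActionOfbi}. Second, and more seriously, when both curves surround $j_0$ they must be nested (being disjoint), and then neither $\xi'_I$ nor $\xi'_J$ alone standardizes both: the paper constructs the explicit braid $\xi=\xi'_J\,\xi_{m_I,\,m_I+k_I-k_J-1,\,n+1-k_J}$ (with $\mathcal C_J$ nested inside $\mathcal C_I$), checks that it standardizes both curves simultaneously, and checks that it uses neither $\sigma_1$ nor $\sigma_{n+1}$, hence lies in $\mathfrak P$. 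Your plan names this nested case but supplies no braid for it; in your two-stage variant the analogous construction would be needed twice. A final small precision: Remark \ref{R:Pure} only gives that $\xi'_I$ has its last strand straight; its $1$-purity requires $m_I\geqslant 2$, which the paper deduces from the $\mathcal K_C$ hypothesis --- since $\mathcal C_I$ surrounds $j_0$, the image of the last puncture under the $1$-pure braid $\alpha$, it cannot also surround the first puncture. You gesture at this correctly, but it is a hypothesis-dependent fact rather than part of Remark \ref{R:Pure}.
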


\begin{proof}
Recall the graph isomorphism $\Lambda_n$ from Corollary \ref{C:MapLambda}: 
$\mathcal C_1=\Lambda_n(P)$ and $\mathcal C_2=\Lambda_n(Q)$ are 
disjoint curves in $\mathbb D_{n+2}$ which do not surround both the first and the last punctures. 
Under this isomorphism, standard parabolic subgroups of $A_{\widetilde C_n}$ are in correspondence with standard curves of~$\mathbb D_{n+2}$. So our claim will follow from proving 
that there exists $\zeta_0$ in $\mathfrak P=\lambda_n(A_{\widetilde C_n})$ 
so that both~$\mathcal C_1^{\zeta_0}$ and ${\mathcal C_2}^{\zeta_0}$ are standard; in this way, setting $\mathfrak s=\lambda_n^{-1}(\zeta_0)$ will prove the claim. 

Recall the graph isomorphism $\mathfrak H_{n+1}$ from Corollary \ref{C:MapH} which identifies $\mathcal{CG}(\mathbb D_{n+2})$ and $\mathcal C_{parab}(B_{n+1})$. Because $A_{B_{n+1}}$ is of spherical type, simultaneous standardization in $A_{B_{n+1}}$ (see \cite[Section 11]{CGGMW}) yields 
$g\in A_{B_{n+1}}$ such that $\mathfrak H_{n+1}^{-1}(\mathcal C_1)^g$ and $\mathfrak H_{n+1}^{-1}(\mathcal C_2)^g$ are 
standard parabolic subgroups of $A_{B_{n+1}}$. Setting $\alpha=\eta_{n+1}(g)$, we obtain that $\mathcal C_1^{\alpha}$ and $\mathcal C_2^{\alpha}$ are disjoint standard curves, say $\mathcal C_I$ and $\mathcal C_J$ for some proper subintervals $I,J$ of $[n+1]$. Note that $\alpha$ is already 1-pure.

If $\alpha\in \mathfrak P$, we are done by setting $\zeta_0=\alpha$. Otherwise, as in the proof of Proposition \ref{P:AdjTildeC}, let $\pi_{\alpha}$ be the permutation in $\mathfrak S_{n+2}$ associated to $\alpha$, 
$j_0=\pi_{\alpha}(n+2)\in \{2,\ldots, n-1\}$ and $b_{j_0}=\sigma_{j_0}\ldots \sigma_{n+1}$. Then $\alpha b_{j_0}\in \mathfrak P$. 

%The action of $b_{j_0}$ on $\mathcal C_I$ and $\mathcal C_J$ is described in the proof of Proposition \ref{P:AdjTildeC}. 
If both $\mathcal C_I^{b_{j_0}}$ and $\mathcal C_J^{b_{j_0}}$ are standard again, then we can choose $\zeta_0=\alpha b_{j_0}$. Suppose then that~$\mathcal C_I^{b_{j_0}}$ is not standard; then (Lemma \ref{L:ActionOfbi}(iii)) $\mathcal C_I$ must surround the puncture $j_0$ --hence it cannot surround the first puncture. 
If $\mathcal C_J^{b_{j_0}}$ is standard,  then the braid $\xi'_I$ from Lemma \ref{L:ActionOfbi} satisfies that $\mathcal C_e^{\alpha b_{j_0}\xi'_I}$ is standard for $e=1,2$;  moreover, $\xi_i'\in\mathfrak P$ (Remark \ref{R:Pure} and because $\mathcal C_I$ does not surround the first puncture). Therefore we can choose $\zeta_0=\alpha b_{j_0}\xi'_I\in \mathfrak P$. 
If $\mathcal C_J^{b_{j_0}}$ is not standard, then both $\mathcal C_I, \mathcal C_J$ have to be nested. 
Assume that $\mathcal C_J$ is in the interior of $\mathcal C_I$. Let 
$m_I=\min I$, $m_J=\min J$, $k_I=\#I$ and $k_J=\#J$. We define 
%$$\xi= \xi_J' (\sigma_{m_I+k_I-k_J-1}\ldots \sigma_{m_I})\ldots (\sigma_{n-k_J}\ldots \sigma_{n+1-k_I}).$$
$$\xi=\xi'_{J} \xi_{m_I,m_I+k_I-k_J-1,n+1-k_J}.$$
We note that $\xi$ does not use any letter $\sigma_1$ nor $\sigma_{n+1}$, so that $\xi\in \mathfrak P$. Moreover, $\mathcal C_I^{b_{j_0}\xi}$ and $\mathcal C_J^{b j_0\xi}$ are standard. This concludes the proof 
choosing $\zeta_0=\alpha b \xi$. 
\end{proof}

\begin{corollary}
Assume that $\Gamma$ is either $\widetilde A_n$ or $\widetilde C_n$ ($n\geqslant 2$). Then $\mathcal C_{parab}(\Gamma)$ is quasi-isometric to $Cay(A_{\Gamma}, X_{NP}(\Gamma))$ and $X_{NP}(\Gamma)$ is a hyperbolic structure on $A_{\Gamma}$.
\end{corollary}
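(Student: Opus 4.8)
The statement packages together two facts: that $\mathcal C_{parab}(\Gamma)$ is hyperbolic, which is exactly Theorem~\ref{T:TildeA}(iv), and that it is quasi-isometric to $Cay(A_\Gamma,X_{NP}(\Gamma))$. Since hyperbolicity is a quasi-isometry invariant of geodesic spaces, the plan is to establish the quasi-isometry first and then transport hyperbolicity from the graph of parabolic subgroups to the Cayley graph. To produce the quasi-isometry I would rerun, almost verbatim, the classical argument of \cite[Lemma~2.5, Proposition~4.4]{CalvezWiest} invoked in the introduction for the spherical-type case, once its hypotheses are checked in the euclidean setting. Note that $A_{\widetilde A_n}$ and $A_{\widetilde C_n}$ with $n\geqslant 2$ have at least three standard generators, hence are not dihedral, so the exceptional case of that argument does not arise.

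Concretely, the quasi-isometry is realized by the orbit map $g\mapsto A_X^{\,g}$ based at a fixed proper irreducible standard parabolic $A_X$, and I would check four inputs. First, $\mathcal C_{parab}(\Gamma)$ is connected, which is Theorem~\ref{T:TildeA}(i). Second, the conjugation action of $A_\Gamma$ has finitely many orbits of vertices, since by Lemma~\ref{L:ConjugacyParabolic} every vertex is conjugate to one of the finitely many proper irreducible standard parabolic subgroups. Third, $X_{NP}(\Gamma)$ generates $A_\Gamma$ and each generator displaces the base vertex a bounded amount: every standard generator lies in the normalizer of the cyclic standard parabolic it generates, so $X_{NP}(\Gamma)$ contains all standard generators, while any element of a normalizer $N(A_Y)$ fixes the vertex $A_Y$, which sits at bounded distance from $A_X$. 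Fourth --- the only genuinely new ingredient --- the action has finitely many orbits of \emph{edges}; this is precisely the simultaneous standardization of adjacent parabolic subgroups established in Propositions~\ref{P:EdgeOrbitTildeA} and~\ref{P:EdgeOrbitTildeC}, since every edge can be conjugated so that both endpoints become standard and there are only finitely many adjacent pairs of standard parabolics.

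With these inputs the two quasi-isometry inequalities follow as in \cite{CalvezWiest}. For the coarse upper bound, a generator $s\in N(A_Y)$ satisfies $A_Y^{\,s}=A_Y$, whence $d_\Gamma(A_X,A_X^{\,s})\leqslant 2\max_Y d_\Gamma(A_X,A_Y)$; processing a word of $X_{NP}$-length $\ell$ one step at a time and using the $A_\Gamma$-invariance of $d_\Gamma$ shows the orbit map is coarsely Lipschitz. For the coarse lower bound I would take a geodesic $A_X=R_0,R_1,\dots,R_m=A_X^{\,g}$ in $\mathcal C_{parab}(\Gamma)$ and, exploiting the finiteness of edge orbits, replace the transition from $R_i$ to $R_{i+1}$ by a word in $X_{NP}(\Gamma)$ of uniformly bounded length (the element effecting the transition being controlled, modulo a fixed representative of the edge orbit, by the normalizers of the two standard endpoints), so that summing yields $d_{X_{NP}}(1,g)\leqslant C\,d_\Gamma(A_X,A_X^{\,g})+C'$. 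Coarse surjectivity is then immediate from the finiteness of vertex orbits.

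The main obstacle is not the final assembly, which is formal, but confirming that the spherical-type argument of \cite{CalvezWiest} applies unchanged. That argument rests on the identification of a parabolic subgroup with its central element and on the reduction of adjacency to commutation of central elements; in the euclidean setting these are supplied by Propositions~\ref{P:CenterATilde} and~\ref{P:AdjATilde2} for $\widetilde A_n$, and by Propositions~\ref{P:WellDefTildeC} and~\ref{P:AdjTildeC} for $\widetilde C_n$, together with the finiteness of edge orbits above. As none of the remaining steps use a Garside structure or the finiteness of $W_\Gamma$, the argument transfers, and the hyperbolicity of $\mathcal C_{parab}(\Gamma)$ then yields that $X_{NP}(\Gamma)$ is a hyperbolic structure on $A_\Gamma$.
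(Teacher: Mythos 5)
Your proposal is correct and takes essentially the same route as the paper: connectivity from Theorem~\ref{T:TildeA}(i), finitely many vertex orbits via standard parabolic representatives, finitely many edge orbits via the simultaneous standardization Propositions~\ref{P:EdgeOrbitTildeA} and~\ref{P:EdgeOrbitTildeC}, the quasi-isometry from \cite[Lemma~2.5]{CalvezWiest}, and hyperbolicity from Theorem~\ref{T:TildeA}(iv). The only difference is that you unfold the orbit-map estimates that the paper delegates entirely to the cited lemma.
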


\begin{proof}
By Theorem \ref{T:TildeA}(i), $\mathcal C_{parab}(\Gamma)$ is connected; the proper irreducible standard parabolic subgroups form a finite set representing  all the orbits of vertices under the natural action of $A_{\Gamma}$ and by Propositions \ref{P:EdgeOrbitTildeA} and \ref{P:EdgeOrbitTildeC}, the finite set consisting of all edges bounded by two standard parabolic subgroups
is a set of representatives of the orbits of edges under the action of  $A_{\Gamma}$. By \cite[Lemma~2.5]{CalvezWiest}, this implies that $\mathcal C_{parab}(\Gamma)$ and $Cay(A_{\Gamma}, X_{NP}(\Gamma))$ 
are  quasi-isometric. By Theorem \ref{T:TildeA}(iv), $X_{NP}(\Gamma)$ is a hyperbolic structure on $A_{\Gamma}$. 
\end{proof}

Now, we comment on the last generating set considered above: $X_{abs}(\Gamma)$ is the set of \emph{absorbable}e elements. The concept of an absorbable element was introduced in the context of a finite-type Garside group \cite{CalvezWiest1}. The only irreducible Artin-Tits groups which are finite-type Garside groups are those of spherical type \cite{BS}. However, the euclidean Artin-Tits groups under study in this paper possess a Garside structure, of \emph{infinite} type --see \cite{Digne1,Digne2}. Moreover, McCammond and his collaborators have generalized this situation: although no other euclidean Artin-Tits group than $A_{\widetilde A_n}$ and $A_{\widetilde C_n}$ (and $A_{\widetilde G_2}$) has such a Garside structure, they show that every euclidean Artin-Tits group embeds in a group with an infinite-type Garside structure, the so-called \emph{crystallographic Garside group} \cite{McCS}. 
This raises the following question. 
\begin{problem}\rm 
Is it possible to generalize to an infinite-type Garside structure the construction of the absorbable elements and the additional length graph from \cite{CalvezWiest1}? Does this yield a hyperbolic graph? If yes, can this be used to show that euclidean Artin-Tits groups are acylindrically hyperbolic, as in \cite{CalvezWiest2}? \end{problem} 

{\bf{Acknowledgements}} 
This work builds on an early draft by the first author; he thanks Juan Gonz\'alez-Meneses for pointing a mistake in that preliminary version. Most of the work was done during a visit of the second author at Universidad de Valpara\'iso supported by FONDECYT Regular 1180335. The first author is supported by FONDECYT Regular 1180335, MTM2016-76453-C2-1-P and FEDER. Heartfelt thanks to Bert Wiest for bringing the reference \cite{vokes} to our attention.

\end{document}